\documentclass[10pt,dvipsnames]{article}

\usepackage{amsmath}
\usepackage{amssymb}
\usepackage{amsthm}
\usepackage{bm}
\usepackage[backend=biber,bibencoding=utf8,bibstyle=alphabetic,citestyle=alphabetic,sorting=nyt,maxnames=4]{biblatex}
	\AtEveryBibitem{\clearfield{doi}}
	\AtEveryBibitem{\clearfield{isbn}}
	\AtEveryBibitem{\clearfield{issn}}
	\AtEveryBibitem{\clearfield{url}}
	
	\renewbibmacro{in:}{\ifentrytype{article}{}{\printtext{\bibstring{in}\intitlepunct}}}
	\AtEveryBibitem{\ifentrytype{book}{\clearfield{pages}}{}}
\usepackage[colorlinks=true,allcolors=purple]{hyperref}
\usepackage{cleveref}
	\crefformat{equation}{\textup{#2(#1)#3}}
\usepackage{dsfont}
\usepackage{enumitem}
\usepackage[OT2,OT1]{fontenc}
\usepackage[para]{footmisc}
\usepackage[cal=cm,scr=boondoxo]{mathalfa}
\usepackage{pifont}
\usepackage{refcount}
\usepackage{stmaryrd}
\usepackage{textcomp}
\usepackage{tikz}
	\usetikzlibrary{arrows}
	\usetikzlibrary{patterns}
	\usetikzlibrary{angles}
\usepackage{tikz-cd}
\usepackage[textwidth=3cm,colorinlistoftodos]{todonotes}
\usepackage{xfrac}

\usepackage{geometry}
\numberwithin{equation}{section}			
\swapnumbers						

\newcounter{Enum}				
\newenvironment{Enumerate}{\begin{enumerate}[label={\rm({\roman*})}]}{\end{enumerate}}

\newcommand{\descriptionlabelsave}{}		
\newenvironment{Itemize}{%
	\renewcommand{\descriptionlabelsave}{\descriptionlabel}\renewcommand{\descriptionlabel}{$\triangleright$}%
	\begin{description}[leftmargin=15pt,itemindent=-5.2pt]}{%
	\end{description}\renewcommand{\descriptionlabel}{\descriptionlabelsave}}

\newcounter{StepsCount}				
\newenvironment{Steps}{%
	\begin{list}{{\sf Step}\ \ding{\value{StepsCount}}\,:}{%
	\usecounter{StepsCount} \leftmargin=0pt \labelwidth=12pt \itemindent=\labelwidth%
	\itemsep=5pt\listparindent=\parindent} \setcounter{StepsCount}{191}}{\end{list}}
\newcounter{StepsRefCount}

\newcounter{ElistCount}				
\newenvironment{Elist}{%
	\begin{list}{{\rm(\roman{ElistCount})}}{%
	\usecounter{ElistCount} \leftmargin=0pt \labelwidth=0pt \itemindent=6pt%
	\itemsep=5pt\listparindent=\parindent} \setcounter{ElistCount}{0}}{\end{list}}
\newcommand{\Elistref}[1]{(\romannumeral\getrefnumber{#1})}

\newenvironment{Ilist}{
	\begin{list}{$\triangleright$}{\leftmargin=0pt \labelwidth=13pt \itemindent=\labelwidth%
	\itemsep=5pt\listparindent=\parindent}}{\end{list}}

\newenvironment{IIlist}{
	\begin{list}{$\triangleright\triangleright$}{\leftmargin=0pt \labelwidth=17pt \itemindent=\labelwidth%
	\itemsep=5pt\listparindent=\parindent}}{\end{list}}

\theoremstyle{plain}
	\newtheorem{lemma}{Lemma}[section]
	\newtheorem{proposition}[lemma]{Proposition}
	\newtheorem{theorem}[lemma]{Theorem}
	\newtheorem{corollary}[lemma]{Corollary}
\theoremstyle{definition}
	\newtheorem{definition}[lemma]{Definition}
\theoremstyle{remark}
	\newtheorem{remark}[lemma]{Remark}
	\newtheorem{example}[lemma]{Example}

\newenvironment{Proposition}{\begin{proposition}}{\par\noindent\rule{5em}{1pt}\end{proposition}}
\newenvironment{Theorem}{\begin{theorem}}{\par\noindent\rule{5em}{1pt}\end{theorem}}

\newenvironment{Definition}{\begin{definition}}{\hfill$\blacktriangleleft$\end{definition}}
\newenvironment{Remark}{\begin{remark}}{\hfill$\vartriangleleft$\end{remark}}
\newenvironment{Example}{\begin{example}}{\hfill$\vartriangleleft$\end{example}}

\newcommand{\mc}[1]{{\mathcal{#1}}}			
\newcommand{\bb}[1]{{\mathbb{#1}}}			
\newcommand{\ov}{\overline}				
\newcommand{\mr}{\mathring}				

\newcommand{\Dis}[1]{${\displaystyle{#1}}$}		
\DeclareMathOperator{\ran}{ran}				
\newcommand{\Dummy}{\text{\textvisiblespace\kern1pt}}	
\DeclareMathOperator{\Id}{id}				
\DeclareMathOperator{\Span}{span}			

\newcommand{\DS}{\mid\mkern3mu}				
\newcommand{\DSb}{\mkern4.5mu\big|\mkern7.5mu}		
\newcommand{\DSB}{\mkern4.5mu\Big|\mkern7.5mu}		
\newcommand{\DQ}{\mkern6mu}				
\newcommand{\DP}{{\mathop:\kern5pt}}			
\newcommand{\DF}{\colon}				
\newcommand{\DE}{\mathrel{\mathop:}=}			
\newcommand{\ED}{=\mathrel{\mathop:}}			
\newcommand{\DI}{\mathrel{\mathop:}\Leftrightarrow}	
\newcommand{\CAS}{&\text{if}\ }				
\newcommand{\CASO}{&\text{otherwise}}			

\newcommand{\Function}[5][2mm]{%
	\DF\left\{
	\begin{array}{rcl} 
		{#2} & \to & {#3} 
		\\[#1]
		{#4} & \mapsto & {#5} 
	\end{array}
	\right.
	}

\DeclareMathOperator{\HomCSL}{Hom_{\rm CSL}}		
\DeclareMathOperator{\HomCA}{Hom_{\rm CA}}		
\DeclareMathOperator{\HomSL}{Hom_{\rm SL}}		
\DeclareMathOperator{\Conv}{co}				
\DeclareMathOperator{\Cone}{cone}			
\DeclareMathOperator{\Edim}{e-dim}			
\DeclareMathOperator{\Gdim}{g-dim}			
\newcommand{\E}[1][]{\mr{e}^{#1}}			
\DeclareMathOperator{\Fin}{fin}				

\NewDocumentCommand{\Moewe}{ O{14} O{8} O{8.33} m }{%
	{#4}%
	\mkern-#1mu\raisebox{#3pt}{%
	\begin{tikzpicture}[scale=0.0833]
		\draw (0,0) arc (10:135:1);
		\draw (0,0) arc (170:45:1);
	\end{tikzpicture}
}\mkern-#2mu}

\begin{document}

\begin{flushleft}
	{\Large\bf Homomorphisms and subalgebras of free\\[2mm] convex semilattices}
	\\[5mm]
	\textsc{%
	Harald Woracek
	}
	\\[6mm]
	{\small
	\textbf{Abstract:}
		A convex semilattices is a convex algebra endowed with an additional semilattice operation that satisfies
		a distributive law. 
		We investigate free convex semilattices by studying the homomorphisms into products $\bb R^\Omega$
		and, more specifically, the homomorphisms between two free finitely generated convex semilattices. 
		We characterise the finitely generated subalgebras of a free finitely generated convex semilattice.
	\\[3mm]
	{\bf AMS MSC 2010:} 06\,F\,99,\ 52\,A\,01,\ 08\,B\,20
	\\
	{\bf Keywords:} convex semilattice, free algebras, homomorphisms, subalgebras, exact algebras
	}
\end{flushleft}


%
%
%
\section{Introduction}

Convex semilattices are an equational class of algebras that combine convex algebras and a semilattice operation by imposing a distributive 
law (all definitions will be recalled in Section~2.1). Besides their intrinsic algebraic relevance, on which we shall elaborate
below, convex semilattices have recently been used to understand the interplay of probabilistic phenomena and nondeterminism in the
analysis of automata \cite{bonchi.sokolova.vignudelli:2022}.

Given any algebra satisfying suitable laws one can consider the algebra of its subalgebras \cite{adaricheva.pilitowska.stanovski:2008}. This
algebra naturally carries a semilattice operation assigning to two subalgebras the subalgebra generated by their union. If one has
started from a convex algebra one will in this way obtain a convex semilattice. This construction not only produces some convex
semilattices, but 
is intrinsically related to that equational class: the algebra of all finitely generated subalgebras of the free convex algebra 
$\mc DA$ with basis $A$ is the free convex semilattice $\mc CA$ with basis $A$. A high-level (category theoretic) reason for this is
that convex semilattices occur from the composition of the ``finitely supported distributions''-monad with the ``finitely generated
subalgebras''-monad, 

In the present paper we investigate the structure of free convex semilattices by studying the homomorphisms of $\mc CA$ into some
product $\bb R^\Omega$ being a convex semilattice in a natural way. More specifically, we study homomorphisms between two free finitely
generated convex semilattices. Furthermore, we characterise finitely generated convex semilattices that can be embedded into a
free finitely generated one. Such algebras are sometimes called exact \cite{agliano.ugolini:2024,flaminio.ugolini:2026-arXiv}. 

Let us briefly describe the core content of the present paper.
We associate with a homomorphism $\Phi$ of $\mc CA$ into $\bb R^\Omega$ a subset 
$H(\Phi)$ of $\bb R^A$, and show that the kernel of $\Phi$ is characterised by this set, cf.\ \Cref{S3}. 
One important feature is that the set $H(\Phi)$ is defined using solely the action of $\Phi$ on the free generators of $\mc CA$.
This theorem can be seen as an indirect description of a part of the congruence lattice of $\mc CA$. 
We note that, while the congruence lattice of the free convex
algebra $\mc DA$ can be described in a complicated but explicit manner \cite{sokolova.woracek:pcacon}, an explicit description of
the full congruence lattice of $\mc CA$ is not known and currently seems out of reach. 
For finite $A$ we single out those subsets $H(\Phi)$ that arise from
homomorphisms into a free finitely generated convex semilattice, cf.\ \Cref{S24}. This description is explicit and achieved by a
number $\delta(H(\Phi))$ which quantifies a geometric property of the set $H(\Phi)$ as a subset of the euclidean space $\bb R^A$. 
Having at hand this computable quantity one can investigate exact convex semilattices. We determine whether a convex
semilattice is exact, and if it is give a formula for the minimal number $n$ such that it embeds into $\mc Cn$, cf.\ \Cref{S35}. 
We call this minimum the embedding dimension of the convex semilattice. 

One application of the formula given in \Cref{S35} is the following. 
Assume we have $n_1,\ldots,n_l\in\bb N\setminus\{0\}$ where $n_i\geq 3$ for at least one $i$. Then the product 
$\mc Cn_1\times\ldots\times\mc Cn_l$ is exact and the embedding dimension equals $\max\{n_1,\ldots,n_l\}$, cf.\ \Cref{S45}. 
Curiously, yet for a good reason, the product $\mc C2\times\mc C2$ is exact with embedding dimension $3$. 

The structuring of the paper follows the outline of its content given above. In order to make the presentation as self-contained as
possible, we include a list of general notation and conventions at the end of this introduction, and collect relevant definitions,
examples, and basic facts in a preliminary section. In Section~3.1 we give the definition of the set $H(\Phi)$ and provide some
properties. Section~3.2 contains \Cref{S3} and its proof, and in Section~3.3 we give a practical extension of the theorem to a more
flexible setting, cf.\ \Cref{S30}. 
Section~4 is devoted to \Cref{S24} and its proof. This is the technically most involved part of the paper. In
Section~5.1 we discuss the embedding dimension, and show \Cref{S35}. This theorem is an important assertion, but appears as
a fairly simple consequence of the two
previous theorems. In Section~5.2 we consider another natural notion of ``dimension'' of an algebra, namely, the mimimal cardinality
of a subset that generates the algebra. Interestingly, this number is related to the embedding dimension in a neat way, cf.\
\Cref{S51}. Finally, in Section~5.3, we discuss the embedding dimension in the equational class of convex algebras. This consideration
is included because it illustrates that free convex semilattices have a rather complicated structure compared to free convex algebras.

Last but not least let us add a few words about the generality of our setting. The high-level categorical interpretation mentioned 
above generalises
to other algebras than convex algebras, namely such whose operations satisfy entropic and idempotent laws, so-called modes, e.g.\
\cite{romanowska.smith:2002}. The construction of the algebra of subalgebras then leads to so-called modals, e.g.\
\cite{romanowska.smith:1985,smith:2011}. The category theoretic interpretation of these algebras as the Eilenberg-Moore algebras of the
composition of two monads is available \cite{kurpiel:1987}. 
On stark contrast the results of the present paper are most likely specific for convex algebras and convex semilattices. The reason
being that the geometry and topology of the euclidean space $\bb R^n$ (or, more generally, products $\bb R^\Omega$) is eminently important
for nearly all the proofs. 

\medskip\noindent
\textbf{Acknowledgement:}
The question whether a product of free convex semilattices can be embedded into some free convex semilattice was the starting
point of the present research. It came up in discussions with S.Milius and L.Schr\"oder during a visit to the FAU Erlangen whom we thank
for their hospitality.

\subsection{Notation and conventions}
\label{S79}

We list some notation and conventions that are used throughout the paper. We will give a reference to this list at the first
occurrance in the text. After that all what is listed here will be used without further notice.
\begin{Elist}
\item 
\label{S85}
	Whenever we speak of a linear space, we mean a linear space over the scalar field $\bb R$.
	If $M$ is a subset of a linear space, we denote by $\Span M$ the linear subspace generated by $M$, and by $\Cone M$ the cone
	generated by $M$.
\item Elements of a product $X^Y$ will interchangably be considered as tuples $(x_y)_{y\in Y}$ or as functions $f\DF Y\to X$.
\item
\label{S17}
	If $A$ is a nonempty set and $b\in A$, then $e_b$ denotes 
	\[
		e_b\DE(\delta_{a,b})_{a\in A}\in\bb R^A,
	\]
	where $\delta_{a,b}$ is the Kronecker-Delta.
	Moreover, we set 
	\[
		\E[A]\DE(1)_{a\in A}\in\bb R^A.
	\]
\item 
\label{S87}
	Using $A\DE\{1,\ldots,n\}$ and $j\in\{1,\ldots,n\}$ in the previous item, we in particular have that $e_j$ denotes the
	$j$-th canonical basis vector of $\bb R^n$ (independently of the dimension $n$), and that $\E[n]=\sum_{j=1}^ne_j$.
\item 
\label{S22}
	Denote 
	\[
		\bb R^A_{\Fin}\DE
		\big\{(\alpha_a)_{a\in A}\in\bb R^A\DSb \alpha_a=0\text{ for all but finitely many }a\in A\big\},
	\]
	and let $\llceil\Dummy,\Dummy\rrfloor\DF\bb R^A\times\bb R^A_{\Fin}\to\bb R$ be the bilinear form defined as 
	\[
		\llceil (\alpha_a)_{a\in A},(\beta_a)_{a\in A}\rrfloor\DE
		\sum_{\substack{a\in A\\ \beta_a\neq 0}}\alpha_a\beta_a
		\qquad\text{for }(\alpha_a)_{a\in A}\in\bb R^A,(\beta_a)_{a\in A}\in\bb R^A_{\Fin}.
	\]
\item 
	Using $A\DE\{1,\ldots,n\}$ in the previous item, the bilinear form $\llceil\Dummy,\Dummy\rrfloor$ reduces to the Euclidean 
	scalar product on $\bb R^n$, which we denote as
	\[
		\big((\alpha_j)_{j=1}^n,(\beta_j)_{j=1}^n\big)\DE\sum_{j=1}^n\alpha_j\beta_j
		\qquad\text{for }(\alpha_j)_{j=1}^n,(\beta_j)_{j=1}^n\in\bb R^n.
	\]
\item 
\label{S82}
	Unless stated differently, all topological notions in $\bb R^A$ refer to the product topology of the Euclidean topology on the
	factors $\bb R$. Note that this is the weak topology on $\bb R^A$ induced by $\bb R^A_{\Fin}$ via the bilinear form
	$\llceil\Dummy,\Dummy\rrfloor$. 
\item 
\label{S86}
	For a set $A$ denote by $\mc P(A)$ its powerset and 
	\[
		\mc P_{\Fin}(A)\DE\big\{B\in\mc P(A)\DSb B\text{ finite}\big\}.
	\]
\item 
\label{S38}
	For $n\in\bb N$ denote the \emph{unit sphere} in $\bb R^{n+1}$ as 
	\[
		S^n\DE\Big\{(\alpha_j)_{j=1}^{n+1}\in\bb R^{n+1}\DSB \sum_{j=1}^{n+1}\alpha_j^2=1\Big\}.
	\]
\item 
\label{S81}
	Let $L,L'$ be two linear spaces and $\phi\DF L\to L'$. 
	Then $\phi$ is called \emph{positively homogeneous}, if 
	\[
		\forall x\in L,\alpha\geq 0\DP \phi(\alpha x)=\alpha\phi(x).
	\]
	Every positively homogeneous function $\phi$ defined on $\bb R^{n+1}$ is determined by its values on $S^n$.
\item 
\label{S59}
	Let $L,L'$ be two linear spaces and $\phi\DF L\to L'$. Then $\phi$ is called \emph{piecewise linear}, if there exist finitely 
	many finitely generated cones $C_1,\ldots,C_N\subseteq L$ and linear maps $\xi_1,\ldots,\xi_N\DF L\to L'$ such that 
	\[
		\bigcup_{i=1}^N C_i=L
		\quad\text{and}\quad
		\forall i\in\{1,\ldots,N\}\DP \phi|_{C_i}=\xi_i|_{C_i}.
	\]
	Every piecewise linear function is positively homogeneous. 
\end{Elist}

\section{Preliminaries}

In this section we introduce and discuss the main objects of our present study.

\subsection{The equational class of convex semilattices}

Convex semilattices are obtained
by combining the algebraic structures of a convex algebra and a semilattice with an appropriate distributive law, and for 
the sake of completeness we recall also those.

\begin{Definition}
\label{S60}
	A \emph{convex algebra} is a set $X$ together with a family $(+_p)_{p\in[0,1]}$ of binary operations on $X$ that 
	satisfy
	\begin{Ilist}
	\item[] \Dis{\forall x\in X,p\in(0,1)\DP x+_px=x}
		\hspace*{25mm}(\emph{idempotence})
	\item[] \Dis{\forall x,y\in X,p\in(0,1)\DP x+_py=y+_{1-p}x}
		\hspace*{8.5mm}(\emph{parametric commutativity})
	\item[] \Dis{\forall x,y,z\in X,p,q\in(0,1)\DP (x+_py)+_qz=x+_{pq}\Big(y+_{\frac{(1-p)q}{1-pq}}z\Big)}
		\\[1mm] \hspace*{84mm}(\emph{parametric associativity})
	\item[] \Dis{\forall x,y\in X\DP x+_1y=x\wedge x+_0y=y}
		\hspace*{15mm}(\emph{projection axiom})
	\end{Ilist}
	We write a convex algebra as $\langle X,+_p\rangle$.

	The set of all homomorphisms of a convex algebra $X$ into another one $Y$ is denoted as $\HomCA(X,Y)$.
\end{Definition}

\begin{Definition}
\label{S61}
	A \emph{semilattice} is a set $X$ together with a binary operation $\oplus$ on $X$ that satisfies 
	\begin{Ilist}
	\item[] \Dis{\forall x\in X\DP x\oplus x=x}
		\hspace*{46mm}(\emph{idempotence})
	\item[] \Dis{\forall x,y\in X\DP x\oplus y=y\oplus x}
		\hspace*{35mm}(\emph{commutativity})
	\item[] \Dis{\forall x,y,z\in X\DP (x\oplus y)\oplus z=x\oplus(y\oplus z)}
		\hspace*{10mm}(\emph{associativity})
	\end{Ilist}
	We write a semilattice as $\langle X,\oplus\rangle$.

	The set of all homomorphisms of a semilattice $X$ into another one $Y$ is denoted as $\HomSL(X,Y)$.
\end{Definition}

\noindent
Now we combine these two structures.

\begin{Definition}
\label{S62}
	A \emph{convex semilattice} is a set $X$ together with binary operations $(+_p)_{p\in[0,1]}$ and $\oplus$ on $X$
	that satisfy
	\begin{Enumerate}
	\item $\langle X,+_p\rangle$ is a convex algebra,
	\item $\langle X,\oplus\rangle$ is a semilattice,
	\item Each $+_p$ is \emph{distributive over} $\oplus$, i.e., 
		\[
			\forall x,y,z\in X,p\in[0,1]\DP (x\oplus y)+_pz=(x+_pz)\oplus(y+_pz)
		\]
	\end{Enumerate}
	We write a convex semilattice as $\bb X=\langle X,+_p,\oplus\rangle$.

	The set of all homomorphisms of a convex semilattice $\bb X$ into another one $\bb Y$ is denoted as $\HomCSL(\bb X,\bb Y)$.
\end{Definition}

\noindent
Let us comment on these definitions.

\begin{Remark}
\label{S78}
	\phantom{}
	\begin{Elist}
	\item In the literature there are also other commonly used names for convex algebras, for example 
		convex spaces \cite{swirszcz:1974,flood:1981} or barycentric algebras \cite{stone:1949,komorowski.romanowska.smith:2019}.
		Equally, there exist different equivalent axiomatisations and we mention two.
		\begin{Ilist}
		\item One can define a ``convex algebra'' using only operations $+_p$ with $p\in(0,1)$, and require idempotence,
			parametric commutativity, and parametric associativity. 
			Then one can define operations $+_1$ and $+_0$ by the
			formula in the projection axiom, and obtain a convex algebra in the sense of \Cref{S60}.
		\item One can define a ``convex algebra'' using a family of operations 
			\[
				\Big\{+_{(p_i)_{i=1}^n}\DSB n\in\bb N\setminus\{0\},p_1,\ldots,p_n\geq 0,\sum_{i=1}^np_i=1\Big\},
			\]
			where $+_{(p_i)_{i=1}^n}$ is an $n$-ary operation, and require a general 
			\emph{projection axiom} and a \emph{barycentric axiom}, as in
			 \cite[Definition~3.1]{sokolova.woracek:pcacon}. 
			The connection with \Cref{S60} is made as follows. Given operations 
			$+_{(p_i)_{i=1}^n}$, one defines $+_p\DE+_{(p_i)_{i=1}^n}$ where $n\DE 2$, $p_1\DE p$, $p_2\DE 1-p$.
			Conversely, given $+_p$, one defines $+_{(p_i)_{i=1}^n}$ recursively by 
			\begin{align*}
				& +_{(p_i)_{i=1}^1}(x_1)\DE x_1,
				\\
				& +_{(p_i)_{i=1}^2}(x_1,x_2)\DE x_1+_{p_1}x_2,
				\\
				& +_{(p_i)_{i=1}^{n+1}}(x_1,\ldots,x_{n+1})\DE
				\begin{cases}
					x_1+_{p_1}\big(+_{(\frac{p_i}{1-p_1})_{i=2}^{n+1}}(x_2,\ldots,x_{n+1})\big)
					\CAS p_1\neq 1,
					\\
					x_1 \CAS p_1=1.
				\end{cases}
			\end{align*}
			Instead of writing $+_{(p_i)_{i=1}^n}$ it is customary to use the notation 
			\[
				+_{(p_i)_{i=1}^n}(x_1,\ldots,x_n)\ED\sum_{i=1}^np_ix_i.
			\]
		\end{Ilist}
	\item Using the projection axiom it follows that in every convex algebra the idempotence and parametric commutativity
		laws hold for all $p\in[0,1]$ and that the parametric associativity law holds for all 
		$(p,q)\in[0,1]^2\setminus\{(1,1)\}$. 
	\item A semilattice can also be defined as a partially ordered set $\langle X,\leq\rangle$ in which each two elements
		have a supremum. The connection is given by 
		\[
			x\leq y \DI x\oplus y=y\quad\text{and}\quad x\oplus y\DE\sup\{x,y\},
		\]
		respectively.
	\item In the definition of a convex semilattice one could equivalently require the distributive law only for 
		$p\in(0,1)$. For $p=0$ and $p=1$ it is a consequence of the projection axiom for $+_p$ and idempotence of $\oplus$. 
	\item In a convex semilattice the operation $\oplus$ is not necessarily distributive over $+_p$.
		In fact, in those convex semilattices that we deal with in the present paper
		this other distributive law does not hold.
	\end{Elist}
\end{Remark}

\begin{Example}
\label{S103}
	We define binary operations $+_p$ and $\oplus$ on $\bb R$ as 
	\begin{Ilist}
	\item[] \Dis{x+_py\DE px+(1-p)y}\qquad for $x,y\in\bb R$, $p\in(0,1)$,
	\item[] \Dis{x\oplus y\DE\max\{x,y\}}\qquad for $x,y\in\bb R$.
	\end{Ilist}
	Then $\langle\bb R,+_p,\oplus\rangle$ is a convex semilattice.
	We will always use these operations on $\bb R$ when speaking of $\bb R$ as a convex semilattice (or convex algebra or
	semilattice).

	We observe that translations
	\[
		T_a\Function{\bb R}{\bb R}{x}{x+a}\quad\text{where $a\in\bb R$},
	\]
	and rescalings 
	\[
		M_\lambda\Function{\bb R}{\bb R}{x}{\lambda x}\quad\text{where $\lambda>0$},
	\]
	are automorphisms of $\langle\bb R,+_p,\oplus\rangle$. 
	Furthermore, a subset $Y\subseteq\bb R$ is a subalgebra of the convex semilattice $\langle\bb R,+_p,\oplus\rangle$,
	if and only if $Y$ is an interval. 
\end{Example}

\noindent
We present two more standard examples of convex semilattices.
The first is simply powers of $\langle\bb R,+_p,\oplus\rangle$.

\begin{Example}
\label{S63}
	Let $\Omega\neq\emptyset$. The algebra $\langle\bb R,+_p,\oplus\rangle^\Omega$ has carrier set 
	$\bb R^\Omega$ and operations acting as
	\begin{Ilist}
	\item[]	\Dis{(f+_pg)(\omega)\DE pf(\omega)+(1-p)g(\omega)}\qquad for $x,y\in\bb R$, $p\in(0,1)$, $\omega\in\Omega$,
	\item[] \Dis{(f\oplus g)(\omega)\DE\max\{f(\omega),g(\omega)\}}\qquad for $x,y\in\bb R$, $\omega\in\Omega$.
	\end{Ilist}
	Due to this pointwise nature of the operations many properties of $\langle\bb R,+_p,\oplus\rangle$ are inherited. 
	For instance, all translations $T_a\DF f\mapsto f+a$ where $a\in\bb R^\Omega$ and rescalings $M_\lambda\DF f\mapsto\lambda f$ 
	where $\lambda>0$ are automorphisms.

	Clearly, each product of intervals is a subalgebra of $\langle\bb R^\Omega,+_p,\oplus\rangle$. 
	Contrasting the one-dimensional case, the subalgebra lattice of $\langle\bb R^\Omega,+_p,\oplus\rangle$ contains many more sets
	than boxes. For example, let $y=(y_\omega)_{\omega\in\Omega}\in\bb R^\Omega_{\Fin}$, and consider the 
	half-space\footnote{\Cref{S79}\Elistref{S22}.}  
	\[
		H_y\DE\big\{x\in\bb R^\Omega\DSb \llceil x,y\rrfloor\leq 0\big\}.
	\]
	Then $H_y$ is a subalgebra of the convex semilattice $\langle\bb R^\Omega,+_p,\oplus\rangle$, if and only if there exists at
	most one $\omega\in\Omega$ such that $y_\omega>0$. To prove this, assume first we have $\omega_1,\omega_2\in\Omega$ with 
	$\omega_1\neq\omega_2$ and $y_{\omega_1},y_{\omega_2}>0$. Set $x_1\DE 0$ and $x_2\DE(\xi_\omega)_{\omega\in\Omega}$ where 
	\[
		\xi_\omega\DE
		\begin{cases}
			-y_{\omega_2} \CAS \omega=\omega_1
			\\
			y_{\omega_1} \CAS \omega=\omega_2
			\\
			0 \CASO
		\end{cases}
	\]
	Then $x_1,x_2\in H_y$ but $x_1\oplus x_2\notin H_y$. Now assume that there exists at most one $\omega\in\Omega$ with 
	$y_\omega>0$. Let $\omega_0$ be the element with $y_{\omega_0}>0$ if one such exists, and pick $\omega_0$ arbitrary otherwise.
	It is clear that $H_y$ is convex. Let $x_1=(\xi_{1,\omega})_{\omega\in\Omega},x_2=(\xi_{2,\omega})_{\omega\in\Omega}\in H_y$,
	and assume w.l.o.g.\ that $\xi_{1,\omega_0}\leq\xi_{2,\omega_0}$. Then 
	\[
		\llceil x_1\oplus x_2,y\rrfloor=\sum_{\omega\in\Omega}\max\{\xi_{1,\omega},\xi_{2,\omega}\}\cdot y_\omega
		\leq\sum_{\omega\in\Omega}\xi_{2,\omega}\cdot y_\omega\leq 0.
	\]
	This example of subalgebras easily expands by applying translations. This leads to half-spaces 
	$H_{y,\gamma}\DE\{x\in\bb R^\Omega\DS \llceil x,y\rrfloor\leq\gamma\}$ where $\gamma\in\bb R$ and $y$ satisfies the stated
	condition.
\end{Example}

\noindent
To formulate the second example, we use the following notation: if $\langle X,+_p\rangle$ is a convex algebra and 
$M\subseteq X$, we denote the subalgebra of $X$ generated by $M$ as $\Conv M$ and speak of the \emph{convex hull} of $M$.
Explicitly, $\Conv M$ is given as 
\[
	\Conv M=\Big\{\sum_{i=1}^np_ix_i\DSB 
	n\in\bb N\setminus\{0\},p_1,\ldots,p_n\geq 0,\sum_{i=1}^np_i=1,x_1,\ldots,x_n\in M\Big\}.
\]

\begin{Example}
\label{S64}
	Let $\langle X,+_p\rangle$ be a convex algebra. We define\footnote{\Cref{S79}\Elistref{S86}}
	\[
		\Moewe X\DE\big\{\Conv M\DSb M\in\mc P_{\Fin}(X)\setminus\{\emptyset\}\big\},
	\]
	and endow $\Moewe X$ with the binary operations 
	\begin{Ilist}
	\item[] \Dis{A+_pB\DE\big\{a+_pb\DS a\in A,b\in B\}}\qquad for $A,B\in\Moewe X$, $p\in[0,1]$,
	\item[] \Dis{A\oplus B\DE\Conv(A\cup B)}\qquad for $A,B\in\Moewe X$. 
	\end{Ilist}
	Then $\langle\Moewe X,+_p,\oplus\rangle$ is a convex semilattice. 

	Explicitly, the operation $\oplus$ is given as 
	\[
		A\oplus B=\big\{a+_pb\DS a\in A,b\in B,p\in[0,1]\big\}=\bigcup_{p\in[0,1]}(A+_pB).
	\]
	The construction of $\langle\Moewe X,+_p,\oplus\rangle$ establishes a functor from convex algebras to convex
	semilattices in the obvious way: given convex algebras $\langle X,+_p\rangle$ and $\langle X',+_p'\rangle$ and 
	$\varphi\in\HomCA(X,X')$, define 
	\[
		\Moewe[12][8][5.9]\varphi\DF\left\{
		\begin{array}{rcl}
			\Moewe X & \to & \Moewe[17][8][8.33]{X'}
			\\
			A & \mapsto & \varphi(A)
		\end{array}
		\right.
	\]
\end{Example}

\begin{Remark}
\label{S96}
	Let $\bb X$ be a convex semilattice and $M\subseteq X$ a nonempty subset. Then the subalgebra of $\bb X$ generated by $M$ 
	is given as 
	\[
		\Big\{x_1\oplus\ldots\oplus x_N\DSB N\in\bb N,\ x_1,\ldots,x_N\in\Conv M\Big\}.
	\]
	This follows since this set is a subalgebra due to the distributive law.
\end{Remark}

\noindent
The free algebras in the varieties of convex algebras, semilattices, and convex semilattices are well-known, and we recall these
results.

\begin{Proposition}
\label{S13}
	Let $A$ be a nonempty set.
	\begin{Enumerate}
	\item The free convex algebra with basis $A$, we denote it as $\mc DA$, has carrier set 
		\[
			\mc DA\DE \Big\{(\alpha_a)_{a\in A}\in\bb R^A_{\Fin}\DSb 
			\forall a\in A\DP\alpha_a\geq 0,\ \sum_{a\in A}\alpha_a=1\Big\}
		\]
		and convex operations given by linear combinations in $\bb R^A$, i.e., 
		\[
			(\alpha_a)_{a\in A}+_p(\beta_a)_{a\in A}\DE\big(p\alpha_a+(1-p)\beta_a\big)_{a\in A}.
		\]
		Here we identify $A$ with\footnote{\Cref{S79}\Elistref{S17}.}
		$\{e_a\DS a\in A\}$ via $a\mapsto e_a$.
	\item The free semilattice with basis $A$, we denote it as $\mc SA$, has carrier set $\mc P_{\Fin}(A)\setminus\{\emptyset\}$,
		and the operation $\oplus$ given by set-theoretic union. Here we identify $A$ with $\{\{a\}\DS a\in A\}$
		via $a\mapsto\{a\}$.
	\item The free convex semilattice with basis $A$, we denote it as $\mc CA$, is equal to $\Moewe[21][-2][8.33]{\mc DA}$. 
		Here we identify $A$ with $\{\{e_a\}\DS a\in A\}$ via $a\mapsto\{e_a\}$. 
	\end{Enumerate}
\end{Proposition}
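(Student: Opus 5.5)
The plan is to verify, for each of the three items, that the proposed algebra is generated by (the image of) $A$ and has the universal property. Uniqueness of free algebras up to isomorphism then gives the identification.

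\textbf{Items (i) and (ii).} These are the classical cases and are handled the same way.

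For (i), $\mc DA$ is a convex subset of $\bb R^A$, so the axioms of \Cref{S60} hold. Every element $(\alpha_a)_{a\in A}$ has finite support, so it equals $\sum_a\alpha_ae_a$ in the sense of \Cref{S78}. Hence $A$ generates $\mc DA$. Given a convex algebra $X$ and a map $f\DF A\to X$, we define
\[
	\hat f\big((\alpha_a)_{a\in A}\big)\DE\sum_{a\in A}\alpha_af(a).
\]
Two points need checking. First, this is well defined: the $n$-ary barycentric operation must be independent of the order of summands and of zero coefficients. This follows from parametric commutativity and associativity together with the projection axiom. Second, $\hat f$ preserves $+_p$: this follows from the barycentric identity $\sum_i p_i\big(\sum_jq_{ij}x_j\big)=\sum_j\big(\sum_ip_iq_{ij}\big)x_j$. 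Uniqueness of $\hat f$ is forced by generation.

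For (ii), union is idempotent, commutative and associative. Singletons generate, and $\hat f(B)\DE\bigoplus_{b\in B}f(b)$ is well defined by those same three laws.

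\textbf{Item (iii), setup.} Write $\mc CA=\Moewe{\mc DA}$; by \Cref{S64} it is a convex semilattice. Each element $\Conv\{x_1,\dots,x_n\}$ with $x_i\in\mc DA$ can be written as $\{x_1\}\oplus\dots\oplus\{x_n\}$. Each $\{x_i\}$ is a convex combination of the singletons $\{e_a\}$, because $\{x\}+_p\{y\}=\{x+_py\}$. Hence the singletons generate $\mc CA$.

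\textbf{Item (iii), the extension.} Let $\bb X$ be a convex semilattice and $f\DF A\to X$. By (i) we obtain $\hat f\in\HomCA(\mc DA,X)$. We then set
\[
	F\big(\Conv\{x_1,\dots,x_n\}\big)\DE\hat f(x_1)\oplus\dots\oplus\hat f(x_n).
\]
Preservation of $\oplus$ is immediate. Preservation of $+_p$ follows from the distributive law of \Cref{S62}. It gives
\[
	\Big(\bigoplus_i u_i\Big)+_p\Big(\bigoplus_j v_j\Big)=\bigoplus_{i,j}(u_i+_pv_j),
\]
and on the $\mc CA$ side we have $\Conv\{x_i\}+_p\Conv\{y_j\}=\Conv\{x_i+_py_j\}$. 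This last identity is elementary convex geometry in $\bb R^A$.

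\textbf{The main obstacle: well-definedness of $F$.} A polytope has many spanning sets, so we must show that $\Conv M=\Conv N$ implies $\bigoplus\hat f(M)=\bigoplus\hat f(N)$. It suffices to show that adding to $M$ a point $y\in\Conv M$ does not change the value. Then both $M$ and $N$ can be compared with $M\cup N$.

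So we need the following lemma: in any convex semilattice, $u_1\oplus\dots\oplus u_n\oplus\sum_ip_iu_i=u_1\oplus\dots\oplus u_n$. We argue by induction on $n$ via the binary case $u\oplus v\oplus(u+_pv)=u\oplus v$. To prove the binary case, apply distributivity twice:
\[
	(u\oplus v)+_p(u\oplus v)=u\oplus(u+_pv)\oplus(v+_pu)\oplus v.
\]
By idempotence of $+_p$ the left-hand side is $u\oplus v$. The right-hand side therefore absorbs $u+_pv$.

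For the induction step, write $\sum_ip_iu_i=u_1+_{p_1}w$ with $w$ a convex combination of $u_2,\dots,u_n$. The induction hypothesis gives $u_2\oplus\dots\oplus u_n\oplus w=u_2\oplus\dots\oplus u_n$. Applying the binary case to $u_1$ and $w$ then absorbs $u_1+_{p_1}w$.

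Uniqueness of $F$ again follows from generation.
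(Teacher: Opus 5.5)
Your proof is correct, and it takes a different route from the paper, which gives no argument of its own. The paper calls (i) and (ii) well known. For (iii) it refers to Theorem~4 of Bonchi, Sokolova and Vignudelli, in the monad-theoretic setting mentioned in the introduction.

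\textbf{Comparison.} You instead verify the universal property directly, and you isolate the one ingredient that is not routine. This is the derived law $u\oplus v=u\oplus v\oplus(u+_pv)$ and its $n$-ary extension by induction. It is exactly what makes $F(\Conv M)=\bigoplus\hat f(M)$ independent of the spanning set $M$. Your derivation of it from $(u\oplus v)+_p(u\oplus v)=u\oplus v$ is correct. It uses distributivity on both sides, and the left-sided form follows from the right-sided one via parametric commutativity. Apart from this law, only distributivity and the elementary identity $\Conv M+_p\Conv N=\Conv(M+_pN)$ are needed. The citation places the result in a categorical framework, a presentation of the composite monad rather than just a description of each free algebra. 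Your argument makes the proposition self-contained and shows which equational consequence of the axioms does the work.

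\textbf{Minor points (not gaps).} In (i), the well-definedness of $\hat f$ and its compatibility with $+_p$ rest on the equivalence of the binary and barycentric axiomatisations (Remark~\ref{S78}). You use this as a black box, which matches the paper's ``well known''. In (iii), you take from Example~\ref{S64} that $\mc CA$ is a convex semilattice, which the paper also only asserts. Checking it uses the convexity of the sets, both for $A+_pA=A$ and for the distributive law. In the induction step you should split off the case $p_1=1$, but that case is trivial.
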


\noindent
Item (i) and (ii) of the above proposition are well-known, and (iii) is shown e.g.\ in \cite[Theorem~4]{bonchi.sokolova.vignudelli:2022}.

Let us add some comments concerning free algebras with a finite basis.

\begin{Remark}
\label{S65}
	Let $n\in\bb N$, $n\geq 1$. 
	\begin{Elist}
	\item Let $L$ be a linear space\footnote{\Cref{S79}\Elistref{S85}} 
		and let $\varphi\in\HomCA(\mc Dn,L)$. Then there exists a unique linear map 
		$\tilde\varphi\DF\bb R^n\to L$ with $\tilde\varphi|_{\mc Dn}=\varphi$. 
	\item We can find an isomorphic copy of $\mc Dn$ in $\bb R^{n-1}$. Let $\kappa_n\DF\bb R^n\to\bb R^{n-1}$ be the projection
		onto the first $n-1$ coordinates, i.e., 
		\[
			\kappa_n\big((\alpha_i)_{i=1}^n\big)\DE(\alpha_i)_{i=1}^{n-1}.
		\]
		This map is linear, surjective, and\footnote{\Cref{S79}\Elistref{S87}} 
		$\ker\kappa_n=\Span\{e_n\}$. Hence, $\kappa_n|_{\mc Dn}$ is an injective convex algebra 
		homomorphism of $\mc Dn$ into $\bb R^{n-1}$. Its image is
		\[
			\kappa_n(\mc Dn)=\Conv\big(\{0\}\cup\{e_1,\ldots,e_{n-1}\}\big)\subseteq\bb R^{n-1}.
		\]
		Note that $\kappa_n(\mc Dn)$ contains a nonempty open subset of $\bb R^{n-1}$.
	\item Being equal to $\Moewe[21][-2][8.33]{\mc Dn}$, the free convex semilattice $\mc Cn$ is a subalgebra of 
		$\Moewe[20][0][9.16]{\bb R^n}$. 
		We can also find an isomorphic copy of $\mc Cn$ in $\Moewe[35][-15][9.16]{\bb R^{n-1}}$,
		namely, via $\Moewe[18][2][6.25]{\kappa_n}|_{\mc Cn}$. 
		Note here that $\Moewe[18][2][6.25]{\kappa_n}|_{\mc Cn}$ is injective since $\kappa_n|_{\mc Dn}$ is injective.
	\end{Elist}
\end{Remark}

\subsection{The support function representation of $\Moewe[20][0][8.33]{\bb R^n}$}

We recall an indispensible geometric concept, see e.g.\ \cite{hug.weil:2020} or \cite{rockafellar:1970}.

\begin{Definition}
\label{S66}
	Let $A\neq\emptyset$ and let $K\subseteq\bb R^A$ be nonempty compact convex. 
	Then the \emph{support function} $\sigma(K,\Dummy)\DF\bb R^A_{\Fin}\to\bb R$ of the set $K$ is defined as
	\[
		\sigma(K,\omega)\DE\max\big\{\llceil x,\omega\rrfloor\DS x\in K\big\}\qquad\text{for }\omega\in\bb R^A_{\Fin}.
	\]
\end{Definition}

\noindent
The maximum in the definition of $\sigma(K,\omega)$ exists 
since\footnote{\Cref{S79}\Elistref{S82}} $\llceil\Dummy,\omega\rrfloor$ is continuous.

\begin{Definition}
\label{S32}
	Let $A\neq\emptyset$. Then we denote 
	\[
		\rho_A\DF\left\{
		\begin{array}{rcl}
			\Moewe[20][0][9.16]{\bb R^A} & \to & \bb R^{(\bb R^A_{\Fin})}
			\\[2mm]
			K & \mapsto & (\omega\mapsto\sigma(K,\omega))
		\end{array}
		\right.
	\]
	and speak of the \emph{support function representation} of $\Moewe[20][0][9.16]{\bb R^A}$. 

	If $A$ is finite, say $|A|\ED n$, we write $\rho_n$ instead of $\rho_A$.
\end{Definition}

\noindent
We list some properties of support functions that will be used throughout the paper without further notice.

\begin{Remark}
\label{S74}
	Let $A\neq\emptyset$.
	\begin{Elist}
	\item The support function representation $\rho_A$ is a convex semilattice homomorphism. This is checked by writing out the
		definitions of convex operations.
	\item Let $K$ be a nonempty compact convex subset of $\bb R^A$. 
		Then $K$ can be recovered from $\sigma(K,\Dummy)$ as
		\begin{equation}
		\label{S15}
			A=\bigcap_{\omega\in\bb R^A_{\Fin}}\big\{x\in\bb R^A\DS \llceil x,\omega\rrfloor\leq\sigma(K,\omega)\big\}.
		\end{equation}
		This follows by the Hahn-Banach separation theorem in the space $\bb R^A$.
		As a consequence, $\rho_A$ is injective.
	\item The function $\sigma(K,\Dummy)$ is positively homogeneous\footnote{\Cref{S79}\Elistref{S81}.}. 
		Hence, the intersection \cref{S15} may be taken over smaller sets. For example
		\begin{equation}
		\label{S88}
			A=\bigcap_{\substack{\omega=(\omega_a)_{a\in A}\in\bb R^A_{\Fin}\\ \sum_{a\in A}|\omega_a|^2=1}}
			\mkern-6mu\big\{x\in\bb R^A\DS \llceil x,\omega\rrfloor\leq\sigma(K,\omega)\big\}.
		\end{equation}
	\item Assume that $A$ is finite. Then \cref{S88} is the intersection over the unit sphere\footnote{\Cref{S79}\Elistref{S38}.} 
		$S^{|A|-1}$. The function $\llceil x,\Dummy\rrfloor$ is continuous and hence it is enough to take the intersection
		over any dense subset $T$ of the sphere. As a consequence, for any such set $T$, the function 
		$(\Dummy|_T)\circ\rho_A\DF\Moewe[20][0][9.16]{\bb R^A}\to\bb R^{(\bb R^T)}$ is an injective convex semilattice homomorphism.
	\item 
	\label{S89}
		Since $\mc CA$ is a subalgebra of $\Moewe[20][0][9.16]{\bb R^A}$, the above variants of the support function representation
		give rise to representations of $\mc CA$. Note that for finite $A$, say $|A|\ED n$, we can reduce the dimension by $1$ by
		precomposing with $\Moewe[18][2][6.25]{\kappa_n}$.
	\item Let $M\subseteq\bb R^A$ be finite and nonempty. Then 
		\[
			\sigma(\Conv M,\omega)=\max\big\{\llceil y,\omega\rrfloor\DS y\in M\big\}.
		\]
		The function $\sigma(\Conv M,\Dummy)$ is piecewise linear\footnote{\Cref{S79}\Elistref{S59}.}. To see this 
		use the cones and linear maps 
		\[
			C_y\DE\big\{\omega\in\bb R^A_{\Fin}\DS\sigma(\Conv M,\omega)=\llceil y,\omega\rrfloor\big\},\ 
			\xi_y\DE\llceil y,\Dummy\rrfloor\quad\text{where }y\in M.
		\]
	\end{Elist}
\end{Remark}

\noindent
Let us elaborate a bit more on the particular case of $\mc C2$. This algebra is significantly simpler than $\mc Cn$ for $n\geq 3$; the
reason being that $S^0$ is much different from $S^n$, $n\geq 1$. 

\begin{proposition}
\label{S72}
	\phantom{}
	\begin{Enumerate}
	\item There exists an injective convex semilattice homomorphism of $\mc C2$ into $\bb R^2$.
	\item If $|A|\geq 3$ there exists no injective convex semilattice homomorphism of $\mc CA$ into any finite power $\bb R^d$.
	\end{Enumerate}
\end{proposition}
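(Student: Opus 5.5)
For (i) I use the support function representation reduced by $\kappa_2$, as in \Cref{S74}\Elistref{S89}. Via $\Moewe[18][2][6.25]{\kappa_2}$ the algebra $\mc C2$ embeds into $\Moewe[20][0][9.16]{\bb R}$, with image the set of all closed intervals $[a,b]\subseteq[0,1]$. In dimension one the sphere $S^0=\{1,-1\}$ is finite. Hence by \Cref{S74} the map
\[
	K\mapsto\big(\sigma(K,1),\sigma(K,-1)\big)=(b,-a)
\]
is an injective convex semilattice homomorphism of $\Moewe[20][0][9.16]{\bb R}$ into $\bb R^2$. Composing the two maps gives the required embedding of $\mc C2$ into $\bb R^2$. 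One can also check directly that $\oplus$ goes to the coordinatewise maximum ($\max$ of the right endpoints and $\max$ of the negated left endpoints), and that $+_p$ acts affinely on both endpoints.

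For (ii) I first reduce to $|A|=3$. Fix $a_1,a_2,a_3\in A$. The inclusion $\{a_1,a_2,a_3\}\to A$ induces an injective homomorphism $\mc C3\to\mc CA$ (image of $\Conv M$ for $M\subseteq\mc D3$ finite). So it suffices to show that there is no injective homomorphism $\Phi\DF\mc C3\to\bb R^d$.

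Each coordinate $\Phi_k=\pi_k\circ\Phi$ is a homomorphism $\mc C3\to\bb R$. Let $\phi_k=\Phi_k|_{\mc D3}$, a convex algebra homomorphism; by \Cref{S65} it extends to an affine (indeed linear) functional on $\bb R^3$. The key claim is the following: for $K=\Conv\{x_1,\dots,x_m\}$ we have
\[
	\Phi_k(K)=\Phi_k(\{x_1\}\oplus\dots\oplus\{x_m\})=\max_i\phi_k(x_i)=\max_{x\in K}\phi_k(x),
\]
which follows from \Cref{S96}, since $K=\{x_1\}\oplus\dots\oplus\{x_m\}$ in $\mc C3$ and $\oplus$ goes to $\max$. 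Writing $\phi_k=\llceil\Dummy,\omega_k\rrfloor+c_k$ on $\mc D3$, we get $\Phi_k(K)=\sigma(K,\omega_k)+c_k$. Thus $\Phi(K)$ is determined by the $d$ values $\sigma(K,\omega_1),\dots,\sigma(K,\omega_d)$.

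It remains to find $K\neq K'$ in $\mc C3$ with $\sigma(K,\omega_k)=\sigma(K',\omega_k)$ for all $k$. This is the main step, and it is where $n\geq3$ is used: after applying $\kappa_3$ we work in $\bb R^2$, and the directions $\omega_k$ project to only finitely many directions in the plane. I pick a direction $u\in S^1$ distinct, up to positive scaling, from all projected nonzero $\omega_k$. I take a small polygon $K'$ in the interior of $\kappa_3(\mc D3)$ that has an edge with outer normal $u$, and let $K$ be $K'$ with a tiny corner cut off along that edge. If the cut is small enough, the maximum of each $\llceil\Dummy,\omega_k\rrfloor$ is still attained at a vertex of $K'$ that survives the cut. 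Indeed, for $\omega_k$ not parallel to $u$ the maximizing vertex can be kept away from the cut, and zero directions are trivial. Concretely, I take $K'$ to be a polygon with many vertices, say a regular polygon whose edge normals avoid all $\omega_k$, and shave off a neighbourhood of one vertex $v$ that is a maximizer of no $\omega_k$; such a vertex exists once there are more vertices than $d$. Then $\Phi(K)=\Phi(K')$ while $K\neq K'$, contradicting injectivity. The obstacle is only this bookkeeping: choosing $v$ so that it is never the unique maximizer. A cleaner version takes $K'=\Conv(V)$ with $|V|>2d$ points in convex position and $K=\Conv(V\setminus\{v\})$, where $v$ is not a maximizer of any $\omega_k$ over $V$; each $\omega_k$ has at most two maximizing vertices, so such a $v$ exists.
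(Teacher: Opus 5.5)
Your proof is correct. Part (i) is the paper's map $\tau$ from \cref{S73}. Part (ii) takes a genuinely different route.

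You use that $\Phi$ preserves $\oplus$, so each coordinate $\pi_k\circ\Phi$ is $K\mapsto\sigma(K,\omega_k)$ for a fixed $\omega_k\in\bb R^3$. This is exactly \Cref{S4}. The identity $\Conv\{x_1,\ldots,x_m\}=\{x_1\}\oplus\cdots\oplus\{x_m\}$ comes directly from the definition of $\oplus$ in \Cref{S64}, not from \Cref{S96}. You then observe that $d$ support values cannot separate $\Conv V$ from $\Conv(V\setminus\{v\})$.

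The paper never uses $\oplus$ on the target side. It builds polygons $A_n^m$ whose convex combinations $\sum_m p_mA_n^m$ have uniquely determined coefficients, read off from the support values at $\xi_{2n}^{2k+1}$. It then applies Carath\'eodory's theorem in $\bb R^d$ to $d+2$ of them. The paper therefore proves the stronger fact that $\mc C3$ admits no injective convex \emph{algebra} homomorphism into any $\bb R^d$. Your argument is shorter and more elementary but needs $\Phi$ to preserve $\oplus$, which is all the proposition asks.

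Your route also anticipates the later theory. It says $h_\Phi$ takes only $d$ values, so by \Cref{S2}(i) $H(\Phi)$ is a union of $d$ half-planes or lines. It can therefore never equal $H(\rho_3)=\bb R^3$, which \Cref{S3} would require for injectivity.

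Two small bookkeeping points in your ``cleaner version''. First, the bound of at most two maximizing vertices holds only for $\omega_k\notin\Span\{\E[3]\}$. The other coordinates are constant on $\mc C3$ and should be discarded before you count $|V|>2d$. Second, ``convex position'' must mean that every point of $V$ is an extreme point of $\Conv V$. This guarantees both $v\notin\Conv(V\setminus\{v\})$ and that each maximizer set, being a face, contains at most two points of $V$.
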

\begin{proof}
	Item (i) is contained in \Cref{S74}\Elistref{S89}: the map 
	\begin{equation}
	\label{S73}
		\tau\DE\big(\Dummy|_{S^0}\circ\rho_1\circ\Moewe[18][2][6.25]{\kappa_2}\big)\big|_{\mc Cn}
	\end{equation}
	is an injective convex semilattice homomorphism of $\mc C2$ into $\bb R^{\{-1,1\}}$. 

	The idea for the proof of item (ii) is that there are too many different convex shapes in the plane. To make this
	precise, we use roots of unity
	\begin{equation}
	\label{S99}
		\xi_n^k\DE\binom{\cos\frac{2\pi k}n}{\sin\frac{2\pi k}n}
		\quad\text{for }n\in\bb N,n\geq 1,\ k\in\{0,\ldots,n-1\}.
	\end{equation}
	For $n\geq 2$ and $m\in\{0,\ldots,n-1\}$ set 
	\[
		A_n^m\DE\Conv\Big(\big\{\xi_n^k\DS k\in\{0,\ldots,n-1\}\big\}\cup\big\{\xi_{2n}^{2m+1}\}\Big).
	\]
	\begin{center}
	\begin{tikzpicture}[x=1pt,y=1pt,scale=1,font=\fontsize{14}{14}]
		\draw[thin,dotted,->] (0,-50)--(0,60);
		\draw[thin,dotted,->] (-60,0)--(60,0);
		\draw[pattern=dots,pattern color=green,thin] 
			({40*cos(0)},{40*sin(0)})--({40*cos(60)},{40*sin(60)})--({40*cos(120)},{40*sin(120)})
			--({40*cos(150)},{40*sin(150)})
			--({40*cos(180)},{40*sin(180)})--({40*cos(240)},{40*sin(240)})--({40*cos(300)},{40*sin(300)})
			--({40*cos(360)},{40*sin(360)})
			;
		\draw[fill,red] ({40*cos(0)},{40*sin(0)}) circle [radius=1];
		\draw[fill,red] ({40*cos(60)},{40*sin(60)}) circle [radius=1];
		\draw[fill,red] ({40*cos(120)},{40*sin(120)}) circle [radius=1];
		\draw[fill,red] ({40*cos(150)},{40*sin(150)}) circle [radius=1];
		\draw[fill,red] ({40*cos(180)},{40*sin(180)}) circle [radius=1];
		\draw[fill,red] ({40*cos(240)},{40*sin(240)}) circle [radius=1];
		\draw[fill,red] ({40*cos(300)},{40*sin(300)}) circle [radius=1];
		\draw (87,35) node {$A_6^2$};
	\end{tikzpicture}
	\end{center}
	Then we have
	\[
		\sigma\big(A_n^m,\xi_{2n}^{2k+1}\big)=
		\begin{cases}
			1 \CAS k=m
			\\
			\sqrt{\frac 12(1+\cos\frac{2\pi}n)} \CAS k\in\{0,\ldots,n-1\}\setminus\{m\}
		\end{cases}
	\]
	Let $p_0,\ldots,p_{n-1}\geq 0$ with $\sum_{m=0}^{n-1}p_m=1$, then the above yields
	\[
		\sigma\Big(\sum_{m=0}^{n-1}p_mA_n^m,\xi_{2n}^{2k+1}\Big)=
		\sum_{m=0}^{n-1}p_n\sigma\big(A_n^m,\xi_{2n}^{2k+1}\big)=
		p_k+(1-p_k)\sqrt{\textstyle{\frac 12(1+\cos\frac{2\pi}n)}}.
	\]
	Thus the coefficient $p_k$ can be recovered from the set $\sum_{m=0}^{n-1}p_nA_n^m$. We conclude that every element of 
	$\Conv\{A_n^m\DS m\in\{0,\ldots,n-1\}\}$ has a unique representation as a convex combination $\sum_{m=0}^{n-1}p_mA_n^m$. In
	particular, the barycenter $\sum_{m=0}^{n-1}\frac 1nA_n^m$ is not contained in any set $\Conv\{A_n^m\DS m\in I\}$ where 
	$I\subsetneq\{0,\ldots,n-1\}$. This shows that
	\[
		\Conv\big\{A_n^m\DS m\in\{0,\ldots,n-1\}\big\}\supsetneq
		\bigcup_{I\subsetneq\{0,\ldots,n-1\}}\Conv\big\{A_n^m\DS m\in I\big\}.
	\]
	Now assume we have $d\in\bb N$ and $\Phi\in\HomCA(\mc C3,\bb R^d)$. Choose a bijective affine map 
	$\varphi\DF\bb R^2\to\bb R^2$ that maps the
	closed unit disk $\bb B\subseteq\bb R^2$ into $\kappa_3(\mc D3)$. Then we have the convex semilattice homomorphism
	$(\,\Moewe[18][2][6.25]{\kappa_3})^{-1}\circ\Moewe[12][8][5.9]\varphi\DF\Moewe[14][0][9.16]{\bb B}\to\mc C3$,
	and thus the convex algebra homomorphism $\lambda\DE\Phi\circ(\,\Moewe[18][2][6.25]{\kappa_3})^{-1}\circ\Moewe[12][8][5.9]\varphi$. 
	By Caratheodory's theorem 
	\[
		\Conv\big\{\lambda(A_{d+2}^m)\DS m\in\{0,\ldots,d+1\}\big\}=
		\bigcup_{\substack{I\subseteq\{0,\ldots,d+1\}\\ |I|=d+1}}\Conv\big\{\lambda(A_{d+2}^m)\DS m\in I\big\}.
	\]
	It follows that $\lambda$ is not injective, and in turn that $\Phi$ is not injective.

	The general case that $|A|\geq 3$ follows since $\mc C3$ embeds in $\mc CA$. 
\end{proof}

\begin{Remark}
\label{S68}
	Let us compute the action of the embedding \cref{S73}.
	An element $A\in\mc C2$ is a set of the form $A=\{pe_1+(1-p)e_2\DS p\in[a,b]\}$ with some $0\leq a\leq b\leq 1$. We have 
	\[
		\Moewe[18][2][6.25]{\kappa_2}(A)=[a,b]
		\quad\text{and}\quad
		[(\rho_1\circ\Moewe[18][2][6.25]{\kappa_2})(A)](\omega)=
		\begin{cases}
			b \CAS \omega=1,
			\\
			-a \CAS \omega=-1.
		\end{cases}
	\]
	Hence, we obtain the formula
	\[
		\tau(A)=
		\begin{pmatrix}
			[(\rho_1\circ\Moewe[18][2][6.25]{\kappa_2})(A)](-1)
			\\[2mm]
			[(\rho_1\circ\Moewe[18][2][6.25]{\kappa_2})(A)](1)
		\end{pmatrix}
		=\binom{-a}{b}
	\]
	which can be pictured as
	\begin{center}
	\begin{tikzpicture}[x=1.2pt,y=1.2pt,scale=0.7,font=\fontsize{12}{12}]
		\draw[->] (10,0) -- (-120,0);
		\draw[->] (0,-10) -- (0,120);
		\draw[thick] (0,0) -- (-100,100) -- (0,100) -- (0,0);
		\draw[fill] (0,100) circle [radius=1];
		\draw[fill] (0,0) circle [radius=1];
		\draw[fill] (-100,100) circle [radius=1];
		\draw (10,100) node {$1$};
		\draw (10,-10) node {$0$};
		\draw[thin] (-100,-3) -- (-100,3);
		\draw (-100,-10) node {$-1$};

		\draw[dotted] (-15,35) -- (-45,85);
		\draw[dotted] (-15,85) -- (-45,85);
		\draw[dotted] (-15,35) -- (-15,85);

		\draw[fill,color=green] (-15,85) circle [radius=1];
		\draw (-12,91) node {$\scriptstyle x\oplus y$};
		\draw[fill,color=red] (-15,35) circle [radius=1];
		\draw (-10,30) node {$\scriptstyle x$};
		\draw[fill,color=red] (-45,85) circle [radius=1];
		\draw (-52,85) node {$\scriptstyle y$};
		\draw[fill,color=blue] (-36,70) circle [radius=1];
		\draw (-50,68) node {$\scriptstyle x+_py$};

		\draw (-205,70) node {\large$\mc C2$};
		\draw (-140,80) node {$\tau$};
		\draw[->] (-180,70) -- (-100,70);
	\end{tikzpicture}
	\end{center}
\end{Remark}

\section{A description of kernels}

In this section we study the homomorphism set $\HomCSL(\mc CA,\bb R^\Omega)$, where $A$ and $\Omega$ are nonempty sets and 
$\bb R^\Omega$ is a convex semilattice as in \Cref{S63}. 
Our plan is to associate with every homomorphism $\Phi\DF\mc CA\to\bb R^\Omega$ a certain subset of $\bb R^A$, 
and show that this subset characterises inclusions of kernels, cf.\ \Cref{S3}.

\subsection{The set $H(\Phi)$}

\begin{Definition}
\label{S1}
	Let $A,\Omega\neq\emptyset$. For $\Phi\in\HomCSL(\mc CA,\bb R^\Omega)$ we define
	\begin{align*}
		& h_\Phi\DF\left\{
		\begin{array}{rcl}
			\Omega & \to & \bb R^A
			\\
			\omega & \mapsto & \big(\Phi(\{e_a\})(\omega)\big)_{a\in A}
		\end{array}
		\right.
		\\[2mm]
		& H(\Phi)\DE\ov{\bigcup_{\omega\in\Omega}\Cone\{h_\Phi(\omega)\}+\Span\{\E[A]\}}
		\\
		&\mkern48mu
		=\ov{\big\{\lambda h_\Phi(\omega)+\alpha\E[A]\DS \lambda\geq 0,\omega\in\Omega,\alpha\in\bb R\big\}}
		\subseteq\bb R^A.
	\end{align*}
	Moreover, denote
	\[
		H(A,\Omega)\DE\big\{H(\Phi)\DSb \Phi\in\HomCSL(\mc CA,\bb R^\Omega)\big\}\subseteq\mc P(\bb R^A).
	\]
\end{Definition}

\begin{Example}
\label{S83}
	\phantom{}
	\begin{Enumerate}
	\item For every constant map $\Phi\DF\mc CA\to\bb R^\Omega$ we have $H(\Phi)=\Span\{\E[A]\}$. 
		To see this write $\Phi(\{e_a\})=(y_\omega)_{\omega\in\Omega}$. Then 
		$h_\phi(\omega)=y_\omega\E[A]$. 
	\item For the support function representation $\rho_A\in\HomCSL(\mc CA,\bb R_{\Fin}^A)$ we have 
		$H(\rho_A)=\bb R_{\Fin}^A+\Span\{\E[A]\}$.
		To see this note that for each $\omega=(\omega_a)_{a\in A}\in\bb R_{\Fin}^A$ we have
		\[
			\sigma(\{e_a\},\omega)=\llceil e_a,\omega\rrfloor=\omega_a.
		\]
		Thus 
		\[
			h_\Phi(\omega)=\big(\rho_A(\{e_a\})(\omega)\big)_{a\in A}=(\omega_a)_{a\in A}=\omega.
		\]
		In particular, if $A$ is finite, we have $H(\rho_A)=\bb R^A$.
	\end{Enumerate}
\end{Example}

\noindent
By inspecting the definition of the sets $H(\Phi)$ it is easy to give an implicit yet useful description of the set $H(A,\Omega)$.

\begin{lemma}
\label{S12}
	Let $A,\Omega\neq\emptyset$. Then 
	\begin{equation}
	\label{S90}
		H(A,\Omega)=\Big\{
		\ov{\bigcup_{\omega\in\Omega}\Cone\{y_\omega\}+\Span\{\E[A]\}}
		\DSB (y_\omega)_{\omega\in\Omega}\in(\bb R^A)^\Omega
		\Big\}.
	\end{equation}
\end{lemma}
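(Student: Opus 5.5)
We prove the two inclusions in \cref{S90} separately. The inclusion ``$\subseteq$'' is immediate from \Cref{S1}: for $\Phi\in\HomCSL(\mc CA,\bb R^\Omega)$ put $y_\omega\DE h_\Phi(\omega)$. Then $H(\Phi)$ is literally the set on the right-hand side for this family $(y_\omega)_{\omega\in\Omega}$.

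For ``$\supseteq$'' we fix an arbitrary family $(y_\omega)_{\omega\in\Omega}\in(\bb R^A)^\Omega$ and construct $\Phi\in\HomCSL(\mc CA,\bb R^\Omega)$ with $h_\Phi(\omega)=y_\omega$ for all $\omega\in\Omega$. Since $H(\Phi)$ depends only on $h_\Phi$, this suffices. To build $\Phi$ we use freeness of $\mc CA$ from \Cref{S13}(iii). The element $\{e_a\}$ is the free generator corresponding to $a$, so prescribing values on generators extends uniquely to a homomorphism. We therefore assign to the generator $\{e_a\}$ the element
\[
	f_a\DE\big(y_\omega(a)\big)_{\omega\in\Omega}\in\bb R^\Omega ,
\]
where $y_\omega(a)$ denotes the $a$-th coordinate of $y_\omega$. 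Here $\bb R^\Omega$ is a convex semilattice by \Cref{S63}, so the universal property gives a unique homomorphism $\Phi$ with $\Phi(\{e_a\})=f_a$. By construction
\[
	h_\Phi(\omega)=\big(\Phi(\{e_a\})(\omega)\big)_{a\in A}=\big(y_\omega(a)\big)_{a\in A}=y_\omega ,
\]
and hence the set on the right-hand side for $(y_\omega)_{\omega\in\Omega}$ equals $H(\Phi)\in H(A,\Omega)$.

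The only point needing care is the use of freeness: the identification of $A$ with $\{\{e_a\}\DS a\in A\}$ in \Cref{S13}(iii) must make these singletons a free basis. That is exactly what the proposition states, so there is no real obstacle. If one prefers an explicit construction, one can write
\[
	\Phi(K)(\omega)\DE\max\big\{\llceil x,y_\omega\rrfloor\DS x\in K\big\}\qquad\text{for }K\in\mc CA .
\]
This is well defined because $K\subseteq\mc DA\subseteq\bb R^A_{\Fin}$, and the maximum exists because $K$ is the convex hull of finitely many finitely supported vectors. One checks directly, as in \Cref{S74}(i), that $\Phi$ preserves $+_p$ and $\oplus$, and clearly $\Phi(\{e_a\})(\omega)=y_\omega(a)$.
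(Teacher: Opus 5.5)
Your proof is correct and matches the paper's argument: ``$\subseteq$'' holds by definition with $y_\omega\DE h_\Phi(\omega)$, and ``$\supseteq$'' uses freeness of $\mc CA$ to send $\{e_a\}$ to $(y_\omega(a))_{\omega\in\Omega}$, which gives $h_\Phi(\omega)=y_\omega$. One cosmetic point in your optional explicit formula: under the paper's convention the pairing takes $\bb R^A$ in the first slot and $\bb R^A_{\Fin}$ in the second, so it should read $\max\{\llceil y_\omega,x\rrfloor\DS x\in K\}$.
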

\begin{proof}
	The inclusion ``$\subseteq$'' holds by definition.
	Assume we are given $y_\omega\in\bb R^A$ for $\omega\in\Omega$, and write $y_\omega=(\eta_{\omega,a})_{a\in A}$. 
	Define a function $\varphi\DF A\to\bb R^\Omega$ as
	\[
		\varphi(a)\DE (\eta_{\omega,a})_{\omega\in\Omega},
	\]
	and let $\Phi\in\HomCSL(\mc CA,\bb R^\Omega)$ be the unique homomorphism with $\Phi(\{e_a\})=\varphi(a)$, $a\in A$. Then 
	$h_\Phi(\omega)=y_\omega$, and hence 
	\[
		H(\Phi)=\ov{\bigcup_{\omega\in\Omega}\Cone\{y_\omega\}+\Span\{\E[A]\}}.
	\]
\end{proof}

\noindent
If $\Omega$ is finite or $A$ has at most two elements, it is not necessary to take the closure in \cref{S90}. 

\begin{lemma}
\label{S2}
	\phantom{}
	\begin{Enumerate}
	\item \phantom{}\\[-15.3mm]
		\begin{multline*}
			\mkern-15mu
			\forall m\in\bb N\setminus\{0\}\DP 
			\\
			H(A,m)=
			\Big\{\bigcup_{i=1}^m\Cone\{y_i\}+\Span\{\E[A]\}\DSB y_1,\ldots,y_m\in\bb R^A\Big\}.
		\end{multline*}
	\item \Dis{%
		\forall\Omega\neq\emptyset\DP H(1,\Omega)=\{\bb R\}
		}.
	\item \Dis{%
			H(2,1)=\Big\{
			\textstyle{\Span\{\E[2]\},\big\{\binom\alpha\beta\in\bb R^2\DSb \alpha\leq\beta\big\},
				\big\{\binom\alpha\beta\in\bb R^2\DSb\alpha\geq\beta\big\}}\Big\}.
		}
	\item \phantom{}\\[-14.3mm]
		\begin{multline*}
			\mkern-15mu
			\forall |\Omega|\geq 2\DP
			\\
			H(2,\Omega)=\Big\{
			\textstyle{\Span\{\E[2]\},\big\{\binom\alpha\beta\in\bb R^2\DSb \alpha\leq\beta\big\},
				\big\{\binom\alpha\beta\in\bb R^2\DSb\alpha\geq\beta\big\},\bb R^2}\Big\}.
		\end{multline*}
	\end{Enumerate}
\end{lemma}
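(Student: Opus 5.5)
All four items rest on \Cref{S12}, which identifies $H(A,\Omega)$ with the family of sets $\ov{\bigcup_{\omega}\Cone\{y_\omega\}+\Span\{\E[A]\}}$ for arbitrary families $(y_\omega)_{\omega\in\Omega}$. So the task reduces to describing the sets $U\DE\bigcup_{\omega\in\Omega}\Cone\{y_\omega\}+\Span\{\E[A]\}$ and their closures.

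For (i) the plan is to show that, when $\Omega=\{1,\ldots,m\}$ is finite, the set $U$ is already closed. Write $U=\bigcup_{i=1}^m\big(\Cone\{y_i\}+\Span\{\E[A]\}\big)$. A finite union of closed sets is closed, so it suffices that each $\Cone\{y_i\}+\Span\{\E[A]\}$ is closed in $\bb R^A$. This set is the image of $[0,\infty)\times\bb R$ under the continuous linear map $(\lambda,\alpha)\mapsto\lambda y_i+\alpha\E[A]$. Its image lies in the finite-dimensional subspace $\Span\{y_i,\E[A]\}$, which is closed in the product topology.
\begin{Ilist}
\item If $y_i\in\Span\{\E[A]\}$, the image is that line.
\item Otherwise the map is a linear isomorphism onto a two-dimensional subspace, so the image of the closed set $[0,\infty)\times\bb R$ is closed there.
\end{Ilist}
A closed subset of a closed finite-dimensional subspace (where all Hausdorff vector topologies agree) is closed in $\bb R^A$. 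This gives (i).

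For (ii), when $A=\{a\}$ we have $\E[A]=1$, so $\Span\{\E[A]\}=\bb R$ and $U=\bb R$ for every family.

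For (iii) and (iv), the key step is to quotient by $\Span\{\E[2]\}$ via the linear functional $\ell\binom\alpha\beta\DE\beta-\alpha$, whose kernel is $\Span\{\E[2]\}$. Because $U$ is invariant under adding $\Span\{\E[2]\}$, it is determined by its image $\ell(U)=\bigcup_\omega\Cone\{\ell(y_\omega)\}\subseteq\bb R$, namely $U=\ell^{-1}(\ell(U))$. Moreover $\ov U=\ell^{-1}(\ov{\ell(U)})$, since $\ell$ is an open continuous surjection and $U$ is saturated. A union of rays from $0$ in $\bb R$ is one of $\{0\}$, $[0,\infty)$, $(-\infty,0]$, or $\bb R$, each already closed. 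Preimages give, respectively:
\begin{Ilist}
\item $\Span\{\E[2]\}$,
\item $\{\alpha\leq\beta\}$,
\item $\{\alpha\geq\beta\}$,
\item $\bb R^2$.
\end{Ilist}
With $|\Omega|=1$ only one ray occurs, so the option $\bb R$ is excluded, giving (iii). For $|\Omega|\geq2$, all four are realised:
\begin{Ilist}
\item take all $y_\omega=0$ for $\Span\{\E[2]\}$;
\item take all $y_\omega=e_2$ for $\{\alpha\leq\beta\}$;
\item take all $y_\omega=e_1$ for $\{\alpha\geq\beta\}$;
\item take one $y_\omega=e_1$ and the others $e_2$ for $\bb R^2$.
\end{Ilist}
This gives (iv).

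The only step needing any care is the closedness argument in (i) for infinite $A$, where one must check that closedness inside a finite-dimensional subspace transfers to the product topology. This follows because finite-dimensional subspaces of the Hausdorff topological vector space $\bb R^A$ are closed and carry the Euclidean topology.
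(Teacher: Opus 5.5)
Your proposal is correct and follows the paper's proof essentially step for step. For (i) you show that each $\Cone\{y\}+\Span\{\E[A]\}$ is closed via the at most two-dimensional span. For (iii) and (iv) you use a case distinction on the sign of $\eta_{\omega,2}-\eta_{\omega,1}$, which you encode through the functional $\ell$. The only difference is in (ii): you read off $H(1,\Omega)=\{\bb R\}$ directly from \Cref{S12} because $\Span\{\E[A]\}=\bb R$, whereas the paper notes that every homomorphism out of the one-element algebra $\mc C1$ is constant and invokes \Cref{S83}(i).
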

\begin{proof}
	As a first step we show that for each $y\in\bb R^A$ the set $\Cone\{y\}+\Span\{\E[A]\}$ is closed. 
	If $y\in\Span\{\E[A]\}$ we have $\Cone\{y\}+\Span\{\E[A]\}=\Span\{\E[A]\}$ and this is closed. 
	If $y\notin\Span\{\E[A]\}$, then $\{y,\E[A]\}$ is linearly independent, and hence the map 
	\[
		\psi\DF\left\{
		\begin{array}{rcl}
			\bb R^2 & \to & \bb R^A
			\\
			\binom\alpha\beta & \mapsto & \alpha y+\beta\E[A]
		\end{array}
		\right.
	\]
	is a homeomorphism onto its image. We have 
	\[
		\Cone\{y\}+\Span\{\E[A]\}=\psi\Big(\Big\{\binom\alpha\beta\in\bb R^2\DSB\alpha\geq 0\Big\}\Big),
	\]
	and hence $\Cone\{y\}+\Span\{\E[A]\}$ is a closed subset of $\Span\{y,\E[A]\}$. This linear span is closed in $\bb R^A$, and
	hence also $\Cone\{y\}+\Span\{\E[A]\}$ is closed in $\bb R^A$.

	Item (i) can now be deduced easily: every finite union 
	\[
		\bigcup_{i=1}^m\Big(\Cone\{y_i\}+\Span\{\E[A]\}\Big)
	\]
	is closed, and if $|\Omega|<\infty$ finite unions exhaust $H(A,\Omega)$. 

	Item (ii) follows from \Cref{S83}(i), since $\mc C1$ has only one element and hence every homomorphism 
	$\Phi\DF\mc C1\to\bb R^\Omega$ is constant. 

	To prove (iii) and (iv) we again refer to \Cref{S12}. 
	The four sets written in the assertion occur depending on the following case distinctions
	(here we write $y_\omega=(\eta_{\omega,j)})_{j=1}^2$):
	\[
		\begin{cases}
			\Span\{\E[2]\} \CAS \forall\omega\in\Omega\DP \eta_{\omega,1}=\eta_{\omega,2}
			\\
			\big\{\binom\alpha\beta\in\bb R^2\DSb \alpha\leq\beta\big\} \CAS 
			\forall\omega\in\Omega\DP \eta_{\omega,1}\leq\eta_{\omega,2}\wedge
			\exists\omega\in\Omega\DP \eta_{\omega,1}<\eta_{\omega,2}
			\\
			\big\{\binom\alpha\beta\in\bb R^2\DSb \alpha\geq\beta\big\} \CAS 
			\forall\omega\in\Omega\DP \eta_{\omega,1}\geq\eta_{\omega,2}\wedge
			\exists\omega\in\Omega\DP \eta_{\omega,1}>\eta_{\omega,2}
			\\
			\bb R^2 \CAS
			\exists\omega\in\Omega\DP \eta_{\omega,1}<\eta_{\omega,2}\wedge
			\exists\omega\in\Omega\DP \eta_{\omega,1}>\eta_{\omega,2}
		\end{cases}
	\]
	If $|\Omega|=1$, the last case cannot occur.
\end{proof}

\noindent
Next we show a monotonicity property.

\begin{lemma}
\label{S84}
	Let $A\neq\emptyset$. 
	\begin{Enumerate}
	\item If $\Omega,\Omega'\neq\emptyset$ and $|\Omega|\leq|\Omega'|$, then $H(A,\Omega)\subseteq H(A,\Omega')$.
	\item Assume that $A$ is infinite or has at most two elements. If $|\Omega|\geq|A|$, then $H(A,\Omega)=H(A,A)$.
	\item Assume that $A$ has at least three elements. Then 
		\[
			H(A,1)\subsetneq H(A,2)\subsetneq H(A,3)\subsetneq\cdots\subsetneq H(A,\bb N).
		\]
	\end{Enumerate}
\end{lemma}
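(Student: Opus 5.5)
Throughout, the proof works with the description of $H(A,\Omega)$ from \Cref{S12}: an element of $H(A,\Omega)$ is the closure of $\bigcup_{\omega}\Cone\{y_\omega\}+\Span\{\E[A]\}$ for some family $(y_\omega)_{\omega\in\Omega}$ in $\bb R^A$.

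For (i), fix a surjection $\pi\DF\Omega'\to\Omega$, which exists since $1\leq|\Omega|\leq|\Omega'|$. Given $(y_\omega)_{\omega\in\Omega}$, set $y'_{\omega'}\DE y_{\pi(\omega')}$. Surjectivity of $\pi$ gives $\{y'_{\omega'}\}=\{y_\omega\}$ as sets, so both families produce the same set. Hence $H(A,\Omega)\subseteq H(A,\Omega')$.

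For (ii), by (i) it suffices to prove $H(A,\Omega)\subseteq H(A,A)$ when $|\Omega|\geq|A|$.
\begin{Ilist}
\item If $|A|=1$, then $H(1,\Omega)=\{\bb R\}$ for every $\Omega$ by \Cref{S2}(ii).
\item If $|A|=2$, then $H(2,\Omega)=H(2,2)$ for all $|\Omega|\geq 2$ by \Cref{S2}(iv).
\item If $A$ is infinite, the key point is that $\bb R^A$ with the product topology is separable when $|A|\leq 2^{\aleph_0}$. More usefully, any union $\bigcup_\omega\Cone\{y_\omega\}$ has the same closure as the union over a subfamily of cardinality at most $|A|$. The reason is that the weight of $\bb R^A$ is $|A|$, so every subspace has density at most $|A|$ (density of a subspace is bounded by its weight). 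Choose a dense subset $D$ of size at most $|A|$ in $\bigcup_\omega\Cone\{y_\omega\}+\Span\{\E[A]\}$. Each point of $D$ lies in $\Cone\{y_\omega\}+\Span\{\E[A]\}$ for some $\omega$, so $D$ uses at most $|A|$ indices $\omega$. The union over these indices has the same closure, and (i) lets us pad the index set up to $A$.
\end{Ilist}
The density bound needs care and is the main obstacle in (ii). Here the closure is taken in the product topology, and $\bb R^A$ has a base of cardinality $|A|\cdot\aleph_0=|A|$, which is what gives the bound.

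For (iii), the inclusions follow from (i); strictness is the main work. It suffices to show $H(A,m)\neq H(A,m+1)$ for every finite $m$. Since $H(A,\bb N)\supseteq H(A,m)$ for all $m$, this also separates the last term. The plan has three steps.
\begin{Ilist}
\item Use three elements $a_1,a_2,a_3\in A$ and pick $m+1$ vectors $y_1,\ldots,y_{m+1}\in\Span\{e_{a_1},e_{a_2},e_{a_3}\}$. Their images in the quotient by $\E[A]$ should be pairwise non-proportional, for example points on a circle in the $2$-plane $\{\alpha_1+\alpha_2+\alpha_3=0\}$.
\item Consider $X\DE\bigcup_{i=1}^{m+1}\Cone\{y_i\}+\Span\{\E[A]\}$, which lies in $H(A,m+1)$. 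This set is a union of $m+1$ distinct half-planes sharing the boundary line $\Span\{\E[A]\}$.
\item Suppose $X=\bigcup_{i=1}^m\Cone\{z_i\}+\Span\{\E[A]\}$. This is a union of at most $m$ such half-planes, or the line itself when some $z_i$ is proportional to $\E[A]$. A half-plane with boundary $\Span\{\E[A]\}$ is determined by its image ray modulo $\E[A]$. Comparing the rays modulo $\E[A]$ on both sides, $m+1$ distinct rays cannot equal a set of at most $m$ rays, which gives a contradiction.
\end{Ilist}
Step (iii) requires $|A|\geq3$, so that the quotient $\bb R^A/\Span\{\E[A]\}$ has dimension at least $2$ and admits infinitely many distinct rays.
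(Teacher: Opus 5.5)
Your proposal is correct and follows the paper's route: (i) by re-indexing (you repeat vectors via a surjection $\Omega'\to\Omega$, where the paper pads with zero vectors via an injection $\Omega\to\Omega'$), (ii) via the weight bound $|A|$ for the product topology on $\bb R^A$, together with \Cref{S2} for $|A|\le 2$, and (iii) via \Cref{S2}(i). Your ray-counting in $\bb R^A/\Span\{\E[A]\}$ spells out the strictness argument that the paper leaves implicit; when you write elements of $H(A,m)$ and $H(A,m+1)$ without closure bars, cite \Cref{S2}(i) explicitly.
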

\begin{proof}
	Item (i) is easy to see. If $\lambda\DF\Omega\to\Omega'$ is injective and $(y_\omega)_{\omega\in\Omega}\in(\bb R^A)^\Omega$, 
	we set 
	\[
		y'_{\omega'}\DE
		\begin{cases}
			y_\omega \CAS \omega'=\lambda(\omega),
			\\
			0 \CASO.
		\end{cases}
	\]
	Then
	\[
		\ov{\bigcup_{\omega\in\Omega}\Cone\{y_\omega\}+\Span\{\E[A]\}}=
		\ov{\bigcup_{\omega'\in\Omega'}\Cone\{y'_{\omega'}\}+\Span\{\E[A]\}}.
	\]
	We come to the proof of (ii). Since $A$ is infinite, the product topology of $\bb R^A$ has a base of cardinality $|A|$. 
	Now assume we have $\Omega$ with $|\Omega|\geq|A|$. Then, by (i), $H(A,\Omega)\supseteq H(A,A)$. Given $H\in H(A,\Omega)$ we
	choose a set $\{y_a\DS a\in A\}\subseteq H$ that is dense in $H$. Since $H$ is closed, invariant under taking nonnegative
	multiples, and invariant under adding scalar multiples of $\E[A]$, we have 
	\[
		H=\ov{\bigcup_{a\in A}\Cone\{y_a\}+\Span\{\E[A]\}}.
	\]
	\Cref{S12} implies that $H\in H(A,A)$.
	Item (iii) follows from \Cref{S2}(i).
\end{proof}

\noindent
If $\Omega$ is infinite and $A$ has at least three elements the situation is more complex as in \Cref{S2}. Still
one can obtain an illustrative description provided that $A$ is finite.

\begin{lemma}
\label{S91}
	Assume that $3\leq|A|<\infty$ and $\Omega$ is infinite. Then there exists an order preserving bijection of $H(A,\Omega)$
	onto the set of all closed subsets of $S^{|A|-2}$.
\end{lemma}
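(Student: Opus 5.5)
We identify $H(A,\Omega)$ with closed sets in the quotient and then pass to the sphere. Write $n\DE|A|$. Every $H\in H(A,\Omega)$ is closed, invariant under nonnegative multiples, and invariant under adding multiples of $\E[A]$. So $H$ is determined by its image under the orthogonal projection $\pi$ of $\bb R^n$ onto $E\DE\{\E[A]\}^\perp\cong\bb R^{n-1}$. Indeed, $H=\pi^{-1}(\pi(H))$, and $\pi(H)=H\cap E$ is a closed cone in $E$.

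\textbf{Step 1: the candidate bijection.} Identify $S^{n-2}$ with the unit sphere of $E$ and define
\[
	\Theta(H)\DE H\cap E\cap S^{n-2}.
\]
This set is closed. Since $H\cap E$ is a closed cone, it equals $\Cone\Theta(H)\cup\{0\}$, and $0\in H$ always. Hence $H$ is recovered as
\[
	H=\pi^{-1}\big(\Cone\Theta(H)\cup\{0\}\big).
\]
So $\Theta$ is injective. It preserves order in both directions, because $H\subseteq H'$ if and only if $H\cap E\subseteq H'\cap E$. The empty set corresponds to $\Span\{\E[A]\}$, which lies in $H(A,\Omega)$ by \Cref{S83}(i).

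\textbf{Step 2: surjectivity.} Let $F\subseteq S^{n-2}$ be closed and nonempty. Since $\Omega$ is infinite, it suffices by \Cref{S84}(i) to work with $\Omega$ countable. Choose a countable dense subset $\{y_k\}$ of $F$; this is possible because $S^{n-2}$ is second countable. Index it by $\Omega$, padding with $y_\omega\DE0$ if needed. By \Cref{S12},
\[
	H\DE\ov{\bigcup_k\Cone\{y_k\}+\Span\{\E[A]\}}\in H(A,\Omega).
\]
It remains to show $\Theta(H)=F$.

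\textbf{Step 3: the key computation, and the main obstacle.} The delicate point is controlling the closure. Since $\pi$ is continuous and open, and the union is $\pi$-saturated, we get
\[
	H=\pi^{-1}\Big(\ov{\textstyle\bigcup_k\Cone\{y_k\}}\Big),
\]
with the closure taken in $E$. So we must show
\[
	\ov{\bigcup_k\Cone\{y_k\}}=\Cone F\cup\{0\}.
\]
For "$\supseteq$": each point of $\Cone F$ is a limit of points $\lambda y_{k_j}$. For "$\subseteq$": the set $\Cone F\cup\{0\}$ is closed, because $F$ is a compact subset of the sphere. Concretely, if $\lambda_jf_j\to x\neq0$ with $f_j\in F$, then $\lambda_j=|\lambda_jf_j|\to|x|$, and by compactness $f_j$ has a limit point in $F$. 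Intersecting with the sphere gives $\Theta(H)=F$, using $|y_k|=1$.

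For general $H$ in Step 1, the same compactness argument shows that $H\cap E$ is exactly the closed cone over $\Theta(H)$. Finally, the hypothesis $n\geq3$ is used only to make $S^{n-2}$ a genuine sphere of positive dimension. Otherwise the statement still holds formally, but infinitude of $\Omega$ is what allows arbitrary closed sets, since finite $\Omega$ only yields finite $F$ by \Cref{S2}(i).
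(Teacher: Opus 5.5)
Your proof is correct and follows essentially the paper's route. You intersect $H$ with a complement of $\Span\{\E[A]\}$ and with its unit sphere, and you recover $H$ as the union of nonnegative rays through that set plus $\Span\{\E[A]\}$. Surjectivity then comes from a countable dense subset via \Cref{S12}, together with the compactness argument showing that $\bigcup_{\lambda\geq 0}\lambda F+\Span\{\E[A]\}$ is closed. The only differences are minor: the paper uses the coordinate hyperplane $\{\alpha_n=0\}$ instead of $\{\E[A]\}^\perp$, and a sandwich argument instead of your open-map argument for the closure.

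One notational caution: in the paper $\Cone M$ denotes the convex cone generated by $M$, so you should write $\bigcup_{\lambda\geq 0}\lambda F$ instead of $\Cone F$, and likewise for $\Cone\Theta(H)$. Under the convex-cone reading your identities $H\cap E=\Cone\Theta(H)\cup\{0\}$ and $\ov{\bigcup_k\Cone\{y_k\}}=\Cone F\cup\{0\}$ would be false, although your actual argument only ever uses single rays.
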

\begin{proof}
	Set $n\DE|A|$ and $L\DE\{(\alpha_j)_{j=1}^n\in\bb R^n\DS \alpha_n=0\}$. For each homomorphism $\Phi$ the set $H(\Phi)$ is
	closed under addition of scalar multiples of $\E[n]$ and under multiplication with nonnegative constants. Thus we can recover
	$H(\Phi)$ from the set $L\cap S^{n-1}\cap H(\Phi)$ by means of the formula
	\[
		H(\Phi)=
		\begin{cases}
			\Big[\bigcup_{\lambda\geq 0}\lambda\big(L\cap S^{n-1}\cap H(\Phi)\big)\Big]+\Span\{\E[n]\} \CAS 
			L\cap S^{n-1}\cap H(\Phi)\neq\emptyset
			\\[2mm]
			\Span\{\E[n]\} \CAS L\cap S^{n-1}\cap H(\Phi)=\emptyset
		\end{cases}
	\]
	The map $\Upsilon\DF H(\Phi)\mapsto L\cap S^{n-1}\cap H(\Phi)$ is thus injective. We can identify $L$ with $\bb R^{n-1}$ and 
	$L\cap S^{n-1}$ with $S^{n-2}$. Hence, $\Upsilon$ is an injective and order preserving map of $H(A,\Omega)$ into the set of all
	closed subsets of $S^{n-2}$. 

	Let $H$ be a closed subset of $S^{n-2}$. If $H=\emptyset$, we have $H=\Upsilon(H(\Phi))$ when $\Phi\DF\mc CA\to\bb R^\Omega$ 
	is any constant map. Assume that $H\neq\emptyset$. Consider an element 
	\[
		a=(\alpha_j)_{j=1}^n\in\bigcup_{\lambda\geq 0}\lambda H+\Span\{\E[n]\}.
	\]
	If $a\notin\Span\{\E[n]\}$ then $a$ can be represented as 
	\[
		a=\|a-\alpha_n\E[n]\|\cdot\frac{a-\alpha_n\E[n]}{\|a-\alpha_n\E[n]\|}+\alpha_n\E[n],
	\]
	and it follows that $\frac{a-\alpha_n\E[n]}{\|a-\alpha_n\E[n]\|}\in H$. Since $H$ is closed it follows that 
	$\bigcup_{\lambda\geq 0}\lambda H+\Span\{\E[n]\}$ is closed.

	Choose a countable subset of $\Omega$ and enumerate it as $\omega_1,\omega_2,\ldots$. 
	Choose a countable dense subset of $H$ and enumerate it as $y_1,y_2,\ldots$. According to \Cref{S12} we have 
	\[
		\ov{\bigcup_{k=1}^\infty\Cone\{y_k\}+\Span\{\E[A]\}}\in H(A,\Omega).
	\]
	Clearly, 
	\[
		H\subseteq\ov{\bigcup_{k=1}^\infty\Cone\{y_k\}+\Span\{\E[A]\}},
	\]
	and
	\[
		\bigcup_{k=1}^\infty\Cone\{y_k\}+\Span\{\E[A]\}\subseteq
		\bigcup_{\lambda\geq 0}\lambda H+\Span\{\E[n]\},
	\]
	and we conclude 
	\[
		\ov{\bigcup_{k=1}^\infty\Cone\{y_k\}+\Span\{\E[A]\}}=\bigcup_{\lambda\geq 0}\lambda H+\Span\{\E[n]\}.
	\]
\end{proof}

\subsection{Relating $\ker\Phi$ to $H(\Phi)$}

The next statement is the main result of this section and provides the basis for all what follows. It is a comparative result and 
tells us how to get hands on $\ker\Phi$ using only the action of $\Phi$ on the basis $\{\{e_a\}\DS a\in A\}$ of $\mc CA$.

\begin{Theorem}
\label{S3}
	Let $A,\Omega,\Omega'\neq\emptyset$ and let $\Phi\in\HomCSL(\mc CA,\bb R^\Omega)$ and $\Phi'\in\HomCSL(\mc CA,\bb R^{\Omega'})$. 
	Then 
	\[
		\ker\Phi\subseteq\ker\Phi'
		\ \Leftrightarrow\ 
		H(\Phi)\supseteq H(\Phi')
	\]
\end{Theorem}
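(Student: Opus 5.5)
The plan is to make every coordinate of $\Phi$ explicit in terms of support functions, and then reduce both implications to a separation statement about support functions on $\bb R^A$.

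\emph{Step 1: formula for the coordinates.} Fix $\omega\in\Omega$ and put $y\DE h_\Phi(\omega)$. The map $K\mapsto\Phi(K)(\omega)$ is a convex semilattice homomorphism $\mc CA\to\bb R$. On singletons it is affine, so $\Phi(\{x\})(\omega)=\llceil y,x\rrfloor$ for $x\in\mc DA$. Every $K=\Conv M\in\mc CA$ with $M$ finite equals $\bigoplus_{x\in M}\{x\}$, because $\Conv M$ is the $\oplus$-join of the singletons $\{x\}$, $x\in M$, in $\Moewe{\mc DA}$. Since $\oplus$ in $\bb R$ is $\max$, this gives
\[
	\Phi(K)(\omega)=\max_{x\in M}\llceil y,x\rrfloor=\sigma(K,y),
\]
where the pairing makes sense because $K\subseteq\bb R^A_{\Fin}$. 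Elements of $\mc DA$ have coordinate sum $1$, so for $\lambda\geq 0$ and $\alpha\in\bb R$ we get $\sigma(K,\lambda y+\alpha\E[A])=\lambda\sigma(K,y)+\alpha$. Moreover, for fixed $K$ the map $z\mapsto\sigma(K,z)$ is a maximum of finitely many functions each depending on finitely many coordinates, hence continuous on $\bb R^A$. Together these show
\[
	\ker\Phi=\big\{(K,L)\in\mc CA^2\DSb \forall z\in H(\Phi)\DP \sigma(K,z)=\sigma(L,z)\big\}.
\]
With this description, the implication ``$\Leftarrow$'' is immediate.

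\emph{Step 2: the implication ``$\Rightarrow$''.} It suffices to show: if $z\in\bb R^A\setminus H(\Phi)$, then there are $K,L\in\mc CA$ with $\sigma(K,\Dummy)=\sigma(L,\Dummy)$ on $H(\Phi)$ but $\sigma(K,z)\neq\sigma(L,z)$. Then $(K,L)\in\ker\Phi$. Taking $z\in H(\Phi')\setminus H(\Phi)$, we also get $(K,L)\notin\ker\Phi'$ by Step 1 applied to $\Phi'$.

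First reduce to finitely many coordinates. Since $H(\Phi)$ is closed in the product topology, there are a finite $B\subseteq A$ and $\varepsilon>0$ such that the basic neighbourhood $\{w\DS |w_b-z_b|<\varepsilon,\ b\in B\}$ misses $H(\Phi)$. We may enlarge $B$ so that $|B|\geq 2$. We will take $K,L\subseteq\mc DB\subseteq\mc DA$, so that $\sigma(K,w)$ and $\sigma(L,w)$ depend only on the restriction $w_B$.

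Let $\pi$ denote the projection $w\mapsto w_B$, followed by the quotient by $\Span\{\E[B]\}$. The set $\pi(H(\Phi))$ is a cone. The point $z_B$ is not in $\Span\{\E[B]\}$, because $H(\Phi)\supseteq\Span\{\E[A]\}$. The aim is to find a closed neighbourhood $V$ of the ray $\bb R_{>0}\pi(z)$ in $\bb R^B/\Span\{\E[B]\}$ that is disjoint from $\pi(H(\Phi))$. This should follow from the invariance of $H(\Phi)$ under positive scaling and under adding multiples of $\E[A]$, combined with the $\varepsilon$-neighbourhood above.

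\emph{Step 3: the geometric construction.} This is the main obstacle. Work in the affine hull of the simplex $\mc DB$, identified via $\kappa$ with an open region of $\bb R^{|B|-1}$; linear functionals modulo $\E[B]$ then become ordinary directions there. Choose a polytope $L=\Conv(M\cup\{x\})$ inside the relative interior of $\mc DB$ with the following properties: $x$ is a vertex of $L$, the normal cone of $L$ at $x$ contains $\pi(z)$ in its interior, and that normal cone is contained in $V$. One way to obtain this is to take the vertices $M\cup\{x\}$ as a very fine finite set on a small sphere around the barycenter, with $x$ the point in direction $\pi(z)$. The normal cones at the vertices are then narrow.

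Put $K\DE\Conv M$. Then $\sigma(L,w)\neq\sigma(K,w)$ holds exactly when $x$ is the unique maximiser of $\llceil\Dummy,w\rrfloor$ on $L$. That happens only for $\pi(w)$ in the normal cone of $x$, so only for $\pi(w)\in V$, which excludes every $w\in H(\Phi)$. On the other hand, $\pi(z)$ lies in the interior of that normal cone, so $\sigma(L,z)>\sigma(K,z)$.

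I expect the care to be needed in two places. One is making the neighbourhood $V$ genuinely conic and compatible with the quotient by $\E[B]$ when $A$ is infinite. The other is verifying that the normal cone at $x$ can be made arbitrarily narrow while keeping $L$ inside $\mc DB$.
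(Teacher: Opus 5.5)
Your proof is correct. Step~1 is \Cref{S4} rewritten with support functions. Your ``$\Rightarrow$'' is the paper's argument in different clothing. The open set of $w$ on which $x$ is the strict maximiser over $M\cup\{x\}$ is exactly the set $C(M;x)$ of \Cref{S5}. Your pair $(\Conv M,\Conv(M\cup\{x\}))$ is the pair $(\Conv\{p_1,\ldots,p_s\},\Conv\{p_1,\ldots,p_s,p\})$ of \Cref{S8}, and your $\pi$ is the map $P_L\circ P$ used there.

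The paper obtains the narrow cone around $\pi(z)$ from \Cref{S7}: a simplicial cone cut out by $|B|-1$ strict inequalities. Its polytope therefore has only $|B|$ vertices (the barycentre plus $|B|-1$ nearby points) instead of a fine net on a sphere. Your sphere construction works too. If $M$ is an $\eta$-net of the sphere of radius $r$ with $\eta<r(1-\cos\theta)$, then for every direction $w$ at angle at least $\theta$ from $\pi(z)$, some point of $M$ gives a larger value of $\llceil w,\Dummy\rrfloor$ than $x$.

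The genuine difference is in ``$\Leftarrow$''. You describe the whole kernel, $\ker\Phi=\{(K,L)\DS\sigma(K,\Dummy)=\sigma(L,\Dummy)\text{ on }H(\Phi)\}$. The paper proves this only for the special pairs (\Cref{S6}) and then needs \Cref{S9}, which says these pairs generate $\ker\Phi$ as an equivalence relation. Your route makes \Cref{S9} unnecessary and gives ``$\Leftarrow$'' in one line. The paper's route in exchange records that kernels are generated by ``add one point'' pairs, a fact valid for homomorphisms into arbitrary convex semilattices.

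One phrasing in Step~2 needs repair. A closed neighbourhood of the ray $\bb R_{>0}\pi(z)$ contains $0$, and $0\in\pi(H(\Phi))$, so such a $V$ cannot be disjoint from $\pi(H(\Phi))$. Use instead the open cone $\bb R_{>0}\pi(U)$, where $U$ is your basic neighbourhood. It misses $\pi(H(\Phi))$ because membership in $U$ depends only on $w_B$ and $H(\Phi)$ is invariant under $w\mapsto\lambda w+\alpha\E[A]$ for $\lambda>0$. Then require only that the strict-maximiser set at $x$, which never contains $0$, lies in this open cone; that is all the argument uses. This also settles both places you flagged as needing care.
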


\noindent
In order to prove this theorem we present a series of lemmata. The first one explains the significance of the function $h_\Phi$. 

\begin{lemma}
\label{S4}
	Let $\Phi\in\HomCSL(\mc CA,\bb R^\Omega)$, $s\in\bb N$, and $p_1,\ldots,p_s\in\mc DA$. Then
	\[
		\forall\omega\in\Omega\DP
		\Phi\big(\Conv\{p_1,\ldots,p_s\}\big)(\omega)
		=\max\big\{\llceil h_\Phi(\omega),p_i\rrfloor\DS i\in\{1,\ldots,s\}\big\}.
	\]
\end{lemma}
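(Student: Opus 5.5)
The plan is to use the two operations of $\mc CA=\Moewe[21][-2][8.33]{\mc DA}$ separately. The set $\Conv\{p_1,\ldots,p_s\}$ splits as a semilattice join of singletons. The $\oplus$-part of $\Phi$ then becomes a pointwise maximum in $\bb R^\Omega$. The convex part on singletons becomes a linear functional given by $h_\Phi(\omega)$.

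First we handle singletons. For $p=(\alpha_a)_{a\in A}\in\mc DA$ with finite support $\{a_1,\ldots,a_k\}$, we write $p=\sum_{i=1}^k\alpha_{a_i}e_{a_i}$ as a convex combination in $\mc DA$. Since the operations $+_p$ on $\mc CA$ act elementwise on sets, we get
\[
	\{p\}=\sum_{i=1}^k\alpha_{a_i}\{e_{a_i}\}
\]
as an iterated convex combination in $\mc CA$, formed via the recursive $n$-ary operations of \Cref{S78}. Since $\Phi$ is in particular a convex algebra homomorphism, and the convex operations in $\bb R^\Omega$ are pointwise affine combinations, we obtain for every $\omega$
\[
	\Phi(\{p\})(\omega)=\sum_{i=1}^k\alpha_{a_i}\Phi(\{e_{a_i}\})(\omega)=\llceil h_\Phi(\omega),p\rrfloor .
\]
This step needs a short induction on $k$, using the recursive definition of $\sum p_ix_i$ and the fact that homomorphisms preserve it. The degenerate cases where a coefficient equals $1$ are covered by the projection axiom.

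Second we handle the join. In $\mc CA$ we have
\[
	\{p_1\}\oplus\cdots\oplus\{p_s\}=\Conv(\{p_1\}\cup\cdots\cup\{p_s\})=\Conv\{p_1,\ldots,p_s\},
\]
by definition of $\oplus$ in \Cref{S64} together with an easy induction, since $\Conv(\Conv M\cup N)=\Conv(M\cup N)$. Then $\Phi$ preserves $\oplus$, and $\oplus$ in $\bb R^\Omega$ is the pointwise maximum. This gives
\[
	\Phi(\Conv\{p_1,\ldots,p_s\})(\omega)=\max_i\Phi(\{p_i\})(\omega),
\]
and inserting the first step yields the claim.

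The only mildly delicate point is the first step, because $\mc DA$ may be infinite. This is harmless, since each $p$ has finite support, so only finitely many generators occur. One should also note that the case $s=0$ is excluded implicitly, because elements of $\mc CA$ are nonempty. I expect no real obstacle here. The main care is in correctly identifying the $n$-ary convex combination in $\mc CA$ of singletons with the singleton of the convex combination.
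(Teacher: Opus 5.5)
Your proposal is correct and follows the paper's proof: first $\Phi(\{p\})(\omega)=\llceil h_\Phi(\omega),p\rrfloor$ by writing $\{p\}$ as a convex combination of the $\{e_a\}$ in the support of $p$, then $\Conv\{p_1,\ldots,p_s\}=\bigoplus_{i=1}^s\{p_i\}$ together with the fact that $\oplus$ in $\bb R^\Omega$ is the pointwise maximum. Your extra remarks (the induction for the $n$-ary combinations and the identity $\Conv(\Conv M\cup N)=\Conv(M\cup N)$) only make explicit what the paper leaves implicit.
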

\begin{proof}
	Let $p\in\mc DA$ and write $p=(\alpha_a)_{a\in A}$. Then
	\begin{align*}
		\Phi\big(\{p\}\big)(\omega)= &\, 
		\Phi\Big(\Big\{\sum_{\substack{a\in A\\ \alpha_a\neq 0}}\alpha_ae_a\Big\}\Big)(\omega)
		=\Phi\Big(\sum_{\substack{a\in A\\ \alpha_a\neq 0}}\alpha_a\{e_a\}\Big)(\omega)
		\\
		= &\, 
		\sum_{\substack{a\in A\\ \alpha_a\neq 0}}\alpha_a\Phi\big(\{e_a\}\big)(\omega)=\llceil h_\Phi(\omega),p\rrfloor
		.
	\end{align*}
	If $p_1,\ldots,p_s\in\mc DA$, then 
	\begin{align*}
		\Phi\big(\Conv\{p_1,\ldots,p_s\}\big)= &\, \Phi\Big(\bigoplus_{i=1}^s\{p_i\}\Big)
		\\
		= &\, \bigoplus_{i=1}^s\Phi\big(\{p_i\}\big)=\max\big\{\Phi(\{p_i\})\DS i\in\{1,\ldots,s\}\big\},
	\end{align*}
	and the assertion follows. 
\end{proof}

\noindent
In order to relate $H(\Phi)$ to $\ker\Phi$ we use a man in the middle.

\begin{Definition}
\label{S5}
	Let $p_1,\ldots,p_s,p\in\bb R^A_{\Fin}$. Then we set 
	\[
		C(p_1,\ldots,p_s;p)\DE
		\big\{x\in\bb R^A\DS\forall j\in\{1,\ldots,s\}\DP\llceil x,p_j\rrfloor<\llceil x,p\rrfloor\big\}.
	\]
\end{Definition}

\noindent
We list some properties of the sets $C(p_1,\ldots,p_s;p)$.

\begin{Remark}
\label{S77}
	Let $p_1,\ldots,p_s,p\in\bb R^A_{\Fin}$. Then the following statements hold.
	\begin{Enumerate}
	\item \Dis{%
		\begin{aligned}[t]
			C(p_1,\ldots,p_s;p)= &\, 
			\big\{x\in\bb R^A\DS\forall y\in\Conv\{p_1,\ldots,p_s\}\DP\llceil x,y\rrfloor<\llceil x,p\rrfloor\big\}
			\\
			= &\, 
			\big\{x\in\bb R^A\DS\forall j\in\{1,\ldots,s\}\DP\llceil x,p_j-p\rrfloor<0\big\}.
		\end{aligned}
		}
	\item $0\notin C(p_1,\ldots,p_s,p)$.
	\item $C(p_1,\ldots,p_s,p)$ is open.
	\item \Dis{%
			\forall x\in\bb R^A,\lambda>0\DP 
			\Big(x\in C(p_1,\ldots,p_s,p)\Leftrightarrow \lambda x\in C(p_1,\ldots,p_s,p)\Big).
		}
	\item If $p_1,\ldots,p_s,p\in\mc DA$, then 
		\[
			\forall x\in\bb R^A,\alpha\in\bb R\DP 
			\Big(x\in C(p_1,\ldots,p_s,p)\Leftrightarrow x+\alpha\E[A]\in C(p_1,\ldots,p_s,p)\Big).
		\]
	\item \Dis{%
			C(p_1,\ldots,p_s;p)\cap\bb R^A_{\Fin}\neq\emptyset.
			\ \Leftrightarrow\ 
			C(p_1,\ldots,p_s;p)\neq\emptyset
			\ \Leftrightarrow\ 
			p\notin\Conv\{p_1,\ldots,p_s\}
		}
	\end{Enumerate}
	Only the last item requires an argument; it is seen by applying the Hahn-Banach separation theorem in the space $\bb R^A$.
\end{Remark}

\noindent
It is not difficult to relate certain elements of $\ker\Phi$ to $H(\Phi)$ using the sets $C(p_1,\ldots,p_s;p)$.

\begin{lemma}
\label{S6}
	Let $\Phi\in\HomCSL(\mc CA,\bb R^\Omega)$ and $p_1,\ldots,p_s,p\in\mc DA$. Then
	\begin{align*}
		\Phi\big(\Conv\{p_1,\ldots,p_s\}\big)=\Phi\big(\Conv\{p_1,\ldots,p_s,p\}\big)
		&\ \Leftrightarrow\ 
		\\
		C(p_1,\ldots,p_s;p)\cap h_\Phi(\Omega)=\emptyset
		&\ \Leftrightarrow\ 
		C(p_1,\ldots,p_s;p)\cap H(\Phi)=\emptyset.
	\end{align*}
\end{lemma}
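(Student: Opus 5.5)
The plan is to prove the two equivalences separately. Both reduce to \Cref{S4} together with the invariance properties of $C(p_1,\ldots,p_s;p)$ listed in \Cref{S77}.

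\emph{First equivalence.} Fix $\omega\in\Omega$ and apply \Cref{S4} to both sets:
\[
	\Phi\big(\Conv\{p_1,\ldots,p_s,p\}\big)(\omega)
	=\max\Big\{\max_{1\leq j\leq s}\llceil h_\Phi(\omega),p_j\rrfloor,\ \llceil h_\Phi(\omega),p\rrfloor\Big\}.
\]
This value exceeds $\Phi(\Conv\{p_1,\ldots,p_s\})(\omega)=\max_j\llceil h_\Phi(\omega),p_j\rrfloor$ exactly when $\llceil h_\Phi(\omega),p\rrfloor>\llceil h_\Phi(\omega),p_j\rrfloor$ for all $j$, that is, when $h_\Phi(\omega)\in C(p_1,\ldots,p_s;p)$. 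Otherwise the two values coincide. Since elements of $\bb R^\Omega$ are equal if and only if they agree at every $\omega$, the two images are equal if and only if $h_\Phi(\Omega)\cap C(p_1,\ldots,p_s;p)=\emptyset$.

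\emph{Second equivalence.} One direction is immediate: $h_\Phi(\Omega)\subseteq H(\Phi)$ (take $\lambda=1$, $\alpha=0$), so $C\cap H(\Phi)=\emptyset$ implies $C\cap h_\Phi(\Omega)=\emptyset$.

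For the converse, assume $C\cap h_\Phi(\Omega)=\emptyset$ and take $x\in C\cap H(\Phi)$. Because $C$ is open (\Cref{S77}(iii)) and $H(\Phi)$ is the closure of
\[
	\{\lambda h_\Phi(\omega)+\alpha\E[A]\DS \lambda\geq0,\ \omega\in\Omega,\ \alpha\in\bb R\},
\]
there is a point of this generating set in $C$, say $\lambda h_\Phi(\omega)+\alpha\E[A]\in C$. By \Cref{S77}(v), which applies since $p_1,\ldots,p_s,p\in\mc DA$, the point $\lambda h_\Phi(\omega)$ also lies in $C$. Now $\lambda\neq0$, because $0\notin C$ by \Cref{S77}(ii). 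Hence \Cref{S77}(iv) gives $h_\Phi(\omega)\in C$, a contradiction. So $C\cap H(\Phi)=\emptyset$.

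The only delicate point is the passage through the closure, and it is resolved by the openness of $C$. Openness holds in the product topology because each map $\llceil\Dummy,p_j-p\rrfloor$ involves only finitely many coordinates and is therefore continuous.
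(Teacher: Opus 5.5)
Your proof is correct and follows the paper's argument essentially step by step. You use \Cref{S4} pointwise in $\omega$ for the first equivalence. For the second, you use the inclusion $h_\Phi(\Omega)\subseteq H(\Phi)$ for the trivial direction, and the openness of $C(p_1,\ldots,p_s;p)$ together with properties (ii), (iv), (v) of \Cref{S77} to pass from $H(\Phi)$ back to $h_\Phi(\Omega)$ for the other.
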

\begin{proof}
	By \Cref{S4} we have 
	\begin{align*}
		\Phi\big(\Conv\{p_1,\ldots,p_s\}\big)(\omega)= &\, 
		\max\big\{\llceil h_\Phi(\omega),p_i\rrfloor\DS i=1,\ldots,s\},
		\\
		\Phi\big(\Conv\{p_1,\ldots,p_s,p\}\big)(\omega)= &\, 
		\max\Big(\big\{\llceil h_\Phi(\omega),p_i\rrfloor\DS i=1,\ldots,s\}\cup\{\llceil h_\Phi(\omega),p\rrfloor\}\Big).
	\end{align*}
	Thus
	\begin{align*}
		\Phi\big(\Conv\{p_1,\ldots,p_s\}\big)= \mkern2.5mu&\, \Phi\big(\Conv\{p_1,\ldots,p_s,p\}\big)
		\\
		\Leftrightarrow
		&\, \forall\omega\in\Omega\DQ\exists i\in\{1,\ldots,s\}\DP 
		\llceil h_\Phi(\omega),p_i\rrfloor\geq\llceil h_\Phi(\omega),p\rrfloor
		\\
		\Leftrightarrow
		&\, C(p_1,\ldots,p_s;p)\cap h_\Phi(\Omega)=\emptyset.
	\end{align*}
	The backwards implication in the second equivalence asserted in the lemma is trivial. 
	To show the forward implication assume that $C(p_1,\ldots,p_s;p)\cap H(\Phi)\neq\emptyset$. 
	We use the properties listed in \Cref{S77}. Since $C(p_1,\ldots,p_s;p)$ is open, it follows that 
	\[
		C(p_1,\ldots,p_s;p)\cap
		\big\{\lambda h_\Phi(\omega)+\alpha\E[A]\DS \lambda\geq 0,\omega\in\Omega,\alpha\in\bb R\big\}\neq\emptyset.
	\]
	Choose $\lambda\geq 0$, $\omega\in\Omega$, $\alpha\in\bb R$ such that 
	$\lambda h_\Phi(\omega)+\alpha\E[A]\in C(p_1,\ldots,p_s;p)$. Then also $\lambda h_\Phi(\omega)\in C(p_1,\ldots,p_s;p)$.
	Hence, $\lambda>0$, and it follows that $h_\Phi(\omega)\in C(p_1,\ldots,p_s;p)$.
\end{proof}

\noindent
It is a more involved fact that $H(\Phi)$ can be fully described using the sets $C(p_1,\ldots,p_s;p)$. 

\begin{Proposition}
\label{S8}
	Let $\Phi\in\HomCSL(\mc CA,\bb R^\Omega)$. Then 
	\begin{multline*}
		\bb R^A\setminus H(\Phi)=\bigcup\Big\{C(p_1,\ldots,p_{n-1};p)\,\Big|
		\\
		p_1,\ldots,p_{n-1},p\in\mc DA,C(p_1,\ldots,p_{n-1};p)\cap H(\Phi)=\emptyset\Big\}.
	\end{multline*}
\end{Proposition}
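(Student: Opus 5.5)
The inclusion ``$\supseteq$'' is immediate, since each set $C(p_1,\ldots,p_{n-1};p)$ in the union is disjoint from $H(\Phi)$. I read the number $n-1$ of points $p_j$ as a finite number that is allowed to depend on the point $x$ being covered. For ``$\subseteq$'' I fix $x\in\bb R^A\setminus H(\Phi)$ and construct points $p_1,\ldots,p_{s},p\in\mc DA$ such that $x\in C(p_1,\ldots,p_s;p)$ and $C(p_1,\ldots,p_s;p)\cap H(\Phi)=\emptyset$.

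\textbf{Step 1: a small box around $x$ misses $H(\Phi)$.} Since $H(\Phi)$ is closed in the product topology, there are a finite set $B\subseteq A$ and $\varepsilon>0$ such that
\[
U\DE\{z\in\bb R^A\DS |z_b-x_b|<\varepsilon\text{ for all }b\in B\}
\]
is disjoint from $H(\Phi)$. We may enlarge $B$ so that $|B|\geq 2$. The set $H(\Phi)$ is invariant under multiplication by $\lambda>0$ and under adding multiples of $\E[A]$. Hence the saturation $\{\lambda z+\alpha\E[A]\DS z\in U,\lambda>0,\alpha\in\bb R\}$ is also disjoint from $H(\Phi)$. It therefore suffices to find $p_j,p$ with $x\in C(p_1,\ldots,p_s;p)$ and $C(p_1,\ldots,p_s;p)$ contained in this saturation.

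\textbf{Step 2: reduce to the sum-zero subspace.} Let $W\DE\{w\in\bb R^B\DS \sum_b w_b=0\}$, and write $x|_B=c\,\E[B]+x_W$ with $x_W\in W$. I claim $x_W\neq 0$. Otherwise $x|_B$ is constant, so $x-c\E[A]\in U$. But $0\in\Span\{\E[A]\}\subseteq H(\Phi)$ shows that $U$ contains the point $x-c\E[A]$, which lies in $\Span\{\E[A]\}+\{z\DS z|_B=0\}$, and the element $c\,\E[A]$ agrees with $x$ on $B$ and belongs to $H(\Phi)$, so it lies in $U\cap H(\Phi)$, a contradiction.

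Set $u\DE x_W/\|x_W\|$, let $f_1,\ldots,f_k$ be an orthonormal basis of $u^\perp\cap W$, and choose a large $\delta>0$. Take the directions
\[
v_{k,\pm}\DE -u\pm\delta f_k\in W,
\]
all extended by $0$ outside $B$. Let $p$ be the barycenter of $\{e_b\DS b\in B\}$, and set $p_{k,\pm}\DE p+t\,v_{k,\pm}$ with $t>0$ small enough that these points lie in $\mc DA$. Then
\[
z\in C(\ldots;p)\ \Leftrightarrow\ \llceil z,v_{k,\pm}\rrfloor<0\ \text{for all }k,\pm
\ \Leftrightarrow\ \llceil z,u\rrfloor>\delta\,|\llceil z,f_k\rrfloor|\ \text{for all }k .
\]
This condition involves only the coordinates in $B$. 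In particular $x\in C$, since $\llceil x,u\rrfloor=\|x_W\|>0$ and $\llceil x,f_k\rrfloor=0$.

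\textbf{Step 3: $C$ lies in the saturation of $U$.} Given $z\in C$, first rescale with $\lambda>0$ so that $\llceil\lambda z,u\rrfloor=\|x_W\|$. Then $|\llceil\lambda z,f_k\rrfloor|<\|x_W\|/\delta$ for every $k$, so the $W$-component of $\lambda z|_B$ lies within $\sqrt{k}\,\|x_W\|/\delta$ of $x_W$. Next add a multiple of $\E[A]$ to match the $\E[B]$-component $c$. For $\delta$ large, the resulting vector lies in $U$.

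The main obstacle is the quantitative bookkeeping in Step 3: $\delta$ must be chosen, depending on $\varepsilon$, $\|x_W\|$ and $|B|$, so that the orthogonal error is below $\varepsilon$ in each coordinate. A second point to check is that $t$ can always be taken small enough that all $p_{k,\pm}$ lie in the simplex; this holds because $p$ is in its relative interior and every $v_{k,\pm}$ has coordinate sum zero.
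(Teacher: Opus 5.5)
Your argument follows the paper's proof. You take a finite box $U$ around $x$ missing $H(\Phi)$ and pass to the sum-zero subspace of the chosen coordinates. You then build a polyhedral cone around the projected point whose rays all pass through a small ball, so it lies in the saturation of $U$. Finally you realise it as $C(\ldots;p)$, with $p$ the barycenter of the $e_b$ and the $p_j$ small sum-zero perturbations of $p$.

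The only difference is the shape of the cone. The paper uses Lemma~\ref{S7} to get a simplicial cone with exactly $|B|-1$ linearly independent facet normals; this is where the $n-1$ in the statement comes from. It then argues disjointness by contradiction through $P_L\circ P$. You instead use the $2(|B|-2)$ explicit constraints $\llceil z,u\rrfloor>\delta|\llceil z,f_k\rrfloor|$. This turns Step~3 into a one-line estimate ($\delta\geq\sqrt{|B|-2}\,\|x_W\|/\varepsilon$ suffices), at the price of more points.

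There is, however, a degenerate case your construction misses. If $|B|=2$, then $W$ is one-dimensional and $u^\perp\cap W=\{0\}$. So there are no $f_k$ and no points $p_{k,\pm}$ at all, and you end up with $C(\,;p)=\bb R^A$, which meets $H(\Phi)$. Your displayed equivalence hides this: positivity of $\llceil z,u\rrfloor$ is forced by the constraints only when at least one $f_k$ exists. For $|A|\geq 3$ you can enlarge $B$ to $|B|\geq 3$, but for $|A|=2$ this is impossible.

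The repair is to add one more point $p_0=p-tu$, i.e.\ the constraint $\llceil z,u\rrfloor>0$. For $|B|=2$ the rescaling in Step~3 then lands exactly on $x_W$, and for $|B|\geq 3$ the extra constraint is harmless. For $|A|=1$ there is nothing to prove, since $H(\Phi)=\bb R^A$.

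Finally, your justification of $x_W\neq 0$ contains a false intermediate claim: $x-c\E[A]$ need not lie in $U$. The correct reason is the one you give last: if $x|_B=c\E[B]$, then $c\E[A]\in U\cap H(\Phi)$.
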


\noindent
The proof of this statement relies on an elementary geometric fact.

\begin{lemma}
\label{S7}
	Let $d\in\bb N$ and let $L$ be a linear space with $\dim L=d$. Moreover, let $(\Dummy,\Dummy)$ be a scalar
	product on $L$ and denote by $\|\Dummy\|$ the norm induced by $(\Dummy,\Dummy)$. 
	Assume we have $x_0\in L\setminus\{0\}$ and $\varepsilon>0$. Then there exist linearly independent vectors 
	$c_1,\ldots,c_d\in L$ such that, with the notation
	\begin{align*}
		& C\DE\big\{x\in L\DS\forall j\in\{1,\ldots,d\}\DP(x,c_j)<0\big\},
		\\
		& U^L_\varepsilon(x_0)\DE\big\{x\in L\DS \|x-x_0\|<\varepsilon\big\},
	\end{align*}
	it holds that 
	\begin{equation}
	\label{S14}
		x_0\in C\subseteq\big\{\lambda x\DS \lambda>0,x\in U^L_\varepsilon(x_0)\big\}.
	\end{equation}
\end{lemma}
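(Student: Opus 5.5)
The plan is to realise $C$ as the interior of a simplicial cone spanned by $d$ vectors lying close to $x_0$, and then to obtain the $c_j$ as a (negated) dual basis. Note first that $d\geq 1$, since $x_0\neq 0$. If $d=1$, we take $c_1\DE -x_0$. Then $C=\{\lambda x_0\DS\lambda>0\}$, and \cref{S14} holds trivially. So assume $d\geq 2$ from now on.

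\emph{Step 1: vertices near $x_0$.} Let $E\DE\{x\in L\DS (x,x_0)=0\}$; this subspace has dimension $d-1\geq 1$. Choose vectors $w_1,\ldots,w_d\in E$ that are the vertices of a nondegenerate simplex in $E$ with barycenter $0$. Concretely, the $w_j$ affinely span $E$ and satisfy $\sum_{j=1}^d w_j=0$; a regular simplex centred at the origin works. Pick $\delta>0$ with $\delta\|w_j\|<\varepsilon$ for all $j$, and set $v_j\DE x_0+\delta w_j$. These $v_j$ lie in the affine hyperplane $\{x\DS (x,x_0)=\|x_0\|^2\}$, which does not contain $0$. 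They are affinely independent there, so $v_1,\ldots,v_d$ are linearly independent and hence form a basis of $L$. Moreover $\frac1d\sum_{j=1}^d v_j=x_0$.

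\emph{Step 2: the dual basis.} Let $c_1,\ldots,c_d\in L$ be determined by $(v_i,c_j)=-\delta_{i,j}$; these exist and are linearly independent by the Riesz representation in the finite-dimensional space $L$. Write $x=\sum_i\mu_iv_i$. Then $(x,c_j)=-\mu_j$, so
\[
	C=\Big\{\sum_{i=1}^d\mu_iv_i\DSB \mu_1,\ldots,\mu_d>0\Big\}.
\]
In particular $x_0=\sum_{i=1}^d\frac1d v_i\in C$.

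\emph{Step 3: the inclusion.} Take $x=\sum_i\mu_iv_i\in C$ and put $\lambda\DE\sum_i\mu_i>0$ and $y\DE x/\lambda$. Then $y=\sum_i\frac{\mu_i}{\lambda}v_i$ is a convex combination of the $v_i$. By the triangle inequality,
\[
	\|y-x_0\|\leq\max_i\|v_i-x_0\|=\delta\max_i\|w_i\|<\varepsilon,
\]
so $y\in U^L_\varepsilon(x_0)$ and $x=\lambda y$. This gives \cref{S14}.

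The only point needing care is Step 1. We must ensure simultaneously that the $v_j$ are linearly independent (this is why they are placed in an affine hyperplane missing $0$) and that $x_0$ is their barycenter, so that it lies in the open cone. Once that is in place, Steps 2 and 3 are routine linear algebra.
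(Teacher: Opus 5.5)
Your proof is correct and follows essentially the same route as the paper: both realise $C$ as the open simplicial cone spanned by $d$ linearly independent vectors close to $x_0$, take the $c_j$ as the negated dual basis, and get the inclusion by normalising $x\in C$ to a convex combination of the generators. The only difference is the choice of generators: the paper reduces to $L=\bb R^d$, $x_0=\E[d]$ and uses $f_j=\E[d]+\rho e_j$, whereas your centred simplex in $x_0^\perp$ makes $x_0$ exactly the barycenter and gives linear independence without computation.
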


\noindent
For the sake of completeness we provide a proof.

\begin{proof}
	The assertion is invariant under applying unitary linear maps and positive multiples of the identity. 
	Hence, we may assume w.l.o.g.\ that 
	$L=\bb R^d$, that $(\Dummy,\Dummy)$ is the Euclidean scalar product, and that $x_0=\E[d]$.

	Let $\rho>0$ be a parameter (which will be specified later) and set 
	\[
		f_j\DE \E[d]+\rho e_j\text{ for }j\in\{1,\ldots,d\}.
	\]
	Then $\{f_1,\ldots,f_d\}$ is linearly independent: To see this, let $\lambda_1,\ldots,\lambda_d\in\bb R$ and
	compute, denoting $\mu\DE\sum_{j=1}^d\lambda_j$, 
	\[
		\sum_{j=1}^d\lambda_jf_j=\mu \E[d]+\sum_{j=1}^d\lambda_j\rho e_j=\sum_{j=1}^d[\mu+\rho\lambda_j]e_j.
	\]
	If this linear combination vanishes, then $\mu+\rho\lambda_j=0$ for all $j$. Summing over $j$ yields $\mu(d+\rho)=0$,
	hence $\mu=0$ and in turn $\lambda_j=0$ for all $j$. 

	Let $c_l$ be the unique solution of the system of linear equations
	\[
		(c_l,f_j)=
		\begin{cases}
			-1 \CAS l=j
			\\
			0 \CASO
		\end{cases}
	\]
	For any $\lambda_1,\ldots,\lambda_d\in\bb R$ it holds that 
	\[
		\Big(\sum_{j=1}^d\lambda_jc_j,f_l\Big)=-\lambda_l.
	\]
	In particular, $\{c_1,\ldots,c_d\}$ is linearly independent. 

	We check that for all sufficiently small values of $\rho$ the properties \cref{S14} hold.
	\begin{Ilist}
	\item 
		We have $\E[d]=\frac 1{d+\rho}\sum_{j=1}^df_j$, and hence 
		\[
			\forall l\in\{1,\ldots,d\}\DP (\E[d],c_l)=-\frac 1{d+\rho}.
		\]
		In particular, $\E[d]\in C$.
	\item 
		We have 
		\[
			\forall x\in\bb R^d\DP x=-\sum_{j=1}^d(x,c_j)f_j.
		\]
		In particular, $C\subseteq\Cone\{f_1,\ldots,f_d\}$. 

		We have $\lim_{\rho\to 0}f_j=\E[d]$ for all $j\in\{1,\ldots,d\}$. Hence, for all sufficiently small 
		$\rho>0$ all $f_j$ belong to $U^L_\varepsilon(\E[d])$. 
		Given $x\in C$, we set $\lambda\DE-\sum_{j=1}^d(x,c_j)$.
		Then $\lambda>0$ and 
		\[
			x=\lambda\cdot\sum_{j=1}^d\frac{-(x,c_j)}\lambda f_j.
		\]
		The sum on the right side is a convex combination of the $f_j$, and hence belongs to 
		$U^L_\varepsilon(\E[d])$. 
	\end{Ilist}
\end{proof}

\begin{proof}[Proof of \Cref{S8}]
	The inclusion ``$\supseteq$'' is obvious; we have to prove the reverse inclusion. 
	Assume we have $x_0\in\bb R^A\setminus H(\Phi)$. 
	Write $x_0=(\xi_{0,a})_{a\in A}$. Since $H(\Phi)$ is closed in the product topology, 
	we find $a_1,\ldots,a_n\in A$ and $\varepsilon>0$ such that the open neighbourhood 
	($\pi_a\DF\bb R^A\to\bb R$ denotes the projection on the $a$-th coordinate)
	\[
		U\DE\bigcap_{i=1}^n\pi_{a_i}^{-1}\big((\xi_{0,a_i}-\varepsilon,\xi_{0,a_i}+\varepsilon)\big)
	\]
	of $x_0$ satisfies $U\cap H(\Phi)=\emptyset$. 

	Denote by $P$ be the restriction map 
	\[
		P\DF\left\{
		\begin{array}{rcl}
			\bb R^A & \to & \bb R^n
			\\
			(\xi_a)_{a\in A} & \mapsto & (\xi_{a_i})_{i=1}^n
		\end{array}
		\right.
	\]
	and observe that 
	\begin{equation}
	\label{S75}
		\forall x,x'\in\bb R^A\DP\Big(Px=Px'\Rightarrow \big(x\in U\Leftrightarrow x'\in U\big)\Big).
	\end{equation}
	Moreover, note that $P\E[A]=\E[n]$. Set 
	\[
		L\DE\Big\{(\alpha_i)_{i=1}^n\in\bb R^n\DSB\sum_{i=1}^n\alpha_i=0\Big\}=\big\{\E[n]\big\}^\perp,
	\]
	and let $P_L\DF\bb R^n\to L$ be the orthogonal projection onto $L$. Explicitly, $P_L$ acts as 
	\[
		P_L\big((\alpha_i)_{i=1}^n\big)=(\alpha_i)_{i=1}^n-\Big(\frac 1n\sum_{i=1}^n\alpha_i\Big)\E[n]
		\qquad\text{for }(\alpha_i)_{i=1}^n\in\bb R^n.
	\]
	Consider now the linear space $L$ endowed with the scalar product inherited from $\bb R^n$. We have 
	$x_0\in U$ and $\Span\{\E[A]\}\cap U=\emptyset$, and hence \cref{S75} implies that $Px_0\notin\Span\{\E[n]\}$. 
	Thus $(P_L\circ P)(x_0)\in L\setminus\{0\}$.
	
	The set 
	\[
		P(U)=\prod_{i=1}^n(\xi_{0,a_i}-\varepsilon,\xi_{0,a_i}+\varepsilon)
	\]
	is open in $\bb R^n$, and $P_L$ is an open map. Hence, $(P_L\circ P)(U)$ is an open neighbourhood of $(P_L\circ P)(x_0)$
	in $L$, and we find $\varepsilon>0$ such that 
	\[
		U^L_\varepsilon\big((P_L\circ P)(x_0)\big)\subseteq(P_L\circ P)(U).
	\]
	\Cref{S7} provides us with elements $c_1,\ldots,c_{n-1}\in L$ such that, with the notation 
	\[
		C\DE\big\{y\in L\DS \forall j\in\{1,\ldots,n-1\}\DP (y,c_j)<0\big\},
	\]
	it holds that
	\begin{align*}
		(P_L\circ P)(x_0)\in C\subseteq &\, 
		\big\{\mu y\DS \mu>0,y\in U^L_\varepsilon\big((P_L\circ P)(x_0)\big)\big\}
		\\
		\subseteq &\, \big\{\mu y\DS \mu>0,y\in(P_L\circ P)(U)\big\}.
	\end{align*}
	Write $c_j=(\gamma_{j,i})_{i=1}^n$ and set 
	\[
		\eta_{j,a}\DE
		\begin{cases}
			\frac 1n+\rho\gamma_{j,i}\CAS a=a_i,
			\\
			0 \CASO,
		\end{cases}
		\qquad
		\eta_a\DE
		\begin{cases}
			\frac 1n\CAS a=a_i,
			\\
			0 \CASO,
		\end{cases}
	\]
	where $\rho>0$ is sufficiently small so that all $\eta_{j,a_i}$ are positive. Then 
	\[
		p_j\DE(\eta_{j,a})_{a\in A},p\DE(\eta_a)_{a\in A}\in\mc DA.
	\]
	For each $x=(\xi_a)_{a\in A}\in\bb R^A$ we have
	\begin{multline*}
		x\in C(p_1,\ldots,p_{n-1};p)
		\ \Leftrightarrow\ \forall j\in\{1,\ldots,n-1\}\DP
		\mkern-15mu\underbrace{\sum_{a\in A}\xi_a(\eta_{j,a}-\eta_a)}_{
		=\rho\sum_{i=1}^n\xi_{a_i}\gamma_{j,i}=\rho(Px,c_j)}\mkern-15mu<0
		\\[2mm]
		\Leftrightarrow\ \forall j\in\{1,\ldots,n-1\}\DP
		\mkern-40mu\underbrace{(Px,c_j)}_{{=(Px,P_Lc_j)=(P_LPx,c_j)}}\mkern-40mu<0
		\ \Leftrightarrow\ (P_L\circ P)(x)\in C.
	\end{multline*}
	In particular, $x_0\in C(p_1,\ldots,p_{n-1};p)$. 

	Now assume towards a contradiction that $C(p_1,\ldots,p_{n-1};p)\cap H(\Phi)\neq\emptyset$. 
	Since $C(p_1,\ldots,p_{n-1};p)$ is open and does not intersect $\Span\{\E[n]\}$, 
	we find $\lambda>0,\omega\in\Omega,\alpha\in\bb R$ such that 
	\[
		x\DE\lambda h_\Phi(\omega)+\alpha\E[A]\in C(p_1,\ldots,p_{n-1};p).
	\]
	This yields that $(P_L\circ P)(x)\in C$, and hence we find $\mu>0,z\in U$ such that 
	\[
		(P_L\circ P)(x)=\mu(P_L\circ P)(z).
	\]
	Since $P_L$ is the orthogonal projection with kernel $\Span\{\E[n]\}$, we find $\beta_x,\beta_z\in\bb R$ such that 
	\[
		(P_L\circ P)(x)=P(x-\beta_x\E[A]),\quad (P_L\circ P)(z)=P(z-\beta_z\E[A]).
	\]
	In turn we have 
	\begin{align*}
		Pz= &\, (P_L\circ P)(z)+\beta_zP\E[A]=\frac 1\mu(P_L\circ P)(x)+\beta_zP\E[A]
		\\
		= &\, P\Big(\frac 1\mu x-\frac 1\mu\beta_x\E[A]+\beta_z\E[A]\Big)
		=P\Big(\frac\lambda\mu h_\Phi(w)+\Big[\frac\alpha\mu-\frac{\beta_x}\mu+\beta_z\Big]\E[A]\Big).
	\end{align*}
	Now \cref{S75} yields $\frac\lambda\mu h_\Phi(\omega)+\big[\frac\alpha\mu-\frac{\beta_x}\mu+\beta_z\big]\E[A]\in U$. 
	This is a contradiction, since $U\cap H(\Phi)=\emptyset$. 
\end{proof}

\noindent
Finally, we note a simple and general algebraic fact.

\begin{lemma}
\label{S9}
	Let $\bb X$ be a convex semilattice and $\Phi\in\HomCSL(\mc CA,\bb X)$. 
	Then $\ker\Phi$ is the smallest equivalence relation containing the set 
	\begin{multline}
	\label{S10}
		\Big\{\big(\Conv\{p_1,\ldots,p_s\},\Conv\{p_1,\ldots,p_s,p\}\big)\DSB s\in\bb N,
		p_1,\ldots,p_s,p\in\mc DA,
		\\
		\Phi\big(\Conv\{p_1,\ldots,p_s\}\big)=\Phi\big(\Conv\{p_1,\ldots,p_s,p\}\big)\Big\}
		.
	\end{multline}
\end{lemma}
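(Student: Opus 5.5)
Let $R$ denote the set \cref{S10} and let $\sim$ be the smallest equivalence relation containing $R$. The plan is to show both inclusions between $\sim$ and $\ker\Phi$.

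The inclusion $\sim\subseteq\ker\Phi$ is immediate. By definition every pair in $R$ belongs to $\ker\Phi$, and $\ker\Phi$ is an equivalence relation, so it contains the smallest equivalence relation containing $R$.

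For the reverse inclusion, take $K=\Conv\{q_1,\ldots,q_t\}$ and $K'=\Conv\{q'_1,\ldots,q'_{t'}\}$ in $\mc CA$ with $\Phi(K)=\Phi(K')$. The key step is to pass through $K\oplus K'=\Conv\{q_1,\ldots,q_t,q'_1,\ldots,q'_{t'}\}$. Since $\Phi$ is a semilattice homomorphism,
\[
	\Phi(K\oplus K')=\Phi(K)\oplus\Phi(K')=\Phi(K).
\]
Now add the points $q'_1,\ldots,q'_{t'}$ to the generating list of $K$ one at a time. Set
\[
	K_0\DE K,\qquad K_k\DE\Conv\{q_1,\ldots,q_t,q'_1,\ldots,q'_k\}\quad\text{for }k\in\{1,\ldots,t'\},
\]
so that $K_{t'}=K\oplus K'$. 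Each $K_k$ lies between $K$ and $K\oplus K'$ in the semilattice order, and $\Phi$ is monotone for the order induced by $\oplus$. Hence $\Phi(K)\leq\Phi(K_k)\leq\Phi(K\oplus K')=\Phi(K)$, which gives $\Phi(K_{k-1})=\Phi(K_k)$ for every $k$. Each pair $(K_{k-1},K_k)$ has exactly the shape of an element of \cref{S10}, with $p_1,\ldots,p_s$ being the current generators and $p=q'_k$. Therefore $K\sim K\oplus K'$ by transitivity. Applying the same argument with the roles of $K$ and $K'$ exchanged gives $K'\sim K\oplus K'$, and symmetry then yields $K\sim K'$.

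The only point requiring care is the monotonicity step. It uses that $\Phi(K)\oplus\Phi(K_k)=\Phi(K\oplus K_k)=\Phi(K_k)$, that $\Phi(K_k)\oplus\Phi(K\oplus K')=\Phi(K\oplus K')$, and antisymmetry of the partial order $\leq$ defined by $x\leq y\DI x\oplus y=y$. A minor bookkeeping issue is that every element of $\mc CA$ is the convex hull of some finite list, so each $K_k$ is indeed of the form $\Conv\{p_1,\ldots,p_s\}$. Repeated points in the list cause no problem.
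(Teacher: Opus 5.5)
Your proof is correct and follows essentially the same route as the paper: both pass through $K\oplus K'$, adjoin the generators of the other set one at a time, and use that $\Phi$ is constant on every element of $\mathcal{C}A$ sandwiched between $K$ (resp.\ $K'$) and $K\oplus K'$. The only difference is that you spell out the monotonicity and antisymmetry argument for that sandwich step, which the paper leaves implicit.
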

\begin{proof}
	The equivalence relation generated by \cref{S10} is clearly contained in the kernel of $\Phi$. We have to prove the
	reverse inclusion. Let $(B,B')\in\ker\Phi$, then 
	\[
		\Phi(B\oplus B')=\Phi(B)\oplus\Phi(B')=\Phi(B)=\Phi(B')
		.
	\]
	Thus 
	\[
		\forall C\in\mc CA\DP\Big(B\subseteq C\subseteq B\oplus B'\vee B'\subseteq C\subseteq B\oplus B'\Big)
		\Rightarrow\Phi(C)=\Phi(B\oplus B').
	\]
	Now write $B=\Conv\{p_1,\ldots,p_s\}$ and $B'=\Conv\{q_1,\ldots,q_r\}$. Then each of the pairs 
	\begin{align*}
		& \big(\Conv\{p_1,\ldots,p_s,q_1,\ldots,q_{k-1}\},\Conv\{p_1,\ldots,p_s,q_1,\ldots,q_k\}\big)
		\text{ for }k=1,\ldots,r,
		\\
		& \big(\Conv\{q_1,\ldots,q_r,p_1,\ldots,p_{l-1}\},\Conv\{q_1,\ldots,q_r,p_1,\ldots,p_l\}\big)
		\text{ for }l=1,\ldots,s,
	\end{align*}
	belongs to the set \cref{S10}.
\end{proof}

\noindent
The proof of \Cref{S3} is now easily obtained by plugging together the above lemmata.

\begin{proof}[Proof of \Cref{S3}]
	Assume that $H(\Phi)\supseteq H(\Phi')$. Let $p_1,\ldots,p_s,p\in\mc DA$ with 
	$\Phi(\Conv\{p_1,\ldots,p_s\})=\Phi(\Conv\{p_1,\ldots,p_s,p\})$. Then, by \Cref{S6}, we have 
	$C(p_1,\ldots,p_s;p)\cap H(\Phi)=\emptyset$, and hence also $C(p_1,\ldots,p_s;p)\cap H(\Phi')=\emptyset$.
	Referring again to \Cref{S6} we obtain 
	\[
		\big(\Conv\{p_1,\ldots,p_s\},\Conv\{p_1,\ldots,p_s,p\}\big)\in\ker\Phi'.
	\]
	Now \Cref{S9} yields $\ker\Phi\subseteq\ker\Phi'$.

	Assume that $\ker\Phi\subseteq\ker\Phi'$. Let $x\in\bb R^A\setminus H(\Phi)$. By \Cref{S8} we
	find $p_1,\ldots,p_s,p\in\mc DA$ with 
	\[
		x\in C(p_1,\ldots,p_s;p)\subseteq\bb R^A\setminus H(\Phi).
	\]
	\Cref{S6} yields 
	\[
		\big(\Conv\{p_1,\ldots,p_s\},\Conv\{p_1,\ldots,p_s,p\}\big)\in\ker\Phi.
	\]
	It follows that this pair also belongs to $\ker\Phi'$, and again by \Cref{S6} we obtain 
	$C(p_1,\ldots,p_s;p)\cap H(\Phi')=\emptyset$. In particular, $x\notin H(\Phi')$. 
	We see that $\bb R^n\setminus H(\Phi)\subseteq\bb R^n\setminus H(\Phi')$, i.e., $H(\Phi)\supseteq H(\Phi')$.
\end{proof}

\subsection{Lifting \Cref{S3} to the class $\mc W$}

We present a more flexibel variant of the notion $H(\Phi)$ and of \Cref{S3}.

\begin{Definition}
\label{S80}
	Denote by $\mc W$ the class of all convex semilattices $\bb X$ that are isomorphic to a subalgebra of some power $\bb R^\Omega$
	where $\Omega\neq\emptyset$.
\end{Definition}

\noindent
The property that a convex semilattice belongs to $\mc W$ could be formulated differently: we have 
\begin{align*}
	\bb X\in\mc W\ \Leftrightarrow\ &\, \exists\Omega\neq\emptyset\DP\HomCSL(\bb X,\bb R^\Omega)\text{ contains an injective map}
	\\
	\Leftrightarrow\ &\, \HomCSL(\bb X,\bb R)\text{ is point separating}.
\end{align*}
Here the condition that $\HomCSL(\bb X,\bb R)$ is \emph{point separating} means that
\[
	\forall x,y\in X,x\neq y\DQ\exists f\in\HomCSL(\bb X,\bb R)\DP f(x)\neq f(y).
\]
\Cref{S3} has the following immediate consequence.

\begin{corollary}
\label{S26}
	Let $A\neq\emptyset$, let $\bb X\in\mc W$, and let $\Phi\in\HomCSL(\mc CA,\bb X)$. 
	If $\Omega$ and $\Omega'$ are nonempty sets, and $\tau\in\HomCSL(\bb X,\bb R^\Omega)$ and 
	$\tau'\in\HomCSL(\bb X,\bb R^{\Omega'})$ are both injective, then 
	\[
		H(\tau\circ\Phi)=H(\tau'\circ\Phi).
	\]
\end{corollary}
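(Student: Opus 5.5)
The plan is to apply \Cref{S3} twice, once in each direction. The key observation is that $\tau\circ\Phi\in\HomCSL(\mc CA,\bb R^\Omega)$ and $\tau'\circ\Phi\in\HomCSL(\mc CA,\bb R^{\Omega'})$ are homomorphisms of the kind covered by \Cref{S3}. The sets $H(\tau\circ\Phi)$ and $H(\tau'\circ\Phi)$ are therefore defined by \Cref{S1}, and we only need to compare their kernels.

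First, I will show that $\ker(\tau\circ\Phi)=\ker\Phi=\ker(\tau'\circ\Phi)$. For $B,B'\in\mc CA$ we have $(\tau\circ\Phi)(B)=(\tau\circ\Phi)(B')$ if and only if $\Phi(B)=\Phi(B')$, since $\tau$ is injective. The same holds for $\tau'$. Hence the two kernels coincide as subsets of $\mc CA\times\mc CA$.

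Next, I will apply \Cref{S3} with $\tau\circ\Phi$ in the role of $\Phi$ and $\tau'\circ\Phi$ in the role of $\Phi'$. The inclusion $\ker(\tau\circ\Phi)\subseteq\ker(\tau'\circ\Phi)$ gives $H(\tau\circ\Phi)\supseteq H(\tau'\circ\Phi)$. Swapping the roles, the reverse kernel inclusion gives $H(\tau'\circ\Phi)\supseteq H(\tau\circ\Phi)$, and the two inclusions together give equality.

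There is no real obstacle here. The only point to check is that \Cref{S3} allows different index sets $\Omega$ and $\Omega'$, which its statement explicitly does.
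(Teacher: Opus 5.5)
Your proposal is correct and follows the paper's proof: injectivity of $\tau$ and $\tau'$ gives $\ker(\tau\circ\Phi)=\ker\Phi=\ker(\tau'\circ\Phi)$, and \Cref{S3} then yields $H(\tau\circ\Phi)=H(\tau'\circ\Phi)$. You spell out the two applications of \Cref{S3} (one for each inclusion), which the paper leaves implicit.
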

\begin{proof}
	Since $\tau$ and $\tau'$ are injective, we have 
	\[
		\ker(\tau\circ\Phi)=\ker\Phi=\ker(\tau'\circ\Phi).
	\]
	Now apply \Cref{S3}.
\end{proof}

\noindent
This justifies the following definition.

\begin{Definition}
\label{S27}
	Let $A\neq\emptyset$, let $\bb X\in\mc W$, and let $\Phi\in\HomCSL(\mc CA,\bb X)$. 
	Choose $\Omega\neq\emptyset$ and an injective map $\tau\in\HomCSL(\bb X,\bb R^\Omega)$, and set 
	\[
		{\sf H}(\Phi)\DE H(\tau\circ\Phi).
	\]
\end{Definition}

\noindent
The benefit of working in this slightly more general setting is the gain in flexibility that stems from the 
freedom of the choice of $\tau$. 

Clearly, the analogue of \Cref{S3} in this setting holds.

\begin{theorem}
\label{S30}
	Let $A\neq\emptyset$, let $\bb X,\bb X'\in\mc W$, and let $\Phi\in\HomCSL(\mc CA,\bb X)$ and 
	$\Phi'\in\HomCSL(\mc CA,\bb X')$. Then 
	\[
		\ker\Phi\subseteq\ker\Phi'\ \Leftrightarrow\ {\sf H}(\Phi)\supseteq {\sf H}(\Phi').
	\]
\end{theorem}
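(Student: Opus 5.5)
The plan is to reduce the statement directly to \Cref{S3} by composing with injective embeddings. Since $\bb X,\bb X'\in\mc W$, choose nonempty sets $\Omega,\Omega'$ and injective homomorphisms $\tau\in\HomCSL(\bb X,\bb R^\Omega)$ and $\tau'\in\HomCSL(\bb X',\bb R^{\Omega'})$. By \Cref{S27} we then have ${\sf H}(\Phi)=H(\tau\circ\Phi)$ and ${\sf H}(\Phi')=H(\tau'\circ\Phi')$. \Cref{S26} guarantees that these sets do not depend on the particular choice of $\tau$ and $\tau'$, so they may be fixed once and for all.

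The first step is to observe that injectivity of $\tau$ gives $\ker(\tau\circ\Phi)=\ker\Phi$. Indeed, for $B,B'\in\mc CA$ we have $\tau(\Phi(B))=\tau(\Phi(B'))$ if and only if $\Phi(B)=\Phi(B')$. In the same way, $\ker(\tau'\circ\Phi')=\ker\Phi'$.

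The second step is to apply \Cref{S3} to the two homomorphisms $\tau\circ\Phi\in\HomCSL(\mc CA,\bb R^\Omega)$ and $\tau'\circ\Phi'\in\HomCSL(\mc CA,\bb R^{\Omega'})$. This yields
\[
	\ker\Phi=\ker(\tau\circ\Phi)\subseteq\ker(\tau'\circ\Phi')=\ker\Phi'
	\ \Leftrightarrow\
	H(\tau\circ\Phi)\supseteq H(\tau'\circ\Phi'),
\]
and the right-hand side is exactly ${\sf H}(\Phi)\supseteq{\sf H}(\Phi')$.

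There is no genuine obstacle. The only point needing care is that ${\sf H}$ is well defined, and that is precisely the content of \Cref{S26}.
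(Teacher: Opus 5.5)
Your proposal is correct and follows the paper's proof essentially verbatim. Like the paper, you fix injective embeddings $\tau,\tau'$ into powers of $\bb R$, use injectivity to get $\ker(\tau\circ\Phi)=\ker\Phi$ and $\ker(\tau'\circ\Phi')=\ker\Phi'$, and then apply \Cref{S3} together with the definition ${\sf H}(\Phi)=H(\tau\circ\Phi)$.
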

\begin{proof}
	Choose $\Omega$ and $\Omega'$ and injective homomorphisms $\tau\in\HomCSL(\bb X,\bb R^\Omega)$ and 
	$\tau'\in\HomCSL(\bb X',\bb R^{\Omega'})$. By \Cref{S3} it holds that
	\[
		\ker(\tau\circ\Phi)\subseteq\ker(\tau'\circ\Phi')
		\ \Leftrightarrow\ 
		H(\tau\circ\Phi)\supseteq H(\tau'\circ\Phi').
	\]
	Since $\tau$ and $\tau'$ are both injective, we have
	\[
		\ker\Phi=\ker(\tau\circ\Phi),\quad \ker\Phi'=\ker(\tau'\circ\Phi'),
	\]
	and by definition ${\sf H}(\Phi)=H(\tau\circ\Phi)$ and ${\sf H}(\Phi')=H(\tau'\circ\Phi')$.
\end{proof}

\noindent
We obtain a useful criterion for existence of factorisations of homomorphisms. 

\begin{corollary}
\label{S31}
	Let $A\neq\emptyset$, and $\bb X,\bb Y\in\mc W$.
	Assume we have homomorphisms $\Lambda\in\HomCSL(\mc CA,\bb X)$ and $\Phi\in\HomCSL(\mc CA,\bb Y)$ where $\Lambda$
	is surjective. 
	\begin{Enumerate}
	\item There exists a homomorphism $\Psi\in\HomCSL(\bb X,\bb Y)$ with $\Phi=\Psi\circ\Lambda$
		\begin{equation}
		\label{S76}
			\begin{tikzcd}[column sep=large]
				\mc CA \arrow[r,two heads,"\Lambda"] \arrow[rd,swap,"\Phi"]
				& \bb X \arrow[d,dashed,"\Psi"]
				\\
				& \bb Y
			\end{tikzcd}
		\end{equation}
		if and only if ${\sf H}(\Lambda)\supseteq {\sf H}(\Phi)$. 
	\item There exists an injective homomorphism $\Psi\in\HomCSL(\bb X,\bb Y)$ with \cref{S76}, if and only if 
		${\sf H}(\Lambda)={\sf H}(\Phi)$. 
	\end{Enumerate}
\end{corollary}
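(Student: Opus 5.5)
The plan is to reduce both items to \Cref{S30} together with the elementary homomorphism theorem for algebras in an equational class. The key observation is that, since $\Lambda$ is surjective, a homomorphism $\Psi$ with $\Phi=\Psi\circ\Lambda$ exists if and only if $\ker\Lambda\subseteq\ker\Phi$. Moreover, such a $\Psi$ is automatically unique.

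For (i), first assume $\Psi$ exists. If $(B,B')\in\ker\Lambda$, then $\Phi(B)=\Psi(\Lambda(B))=\Psi(\Lambda(B'))=\Phi(B')$, so $\ker\Lambda\subseteq\ker\Phi$. \Cref{S30}, applied with $\bb X$ and $\bb X'\DE\bb Y$ (both in $\mc W$), then yields ${\sf H}(\Lambda)\supseteq{\sf H}(\Phi)$.

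Conversely, assume ${\sf H}(\Lambda)\supseteq{\sf H}(\Phi)$. \Cref{S30} gives $\ker\Lambda\subseteq\ker\Phi$. Define $\Psi\DF X\to Y$ by $\Psi(x)\DE\Phi(B)$, where $B\in\mc CA$ is any element with $\Lambda(B)=x$; such $B$ exists by surjectivity. This is well defined precisely because of the kernel inclusion. To check that $\Psi$ is a homomorphism, let $x=\Lambda(B)$ and $y=\Lambda(B')$. Then $x+_py=\Lambda(B+_pB')$ and $x\oplus y=\Lambda(B\oplus B')$. Applying $\Phi$ and using that $\Phi$ is a homomorphism gives $\Psi(x+_py)=\Psi(x)+_p\Psi(y)$ and $\Psi(x\oplus y)=\Psi(x)\oplus\Psi(y)$. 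By construction $\Phi=\Psi\circ\Lambda$.

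For (ii), the first step is to show that when $\Psi$ exists, $\Psi$ is injective if and only if $\ker\Phi\subseteq\ker\Lambda$. Suppose $\Psi$ is injective and $\Phi(B)=\Phi(B')$. Then $\Psi(\Lambda(B))=\Psi(\Lambda(B'))$, hence $\Lambda(B)=\Lambda(B')$. Suppose instead $\ker\Phi\subseteq\ker\Lambda$ and $\Psi(x)=\Psi(y)$. Write $x=\Lambda(B)$ and $y=\Lambda(B')$. Then $\Phi(B)=\Phi(B')$, so $(B,B')\in\ker\Lambda$, that is, $x=y$.

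With this in hand, an injective $\Psi$ satisfying \cref{S76} exists if and only if $\ker\Lambda=\ker\Phi$. Applying \Cref{S30} in both directions, this holds if and only if ${\sf H}(\Lambda)={\sf H}(\Phi)$. For the forward direction, note that existence of $\Psi$ already gives the inclusion ${\sf H}(\Lambda)\supseteq{\sf H}(\Phi)$ by (i), and injectivity gives the reverse inclusion.

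No step is a genuine obstacle, since \Cref{S30} carries all the weight. The only points needing care are the well-definedness of $\Psi$ (which uses surjectivity of $\Lambda$ and the kernel inclusion) and the observation that \Cref{S30} may be applied with the two different targets $\bb X$ and $\bb Y$.
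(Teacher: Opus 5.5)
Your proposal is correct and follows essentially the same route as the paper: since $\Lambda$ is surjective, $\Psi$ exists if and only if $\ker\Lambda\subseteq\ker\Phi$, and $\Psi$ is injective if and only if equality of kernels holds. Then \Cref{S30} translates these kernel conditions into the inclusion, respectively equality, of the sets ${\sf H}(\Lambda)$ and ${\sf H}(\Phi)$; you simply write out the elementary homomorphism-theorem details that the paper leaves implicit.
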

\begin{proof}
	Since $\Lambda$ is surjective, $\Psi$ exists if and only if $\ker\Lambda\subseteq\ker\Phi$, and $\Psi$ will be injective
	if and only if equality holds. Now apply \Cref{S30}.
\end{proof}

\section{Construction of homomorphisms}

In this section we deal with homomorphisms between free finitely generated convex semilattices, and consider the set 
\[
	\mc H(n,k)\DE\big\{{\sf H}(\Phi)\DSb\Phi\in\HomCSL(\mc Cn,\mc Ck)\big\},
\]
where $n,k\in\bb N\setminus\{0\}$. Our aim is to find an explicit description of $\mc H(n,k)$. 

To start with we observe two simple properties.
\begin{Ilist}
\item We have
	\[
		\forall n,k\in\bb N\setminus\{0\}\DP \mc H(n,k)\subseteq\mc H(n,k+1)
	\]
	This follows since $\mc Ck$ can be embedded in $\mc C(k+1)$.
\item We have 
	\[
		\Span\{\E[n]\}\in\mc H(n,k)\subseteq H(n,\bb R^k).
	\]
	This follows since we always have constant maps as homomorphisms, and since we can compose with the support function
	representation to compute ${\sf H}(\Phi)$. 
\end{Ilist}
For $n\in\{1,2\}$ or $k\in\{1,2\}$ the set $\mc H(n,k)$ can be determined easily.

\begin{lemma}
\label{S55}
	\phantom{}
	\begin{Enumerate}
	\item \Dis{%
		\forall n,k\in\bb N\setminus\{0\}\DP 
		\big(n=1\vee k=1\big)\ \Rightarrow\ \mc H(n,k)=\big\{\Span\{\E[n]\}\big\}
		}
	\item \Dis{%
		\forall n\in\bb N\setminus\{0\}\DP \mc H(n,2)=H(n,2)
		}
	\item \phantom{}\\[-15mm]
		\begin{multline*}
		\mkern-14mu\forall n,k\in\bb N\setminus\{0,1\}\DP\big(n=2\vee k=2\big)\ \Rightarrow\ 
		\\
		\mc H(n,k)=\Big\{\big(\Cone\{y_1\}\cup\Cone\{y_2\}\big)+\Span\{\E[n]\}\DS y_1,y_2\in\bb R^n\Big\}
		\end{multline*}
	\end{Enumerate}
\end{lemma}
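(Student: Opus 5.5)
For (i) I would only use constant maps. If $n=1$, then $\mc C1$ has exactly one element, so every $\Phi\in\HomCSL(\mc C1,\mc Ck)$ is constant. If $k=1$, then $\mc C1$ is a singleton and again every $\Phi$ is constant. In either case, choose an injective $\tau$ as in \Cref{S27}; then $\tau\circ\Phi$ is a constant map into some $\bb R^\Omega$. By \Cref{S83}(i) this gives ${\sf H}(\Phi)=\Span\{\E[n]\}$. Conversely, constant maps always exist, so $\mc H(n,k)=\{\Span\{\E[n]\}\}$.

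For (ii) I would compute ${\sf H}$ using the injective homomorphism $\tau\DF\mc C2\to\bb R^2$ from \cref{S73} (\Cref{S72}(i)). Then ${\sf H}(\Phi)=H(\tau\circ\Phi)$ with $\tau\circ\Phi\in\HomCSL(\mc Cn,\bb R^2)$, which gives $\mc H(n,2)\subseteq H(n,2)$.

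For the reverse inclusion, take $\Phi'\in\HomCSL(\mc Cn,\bb R^2)$. Its values $\Phi'(\{e_j\})$, $j=1,\ldots,n$, form a finite set. By \Cref{S68}, $\tau(\mc C2)=\{\binom uv\DS 0\leq v\leq 1,\ -v\leq u\leq 0\}$, a triangle with nonempty interior. Hence there is an automorphism $T=T_c\circ M_\lambda$ of $\bb R^2$ (\Cref{S63}) mapping these finitely many points into $\tau(\mc C2)$. Freeness then gives $\Phi\in\HomCSL(\mc Cn,\mc C2)$ with $\Phi(\{e_j\})\DE\tau^{-1}\big(T(\Phi'(\{e_j\}))\big)$. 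The maps $\tau\circ\Phi$ and $T\circ\Phi'$ are homomorphisms that agree on the generators, so they coincide. Since $T$ is bijective, $\ker(\tau\circ\Phi)=\ker\Phi'$, and \Cref{S3} yields ${\sf H}(\Phi)=H(T\circ\Phi')=H(\Phi')$. This proves $H(n,2)\subseteq\mc H(n,2)$.

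For (iii), the case $k=2$ is (ii) combined with \Cref{S2}(i) for $m=2$. For $n=2$ and $k\geq 2$, I would first bound $\mc H(2,k)$ from above: $\mc H(2,k)\subseteq H(2,\Omega)$ for any $\Omega$ admitting an injective $\tau\DF\mc Ck\to\bb R^\Omega$. By \Cref{S2}(iii),(iv), $H(2,\Omega)$ consists of at most the four sets listed there. Each of them has the form $(\Cone\{y_1\}\cup\Cone\{y_2\})+\Span\{\E[2]\}$: take $y_1=y_2=0$; or $y_1=y_2=e_2$; or $y_1=y_2=e_1$; or $y_1=e_1$, $y_2=e_2$. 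Conversely, every set of that form lies in $H(2,2)$ by \Cref{S2}(i), hence in $\mc H(2,2)$ by (ii), hence in $\mc H(2,k)$ by the monotonicity $\mc H(n,k)\subseteq\mc H(n,k+1)$.

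The only genuinely non-formal step is the realisation argument in (ii). It requires checking that rescaling and translating preserves the kernel, and hence the set $H$, and that $\tau(\mc C2)$ indeed contains an open set.
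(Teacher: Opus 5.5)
Your proof is correct and follows the paper's argument essentially step by step. You use constant maps for (i), the embedding $\tau$ of $\mc C2$ into $\bb R^2$ together with a translation and rescaling for (ii), and the list of at most four sets from \Cref{S2} combined with monotonicity in $k$ for (iii). The only difference is cosmetic and lies in (ii): you move just the generator images into $\tau(\mc C2)$ and obtain $\Phi$ by freeness, so you never need the boundedness of $\Phi'(\mc Cn)$, which the paper derives from boxes being subalgebras. You also identify the $H$-sets via \Cref{S3}, whereas the paper uses the freedom in choosing the embedding in \Cref{S27}.
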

\begin{proof}
	The sets $\HomCSL(\mc Cn,\mc C1)$ and $\HomCSL(\mc C1,\mc Ck)$ contain only constant maps, and \Cref{S83}(i) yields the assertion
	in item (i).

	The inclusion ``$\subseteq$'' in (ii) holds since $\mc C2$ can be embedded into $\bb R^2$ using the map $\tau$ from \cref{S73}.
	To show the reverse inclusion, assume we have $\Phi\in\HomCSL(\mc Cn,\bb R^2)$. 
	The image of $\tau$ contains a nonempty open subset of $\bb R^2$. 
	From what was said about subalgebras in \Cref{S63} we obtain that $\Phi(\mc Cn)$ is bounded. Hence, there exist 
	$\lambda>0$ and $a\in\bb R^2$ such that $(T_a\circ M_\lambda)(\Phi(\mc Cn))\subseteq\tau(\mc C2)$. We have 
	\[
		\Psi\DE\big[\tau^{-1}\circ T_a\circ M_\lambda\big]\circ\Phi\in\HomCSL(\mc Cn,\mc C2),
	\]
	and 
	\[
		{\sf H}(\Psi)=H\big(\big[M_{\frac 1\lambda}\circ T_{-a}\circ\tau\big]\circ\Psi\big)=H(\Phi).
	\]
	Item (iii) follows easily. If $k=2$, \Cref{S2}(i) yields the assertion. If $n=2,k\geq 2$, we use \Cref{S2}(iv) to compute 
	\begin{align*}
		H(2,\bb R^k)= &\, H(2,2)=\mc H(2,2)\subseteq\mc H(2,k)\subseteq H(2,\bb R^k)
		\\
		= &\, \Big\{\textstyle{\Span\{\E[2]\},\big\{\binom\alpha\beta\in\bb R^2\DSb \alpha\leq\beta\big\},
		\big\{\binom\alpha\beta\in\bb R^2\DSb\alpha\geq\beta\big\},\bb R^2}\Big\}
		\\
		= &\, \Big\{\big(\Cone\{y_1\}\cup\Cone\{y_2\}\big)+\Span\{\E[n]\}\DS y_1,y_2\in\bb R^n\Big\}.
	\end{align*}
\end{proof}

\noindent
The decisive quantity required for a description of $\mc H(n,k)$ when $n,k\geq 3$ is the following.

\begin{Definition}
\label{S25}
	Let $n\in\bb N\setminus\{0\}$.
	We define a map $\delta\DF\mc P(\bb R^n)\setminus\{\emptyset\}\to\bb N\cup\{\infty\}$ as follows.
	Let $H$ be a nonempty subset of $\bb R^n$. If there exist finitely many finitely generated cones $C_1,\ldots,C_N$ such
	that 
	\begin{equation}
	\label{S11}
		H=\bigcup_{i=1}^NC_i,
	\end{equation}
	then set
	\begin{equation}
	\label{S41}
		\delta(H)\DE\max\big\{\dim(\Span C_i)\DS i\in\{1,\ldots,N\}\big\}
		.
	\end{equation}
	If $H$ cannot be represented as a union \cref{S11}, set $\delta(H)\DE\infty$. 
\end{Definition}

\noindent
First of all we have to argue that $\delta(H)$ is well-defined, and we right away include more properties of
$\delta$ into the statement.

\begin{lemma}
\label{S42}
	Let $n\in\bb N\setminus\{0\}$.
	\begin{Enumerate}
	\item Assume we have finitely generated cones $C_1,\ldots,C_N$ and $C'_1,\ldots,C'_{N'}$ with 
		\[
			\bigcup_{i=1}^N C_i\subseteq\bigcup_{j=1}^{N'} C'_j.
		\]
		Then 
		\begin{multline*}
			\max\big\{\dim\big(\Span C_i\big)\DS i\in\{1,\ldots,N\}\big\}
			\\
			\leq\max\big\{\dim\big(\Span C'_j\big)\DS j\in\{1,\ldots,N'\}\big\}.
		\end{multline*}
		In particular, the map $\delta$ is well-defined.
	\item For any $H\in\mc P(\bb R^n)\setminus\{\emptyset\}$ we have $\delta(H)\in\{0,\ldots,n\}\cup\{\infty\}$. Moreover, 
		\begin{align*}
			& \delta(H)=0\,\Leftrightarrow\,H=\{0\}
			\\
			& \delta(H)=1\,\Leftrightarrow\,
			\exists m\geq 1,y_1,\ldots,y_m\in\bb R^n\setminus\{0\}\DP H=\bigcup_{j=1}^m\Conv\{y_j\}
		\end{align*}
	\item Assume that $H,H'\in\mc P(\bb R^n)\setminus\{\emptyset\}$ with $\delta(H),\delta(H')<\infty$. Then 
		\[
			\delta(H\cup H')=\max\{\delta(H),\delta(H')\},\quad
			\delta(H+H')\leq\delta(H)+\delta(H').
		\]
		In particular, $\delta$ is monotone in the sense that 
		\[
			H\subseteq H'\wedge\delta(H)<\infty\ \Rightarrow\ \delta(H)\leq\delta(H')
			.
		\]
	\item Let $H\in\mc P(\bb R^n)\setminus\{\emptyset\}$ and let $\kappa\DF\bb R^n\to\bb R^m$ be a linear map. Then 
		$\delta(\kappa(H))\leq\delta(H)$.
	\item If $H\in\mc P(\bb R^n)\setminus\{\emptyset\}$ and $\delta(H)<\infty$, then $H$ is closed.
	\end{Enumerate}
\end{lemma}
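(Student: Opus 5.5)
The plan is to reduce everything to one geometric fact: a finitely generated cone $C$ with $\dim(\Span C)=d$ has nonempty interior relative to $\Span C$. This holds because $C$ contains $d$ linearly independent generators, and the open simplex-cone they span is open in $\Span C$.

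\emph{Item (i).} Let $C_{i_0}$ attain the maximum on the left, with $d\DE\dim\Span C_{i_0}$, and let $V\DE\Span C_{i_0}$. Pick a nonempty set $O\subseteq C_{i_0}$ that is open in $V$. Then
\[
	O=\bigcup_{j=1}^{N'}(O\cap C'_j)\subseteq\bigcup_{j=1}^{N'}\big(V\cap\Span C'_j\big).
\]
So $V$-open $O$ is covered by finitely many linear subspaces $V\cap\Span C'_j$ of $V$. If all of these were proper subspaces of $V$, each would be closed with empty interior in $V$, and the Baire category theorem (or simply a measure/dimension argument) would give a contradiction. Hence some $V\cap\Span C'_j=V$, i.e.\ $V\subseteq\Span C'_j$ and $\dim\Span C'_j\geq d$. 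Well-definedness of $\delta$ follows by applying this to two representations of the same $H$ in both directions.

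\emph{Item (ii).} The value lies in $\{0,\ldots,n\}$ because spans live in $\bb R^n$. If $\delta(H)=0$, every $C_i$ is $\{0\}$, so $H=\{0\}$; the converse is clear. If $\delta(H)=1$, each $C_i$ is $\{0\}$, a ray $\Cone\{y\}$, or a line $\Cone\{y,-y\}$. A line is the union of two rays, and $\{0\}$ is contained in any ray, and at least one ray occurs. This gives $H$ as a finite union of rays $\Cone\{y_j\}$ (I read the statement's $\Conv\{y_j\}$ as $\Cone\{y_j\}$). The converse is immediate from (i).

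\emph{Items (iii)--(v).} For the union, concatenate the two lists of cones; the maximum of dimensions is then $\max\{\delta(H),\delta(H')\}$. For the sum, $H+H'=\bigcup_{i,j}(C_i+C'_j)$, each $C_i+C'_j$ is a finitely generated cone, and $\dim\Span(C_i+C'_j)\leq\dim\Span C_i+\dim\Span C'_j$. Monotonicity follows from (i), provided $\delta(H')<\infty$; if $\delta(H')=\infty$ it is trivial. For (iv), if $\delta(H)=\infty$ there is nothing to show. Otherwise $\kappa(H)=\bigcup\kappa(C_i)$, each $\kappa(C_i)$ is a finitely generated cone, and $\dim\Span\kappa(C_i)\leq\dim\Span C_i$. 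For (v), finitely generated cones are closed; this is the standard fact, e.g.\ via Carath\'eodory for cones, which writes such a cone as a finite union of cones over linearly independent generators, each of which is a closed linear image of the orthant. Finite unions of closed sets are closed.

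The main obstacle is item (i), specifically the step that a relatively open piece of $V$ cannot be covered by finitely many proper subspaces. I will handle it with the elementary fact that a finite union of proper subspaces of $V$ has empty interior in $V$; proper subspaces are nowhere dense closed sets, so Baire applies.
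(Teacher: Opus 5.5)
Your proposal is correct and follows the paper's proof essentially step by step. Item (i) rests on the same observation: a finitely generated cone contains a nonempty subset that is open in its span, and such a set cannot be covered by finitely many proper subspaces (you use $V\cap\Span C'_j$ where the paper uses $\Span(C_i\cap C'_j)$), and items (iii)--(v) are argued the same way. You also supply the $\delta(H)=1$ characterisation, which the paper's proof does not address, and you are right that $\Conv\{y_j\}$ there must be read as $\Cone\{y_j\}$.
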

\begin{proof}
	\phantom{}
	\begin{Elist}
	\item
		Let $i\in\{1,\ldots,N\}$. Choose $x_1,\ldots,x_m\in\bb R^n$ such that
		$C_i=\Cone\{x_1,\ldots,x_m\}$, and choose a maximal linearly independent subset $\{x_{l_1},\ldots,x_{l_d}\}$ of 
		$\{x_1,\ldots,x_m\}$. Then
		\[
			\Span C_i=\Span\{x_{l_1},\ldots,x_{l_d}\},\quad d=\dim\big(\Span C_i\big).
		\]
		Consider the set 
		\[
			U\DE\Big\{\sum_{k=1}^d\beta_kx_{l_k}\DS\forall k\in\{1,\ldots,d\}\DP\beta_k>0\Big\}.
		\]
		Then we have 
		\[
			U\subseteq C_i=C_i\cap\bigcup_{j=1}^{N'} C'_j=\bigcup_{j=1}^{N'}(C_i\cap C'_j)
			\subseteq\bigcup_{j=1}^{N'}\Span(C_i\cap C'_j)\subseteq\Span C_i.
		\]
		Since $U$ is a nonempty open subset of the linear space $\Span C_i$, it follows that 
		\[
			\exists j\in\{1,\ldots,N'\}\DP\Span(C_i\cap C_j')=\Span C_i.
		\]
		Choosing $j$ with this property, we obtain
		\begin{align*}
			\dim\big(\Span C_i\big)= &\, \dim\Span(C_i\cap C_j')\leq\dim\Span C_j'
			\\
			\leq &\, \max\big\{\dim\Span C'_j\DS j\in\{1,\ldots,N'\}\big\}.
		\end{align*}
	\item
		Assume $\delta(H)<\infty$ and choose a decomposition \cref{S11}. Clearly, we have
		$\dim\Span C_i\leq n$. Moreover, if $\delta(H)=0$, then for each $i$ and $C_i$ as in \cref{S11} we have 
		$C_i\subseteq\Span C_i=\{0\}$. Since $H\neq\emptyset$, thus $H=\{0\}$. Conversely, if $H=\{0\}$, we can use 
		$N\DE 1$ and $C_1\DE\{0\}$ in \cref{S11} and \cref{S41}. 
	\item
		Write $H=\bigcup_{i=1}^N C_i$ and $H'=\bigcup_{j=1}^{N'} C'_j$. Then 
		\[
			H\cup H'=\bigcup_{i=1}^N C_i\cup\bigcup_{j=1}^{N'} C'_j,\quad
			H+H'=\bigcup_{i=1}^N\bigcup_{j=1}^{N'}\big(C_i+C'_j\big).
		\]
		We already see that $\delta(H\cup H')=\max\{\delta(H),\delta(H')\}$. Each $C_i+C'_j$ is a finitely generated cone and
		$\dim\Span(C_i+C'_j)\leq\dim\Span C_i+\dim\Span C'_j$. Therefore $\delta(H+H')\leq\delta(H)+\delta(H')$.
	\item
		If $\delta(H)=\infty$, there is nothing to prove. Assume $H=\bigcup_{i=1}^N C_i$ as in \cref{S11}. Then 
		\[
			\kappa(H)=\kappa\Big(\bigcup_{i=1}^N C_i\Big)=\bigcup_{i=1}^N\kappa(C_i).
		\]
		Each $\kappa(C_i)$ is a finitely generated cone and $\Span\kappa(C_i)=\kappa(\Span C_i)$. Thus
		$\dim\Span\kappa(C_i)\leq\dim\Span C_i$ and in turn $\delta(\kappa(H))\leq\delta(H)$.
	\item
		Every finitely generated cone is closed, and hence also every finite union of such cones is. 
	\end{Elist}
\end{proof}

\begin{Example}
\label{S43}
	Let $H$ be a linear subspace of $\bb R^n$. Then 
	\[
		\delta(H)=\dim H.
	\]
	To see this, choose a basis $\{x_1,\ldots,x_d\}$ of $H$, write 
	\[
		H=\bigcup\Big\{\Cone\big\{(-1)^{l_j}x_j\DS j\in\{1,\ldots,d\}\big\}\DSb (l_1,\ldots,l_d)\in\{0,1\}^d\Big\},
	\]
	and note that 
	\[
		\Span\big\{(-1)^{l_j}x_j\DS j\in\{1,\ldots,d\}\big\}=H.
	\]
\end{Example}

\noindent
We can now formulate the main result of this section.

\begin{Theorem}
\label{S24}
	Let $n,k\in\bb N\setminus\{0\}$. If $n\in\{1,2\}$ or $k\neq 2$, then 
	\begin{equation}
	\label{S58}
		\mc H(n,k)=\big\{\tilde H\in\mc P(\bb R^n)\setminus\{\emptyset\}\DSb
		\tilde H=\tilde H+\Span\{\E[n]\}\wedge\delta(\tilde H)\leq k\big\}.
	\end{equation}
	If $n\geq 3$ and $k=2$, then ``\,$\subsetneq$'' instead of ``\,$=$'' holds in \cref{S58}.
\end{Theorem}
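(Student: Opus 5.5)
The proof splits into the two inclusions of \cref{S58}. The cases $n\in\{1,2\}$ or $k\in\{1,2\}$ come from \Cref{S55}. For $n=1$ the right side of \cref{S58} is $\{\bb R\}=\{\Span\{\E[1]\}\}$. For $k=1$ a set $\tilde H\supseteq\Span\{\E[n]\}$ with $\delta(\tilde H)\le 1$ must equal $\Span\{\E[n]\}$, since it contains a line. For $n=2$ one checks directly that the four sets of \Cref{S2}(iv) are exactly the sets invariant under $\Span\{\E[2]\}$ with $\delta\le 2$. For $n\ge3$, $k=2$ we have $\mc H(n,2)=H(n,2)$, and the set $\Span\{\E[n]\}+\Span\{e_1,e_2\}$ has $\delta=3>2$ and lies outside it. Conversely, $\Cone\{y_1\}\cup\Cone\{y_2\}+\Span\{\E[n]\}$ is a union of two cones of dimension at most $2$, which gives ``$\subseteq$'' there. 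I also need strictness: a set such as $\Cone\{y_1\}\cup\Cone\{y_2\}\cup\Cone\{y_3\}+\Span\{\E[n]\}$ with generic $y_i$ has $\delta=2$, but it is not a union of two half-planes through $\Span\{\E[n]\}$. So the remaining work is $n,k\ge3$.

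\emph{Inclusion ``$\subseteq$''.} Let $\Phi\in\HomCSL(\mc Cn,\mc Ck)$ and compose with $\tau\DE(\Dummy|_{S^{k-1}})\circ\rho_k$, so that $\omega$ ranges over $S^{k-1}$. Write $\Phi(\{e_j\})=\Conv M_j$ with $M_j\subseteq\mc Dk$ finite. Then $h(\omega)=(\max_{y\in M_j}(y,\omega))_{j=1}^n$. This map $h\DF\bb R^k\to\bb R^n$ is piecewise linear in the sense of \Cref{S79}\Elistref{S59}: on each cone of the common refinement of the cones $C_y$ from \Cref{S74}, all $n$ maxima are attained by fixed vectors $y_j$. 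On such a cone $D$, $h$ is a linear map $\xi_D$, so $h(D)=\xi_D(D)$ is a finitely generated cone of dimension at most $\dim\Span D\le k$. Since $h$ is positively homogeneous, $\bigcup_\omega\Cone\{h(\omega)\}=h(\bb R^k)$ is a finite union of such cones. Adding $\Span\{\E[n]\}$ could raise the dimension to $k+1$. To avoid this, note $\E[k]$: each $y\in\mc Dk$ has $(y,\E[k])=1$, so $h(\omega+t\E[k])=h(\omega)+t\E[n]$. Hence $h(\bb R^k)$ already contains $\Span\{\E[n]\}$ and is invariant under it. The result is closed by \Cref{S42}(v), so no closure is needed, and \Cref{S42} gives $\delta\le k$.

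\emph{Inclusion ``$\supseteq$''.} This is the main obstacle. Let $\tilde H=\bigcup_i C_i$ with $\tilde H+\Span\{\E[n]\}=\tilde H$ and $\dim\Span C_i\le k$. I must build $\Phi\DF\mc Cn\to\mc Ck$, that is, finite sets $M_1,\dots,M_n\subseteq\mc Dk$, such that the image of the resulting piecewise linear map $h$ is exactly $\tilde H$. Equivalently, by \Cref{S3}, up to translations and rescalings (\Cref{S63}), I need $n$ convex piecewise linear functions $g_j=\max_{y\in M_j}(y,\cdot)$ on $\bb R^k$ with $g_j(\omega+t\E[k])=g_j(\omega)+t$ and $(g_j)_j(\bb R^k)=\tilde H$. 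First reduce to cones $C_i$ containing $\E[n]$ in their span, replacing $C_i$ by $C_i+\Span\{\E[n]\}$. This keeps $\dim\le k$ provided each maximal-dimensional piece already contains $\E[n]$; invariance of $\tilde H$ and a dimension argument as in \Cref{S42}(i) allow a re-decomposition into such pieces. Quotienting by $\E$, I need a map from $\bb R^{k-1}$, realised as $\{\E[k]\}^\perp$, into $\bb R^n/\Span\{\E[n]\}$ that covers a finite union of cones of dimension at most $k-1$.

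The key geometric step, which I expect to be hardest, is to cut $S^{k-2}$ into finitely many polyhedral pieces. One piece is mapped linearly onto each cone $C_i$, or onto a cone simplicially subdividing it, while the remaining pieces are collapsed onto lower-dimensional faces. The assembled map must be continuous, and each coordinate must be a maximum of finitely many linear functions, so that convexity holds. Convexity is the real constraint, because the coordinates must be maxima. I would handle this with the $\max$-trick: any continuous piecewise linear function can be written as $g-g'$ with $g,g'$ convex. Adding to all coordinates a common large convex function $c\cdot\|\cdot\|_1$-type term $f$ changes $h$ only by a multiple of $\E[n]$, since $f\,\E[n]$ is killed by the quotient. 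So any continuous piecewise linear $h$ with the right image can be made coordinatewise convex. Since $k-1\ge2$, the sphere $S^{k-2}$ is connected, which allows the images of consecutive pieces to be joined through $\Span\{\E[n]\}$ or through $0$. This is exactly where $k=2$ fails, because $S^0$ consists of two points. Finally, translate and rescale so that all vectors $y\in M_j$ lie in $\mc Dk$, and apply \Cref{S31}.
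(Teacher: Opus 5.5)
Your proposal is correct in outline. It agrees with the paper on the boundary cases and on ``$\subseteq$''. Your variant of ``$\subseteq$'', which uses $\rho_k$ on all of $\bb R^k$ together with $h(\omega+t\E[k])=h(\omega)+t\E[n]$, even avoids \Cref{S39}; the paper needs that lemma because it first passes to $\bb R^{k-1}$ via $\kappa_k$.

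The genuine difference is in the hard inclusion ``$\supseteq$''.
\begin{itemize}
\item The paper never convexifies after the fact. By repeatedly truncating vertices of a simplex, \Cref{S19} builds a polytope $K=\{x\in\bb R^d\mid (x,v_l)\le\gamma_l\ \text{for all } l\}$ with simplicial normal fan whose graph $\mc G$ has large diameter. This room is used to choose $T+1$ cones $J_t$ such that no cone of the fan meets two of them. The generators of $M_t$ enter only as small perturbations of the offsets $\gamma_l$ at the vertices of $J_t$. Openness of (A2) freezes the normal fan, so convexity is automatic and the image of $h$ is computed cone by cone from \cref{S34}.
\item You instead take an arbitrary continuous piecewise linear map with the prescribed image: a bump onto each cone, with a collar tapering to $0$. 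You then restore convexity by adding a common convex function times $\E[n]$, which disappears modulo $\Span\{\E[n]\}$.
\end{itemize}
The resulting maps have the same shape. In the paper, $J_t$ is the bump, the cones meeting $J_t$ form the collar, and $\sigma(K,\cdot)\,\E[n]$ is the convex correction. The paper has to manufacture a fan that is both polytopal and roomy, which is exactly what \Cref{S19} does. You trade that lemma for the theorem that continuous piecewise linear functions are differences of convex ones. Your route is more conceptual and makes visible that only $\dim S^{k-2}\ge 1$ matters. The paper's route is explicit and self-contained.

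Two points must be made precise for your version to be a proof.
\begin{itemize}
\item Write the collar down explicitly. For example, take simplicial cones $D_t'\subseteq\{\E[k]\}^\perp$ meeting pairwise only in $0$, each containing a smaller simplicial cone $D_t$ in its interior. Map $D_t$ linearly onto $\Cone M_t$, sending surplus generators to $0$. Triangulate the region between $\partial D_t$ and $\partial D_t'$ using vertices on $\partial D_t\cup\partial D_t'$, send the outer vertices to $0$, and set the map to $0$ outside $\bigcup_t D_t'$.
\item The convex correction must be positively homogeneous and must bend along every wall of this fan, for instance $c\sum_W|\ell_W|$ over the wall hyperplanes. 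A multiple of $\|\cdot\|_1$ bends only along coordinate hyperplanes and does not suffice in general. Homogeneity is needed so that the convexified coordinates are maxima of finitely many linear forms, i.e.\ support functions of polytopes.
\end{itemize}
Finally, your remark about $\Span\{\E[n]\}+\Span\{e_1,e_2\}$ for $k=2$ is beside the point, since that set lies outside both sides. Strictness comes from your three-half-plane example.
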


\noindent
Note that the set on the right side of \cref{S58} is always nonempty since it contains $\Span\{\E[n]\}$.

\subsection*{Proof of \Cref{S24}}

We use Caratheodory's theorem in the following form.

\begin{lemma}
\label{S18}
	Let $H\subseteq\bb R^n$ with $\delta(H)<\infty$. Then there exist $T\in\bb N$ and linearly independent sets 
	$M_0,\ldots,M_T\subseteq\bb R^n$, such that 
	\[
		H=\bigcup_{t=0}^T\Cone M_t.
	\]
\end{lemma}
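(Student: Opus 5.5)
Since $\delta(H)<\infty$, \Cref{S25} gives finitely generated cones $C_1,\ldots,C_N$ with $H=\bigcup_{i=1}^N C_i$. The plan is to refine each $C_i$ separately into finitely many cones generated by linearly independent sets; collecting all of them then gives the claimed representation with some finite $T$.

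So fix one finitely generated cone $C=\Cone\{x_1,\ldots,x_m\}$. The key step is the conic version of Caratheodory's theorem:
\[
	C=\bigcup\big\{\Cone M\DS M\subseteq\{x_1,\ldots,x_m\},\ M\text{ linearly independent}\big\}.
\]
The inclusion ``$\supseteq$'' is clear. For ``$\subseteq$'', take $x=\sum_{j\in J}\lambda_jx_j$ with all $\lambda_j>0$ and $J$ of minimal cardinality. Suppose $\{x_j\DS j\in J\}$ is linearly dependent, say $\sum_{j\in J}\mu_jx_j=0$ with some $\mu_j\neq 0$. Replacing $\mu$ by $-\mu$ if necessary, we may assume some $\mu_j>0$. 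Set $t\DE\min\{\lambda_j/\mu_j\DS \mu_j>0\}$. Then $x=\sum_{j\in J}(\lambda_j-t\mu_j)x_j$ has all coefficients nonnegative and at least one coefficient zero. This contradicts the minimality of $J$, so $\{x_j\DS j\in J\}$ is linearly independent.

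The case $x=0$ is covered by $M=\emptyset$, with $\Cone\emptyset=\{0\}$. Since $\{x_1,\ldots,x_m\}$ has only finitely many subsets, each $C_i$ is a finite union of cones over linearly independent sets. Enumerating all of these, over all $i$, as $M_0,\ldots,M_T$ finishes the proof.

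There is no genuine obstacle here. The only points to check are the convention that $\Cone\emptyset=\{0\}$, so that the empty set counts as linearly independent, and that $H\neq\emptyset$, which guarantees $T\geq 0$.
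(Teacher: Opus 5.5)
Your proposal is correct and follows the paper's proof: write $H$ as a finite union of finitely generated cones and apply the conic version of Carath\'eodory's theorem to each one. The only difference is that you prove the conic Carath\'eodory theorem yourself with the standard minimal-support argument, whereas the paper cites it from Schrijver (Corollary~7.1i).
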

\begin{proof}
	Since $\delta(H)<\infty$, the set $H$ can be represented as a finite union of finitely generated cones 
	$C_1,\ldots,C_s$. Write $C_i=\Cone N_i$ with some finite sets $N_i$. By the variant of Caratheodory's theorem given in 
	\cite[Corollary~7.1i]{schrijver:1998} we have 
	\[
		\Cone N_i=\bigcup\big\{\Cone M\DS M\subseteq N_i, M\text{ linearly independent}\big\}.
	\]
	Since $N_i$ is finite, this is a finite union, and we obtain 
	\[
		H=\bigcup_{i=1}^s\bigcup_{\substack{M\subseteq N_i\\ M\text{ l.i.}}}\Cone M.
	\]
\end{proof}

\noindent
The next lemma gives a practical reformulation of the set on the right side of \cref{S58}. 

\begin{lemma}
\label{S39}
	Let $n\in\bb N\setminus\{0\}$ and $\tilde H\in\mc P(\bb R^n)\setminus\{\emptyset\}$. Assume that
	\begin{equation}
	\label{S36}
		\tilde H=\tilde H+\Span\{\E[n]\}.
	\end{equation}
	Then
	\[
		\delta(\tilde H)\!=\!\inf\big\{k\!\in\!\bb N\!\setminus\!\{0\}\DSb
		\exists H\!\in\!\mc P(\bb R^n)\!\setminus\!\{\emptyset\}\DP
		\delta(H)\!\leq k\!-\!1\wedge\tilde H\!=\!H\!+\!\Span\{\E[n]\}\big\}.
	\]
	In particular, we have 
	\begin{equation}
	\label{S56}
		\delta(\tilde H)=1\ \Leftrightarrow\ \tilde H=\Span\{\E[n]\}
	\end{equation}
\end{lemma}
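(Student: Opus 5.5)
Write $\ell\DE\Span\{\E[n]\}$ and let $m$ be the infimum on the right side. The plan is to prove $\delta(\tilde H)\leq m$ and $m\leq\delta(\tilde H)$, including the cases where one side is $\infty$.

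\textbf{The inequality $\delta(\tilde H)\leq m$.} If $m=\infty$ there is nothing to prove. Otherwise pick $H$ with $\delta(H)\leq m-1$ and $\tilde H=H+\ell$. By \Cref{S43} we have $\delta(\ell)=1$. Then \Cref{S42}(iii) gives
\[
	\delta(\tilde H)=\delta(H+\ell)\leq\delta(H)+1\leq m.
\]

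\textbf{The inequality $m\leq\delta(\tilde H)$.} If $\delta(\tilde H)=\infty$ this is trivial, so assume $\delta(\tilde H)=d<\infty$. By \Cref{S18}, write $\tilde H=\bigcup_{t=0}^T\Cone M_t$ with each $M_t$ linearly independent, so $|M_t|\leq d$. Let $P\DF\bb R^n\to\{\E[n]\}^\perp$ be the orthogonal projection along $\ell$, and set $H\DE P(\tilde H)$. Since $x-Px\in\ell$ and $\tilde H=\tilde H+\ell$ by \cref{S36}, we get $H\subseteq\tilde H$ and $\tilde H\subseteq H+\ell$; hence $\tilde H=H+\ell$.

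It remains to show $\delta(H)\leq d-1$; the case $d=0$ cannot occur because $\tilde H\supseteq\ell$. We have $H=\bigcup_t\Cone P(M_t)$. Fix $t$ and distinguish two cases.
\begin{Ilist}
\item If $\Span M_t\cap\ell\neq\{0\}$, then $\ell\subseteq\Span M_t$, so $\dim\Span P(M_t)=\dim P(\Span M_t)=|M_t|-1\leq d-1$.
\item If $\Span M_t\cap\ell=\{0\}$, then $|M_t|$ may equal $d$, and this is the main obstacle. Here we use the $\ell$-invariance: the set $\Cone M_t+\ell$ is contained in $\tilde H$ and is a finitely generated cone, so its span has dimension $|M_t|+1$. \Cref{S42}(i) then forces $|M_t|+1\leq d$, so $\dim\Span P(M_t)\leq|M_t|\leq d-1$.
\end{Ilist}
To make the second case rigorous, note that $\Cone M_t+\ell=\Cone(M_t\cup\{\E[n],-\E[n]\})$. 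Apply \Cref{S42}(i) to the inclusion of this single cone into $\bigcup_t\Cone M_t$; this shows that the maximal dimension $d$ is at least $|M_t|+1$. Combining both cases gives $\delta(H)\leq d-1$, and taking $k=d$ in the infimum yields $m\leq d$.

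\textbf{Consequence \cref{S56}.} If $\tilde H=\ell$, then $\delta(\tilde H)=1$ by \Cref{S43}. Conversely, if $\delta(\tilde H)=1$, the equality just proved gives $H$ with $\delta(H)\leq 0$ and $\tilde H=H+\ell$. The set $H$ is nonempty and $\delta(H)\neq\infty$, so \Cref{S42}(ii) gives $H=\{0\}$, and therefore $\tilde H=\ell$.
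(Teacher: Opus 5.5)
Your proof is correct and follows essentially the same route as the paper. Both project along $\Span\{\E[n]\}$, lift each cone by $\pm\E[n]$ inside $\tilde H$, and bound dimensions by the argument of \Cref{S42}(i). The only difference is that you apply \Cref{S18} to $\tilde H$ before projecting, which forces your case split on whether $\E[n]\in\Span M_t$. The paper instead applies \Cref{S18} to $P(\tilde H)$, so that $M_t\subseteq\{\E[n]\}^\perp$ and only your second case ever arises.
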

\begin{proof}
	To prove ``$\leq$'' assume we have $k\in\bb N\setminus\{0\}$ and $H\subseteq\bb R^n$ with $\delta(H)\leq k-1$ and 
	$\tilde H=H+\Span\{\E[n]\}$. Choose finitely generated cones $C_1,\ldots,C_n$ such that $H=\bigcup_{i=1}^N C_i$. 
	Then we can write 
	\[
		\tilde H=\bigcup_{i=1}^N\Cone\big(C_i\cup\{\E[n]\}\cup\{-\E[n]\}\big).
	\]
	We have
	\[
		\dim\Big(\Span\big(C_i\cup\{\E[n]\}\cup\{-\E[n]\}\big)\Big)\leq\dim\big(\Span C_i\big)+1\leq\delta(H)+1\leq k,
	\]
	and hence $\delta(\tilde H)\leq k$. 

	We come to the reverse inequality. If $\delta(\tilde H)=\infty$ there is nothing to prove, hence assume that 
	$\delta(\tilde H)<\infty$. Let $P\DF\bb R^n\to\bb R^n$ the projection with $\ker P=\Span\{\E[n]\}$ and $\ran P=\{\E[n]\}^\perp$. 
	Due to \cref{S36} we have 
	\[
		\tilde H=\tilde H+\Span\{\E[n]\}=P(\tilde H)+\Span\{\E[n]\}.
	\]
	Choose finitely generated cones $\tilde C_1,\ldots,\tilde C_n$ such that $\tilde H=\bigcup_{i=1}^N \tilde C_i$. Then
	$P(\tilde H)=\bigcup_{i=1}^N P(\tilde C_i)$ and each $P(\tilde C_i)$ is a finitely generated cone. Hence, 
	$\delta(P(\tilde H))<\infty$. By \Cref{S18} we can write $P(\tilde H)=\bigcup_{t=0}^T\Cone M_t$ with linearly independent sets 
	$M_t$. Set $L_t\DE\Span(M_t\cup\{\E[n]\})$, then 
	\[
		\Cone\big(M_t\cup\{\E[n]\}\big)\subseteq L_t\cap\tilde H\subseteq\bigcup_{i=1}^N\big(L_t\cap\Span\tilde C_i\big)
		\subseteq L_t.
	\]
	Since $\Cone(M_t\cup\{\E[n]\})$ contains a nonempty open subset of $L_t$, there must exist $i\in\{1,\ldots,N\}$ with 
	$L_t\cap\Span\tilde C_i=L_t$. It follows that 
	\[
		\dim L_t\leq\dim\Span\tilde C_i\leq\delta(\tilde H).
	\]
	However, since $M_t\subseteq\ran P$, the set $M_t\cup\{\E[n]\}$ is linearly independent and thus $\dim L_t=|M_t|+1$. 
	We see that $\dim\Span M_t\leq\delta(\tilde H)-1$, and conclude that 
	$\delta(P(\tilde H))\leq\delta(\tilde H)-1$. 

	The equivalence \cref{S56} follows remembering \Cref{S42}(ii).
\end{proof}

\noindent
As a consequence of this lemma we have 
\begin{multline}
\label{S71}
	\forall n,k\in\bb N\setminus\{0\}\DP
	\big\{\tilde H\in\mc P(\bb R^n)\setminus\{\emptyset\}\DSb
	\tilde H=\tilde H+\Span\{\E[n]\}\wedge\delta(\tilde H)\leq k\big\}
	\\
	=\big\{H+\Span\{\E[n]\}\DSb H\in\mc P(\bb R^n)\setminus\{\emptyset\}\wedge\delta(H)\leq k-1\big\}.
\end{multline}
Further on in the proof of \Cref{S24} we work with the set \cref{S71} in place of the set on the right side of \cref{S58}.

Due to \Cref{S55} the ``boundary cases'' in \Cref{S24} that $n\in\{1,2\}$ or $k\in\{1,2\}$ are easy to see.

\begin{proof}[Proof of \Cref{S24}; $n\in\{1,2\}\vee k\in\{1,2\}$]
	\phantom{}
	\begin{Ilist}
	\item Case $k=1$: The only possible choice for $H$ in \cref{S71} is $\{0\}$, and hence 
		\[
			\text{\cref{S71}}=\{\Span\{\E[n]\}\}=\mc H(n,1).
		\]
	\item Case $n=1$: We have $\Span\{\E[n]\}=\bb R$, and hence 
		\[
			\text{\cref{S71}}=\{\bb R\}=\mc H(1,k).
		\]
	\item Case $n=2$: Using \Cref{S42}(ii) and \Cref{S55}(iii) we compute 
		\begin{align*}
			\text{\cref{S71}}= &\,
			\Big\{\bigcup_{i=1}^m\Cone\{y_i\}+\Span\{\E[A]\}\DSB y_1,\ldots,y_m\in\bb R^A\Big\}
			\\
			= &\, \Big\{\textstyle{\Span\{\E[2]\},\big\{\binom\alpha\beta\in\bb R^2\DSb \alpha\leq\beta\big\},
			\big\{\binom\alpha\beta\in\bb R^2\DSb\alpha\geq\beta\big\},\bb R^2}\Big\}
			\\
			= &\, \Big\{\big(\Cone\{y_1\}\cup\Cone\{y_2\}\big)+\Span\{\E[n]\}\DS y_1,y_2\in\bb R^n\Big\}.
		\end{align*}
	\item Case $n\geq 3\wedge k=2$: In this case we have 
		\begin{align*}
			\text{\cref{S71}}= &\,
			\Big\{\bigcup_{i=1}^m\Cone\{y_i\}+\Span\{\E[A]\}\DSB y_1,\ldots,y_m\in\bb R^A\Big\}
			\\
			\supsetneq &\, \Big\{\big(\Cone\{y_1\}\cup\Cone\{y_2\}\big)+\Span\{\E[n]\}\DS y_1,y_2\in\bb R^n\Big\}
			=\mc H(n,2).
		\end{align*}
	\end{Ilist}
\end{proof}

\noindent
Next we prove the inclusion ``$\subseteq$'' in \cref{S58}.

\begin{proof}[Proof of ``\,$\subseteq$'' in \cref{S58}]
	Let $\Phi\in\HomCSL(\mc Cn,\mc Ck)$ be given. In order to compute ${\sf H}(\Phi)$ 
	we use the following variant of the support function representation:
	\[
		\tau\DE\rho_{k-1}\circ\Moewe[18][2][6.25]{\kappa_k}\DF\mc Ck\to\bb R^{(\bb R^{k-1})}
	\]
	cf.\ \Cref{S74}\Elistref{S89}. The function $h_{\tau\circ\Phi}$ computes as 
	\[
		h_{\tau\circ\Phi}(\omega)=
		\Big(\sigma\big((\Moewe[18][2][6.25]{\kappa_k}\circ\Phi)(\{e_j\}),\omega\big)\Big)_{j=1}^n
		\qquad\text{for }\omega\in\bb R^{k-1}
	\]
	Each function $\sigma((\Moewe[18][2][6.25]{\kappa_k}\circ\Phi)(\{e_j\}),\Dummy)$ is piecewise linear, 
	and hence also $h_{\tau\circ\Phi}$ has this property. Choose finitely many finitely generated cones 
	$E_1,\ldots,E_N$ with $\bigcup_{i=1}^NE_i=\bb R^{k-1}$, such that for each $i\in\{1,\ldots,N\}$ the restriction 
	$h_{\tau\circ\Phi}|_{E_i}$ coincides with the restriction to $E_i$ of some linear function 
	$\xi_i\DF\bb R^{k-1}\to\bb R^n$. 

	Set $C_i\DE\xi_i(E_i)$, then $C_i$ is a finitely generated cone. Since $\Span C_i\subseteq\xi_i(\bb R^{k-1})$, we have 
	$\dim(\Span C_i)\leq k-1$. Using that 
	\[
		h_{\tau\circ\Phi}(\bb R^{k-1})=\bigcup_{i=1}^N h_{\tau\circ\Phi}(E_i)=\bigcup_{i=1}^NC_i,
	\]
	we obtain $\delta(h_{\tau\circ\Phi}(\bb R^{k-1}))\leq k-1$. 

	Since $h_{\tau\circ\Phi}$ is positively homogeneous, we can write 
	\begin{align*}
		\big\{\lambda h_{\tau\circ\Phi}(\omega)+ &\, \alpha\E[n]\DSb 
		\lambda\geq 0,\omega\in\bb R^{k-1},\alpha\in\bb R\big\}
		=h_{\tau\circ\Omega}(\bb R^{k-1})+\Span\{\E[n]\}
		\\
		= &\, \bigcup_{i=1}^NC_i+\Span\{\E[n]\}
		=\bigcup_{i=1}^N\Cone\big(C_i\cup\{\E[n]\}\big)\cup\bigcup_{i=1}^N\Cone\big(C_i\cup\{-\E[n]\}\big).
	\end{align*}
	Each of the cones in the last union is finitely generated and hence closed. 
	Thus also their union is closed, and we conclude that 
	\begin{equation}
	\label{S93}
		{\sf H}(\Phi)=h_{\tau\circ\Phi}(\bb R^{k-1})+\Span\{\E[n]\}.
	\end{equation}
\end{proof}

\noindent
The proof of ``$\supseteq$'' in \Cref{S24} is the involved part; 
we have to construct homomorphisms $\Phi$ whose set ${\sf H}(\Phi)$ is of a prescribed form. 

We start with an observation that points the direction we are heading to.

\begin{lemma}
\label{S16}
	Let $d,N\in\bb N$ with $1\leq d\leq N$. 
	Let $v_0,\ldots,v_N\in\bb R^d$, let $\gamma_0,\ldots,\gamma_N>0$, and let 
	$\mc J\subseteq\mc P(\{0,\ldots,N\})\setminus\{\emptyset\}$. 
	Assume that 
	\begin{itemize}
	\item[{\rm(A1)}] \Dis{\forall J\in\mc J\DP |J|=d\wedge\Span\{v_j\DS j\in J\}=\bb R^d}.
	\end{itemize}
	Note that {\rm(A1)} implies that the elements $v_j$, $j\in J$, are pairwise different and 
	$\{v_j\DS j\in J\}$ is linearly independent.
	For $J\in\mc J$ denote by $x_J\in\bb R^d$ the unique solution of the linear system
	\begin{equation}
	\label{S52}
		\forall j\in J\DP (x,v_j)=\gamma_j.
	\end{equation}
	Now assume furthermore that our data satisfies
	\begin{itemize}
	\item[{\rm(A2)}] \Dis{\forall J\in\mc J,k\in\{0,\ldots,N\}\setminus J\DP (x_J,v_k)<\gamma_k}.
	\end{itemize}
	Then the set 
	\begin{equation}
	\label{S54}
		K\DE\big\{x\in\bb R^d\DS\forall j\in\{0,\ldots,N\}\DP(x,v_j)\leq\gamma_j\big\}
	\end{equation}
	satisfies
	\begin{equation}
	\label{S34}
		\forall J\in\mc J\DQ\forall(\beta_j)_{j\in J}\in[0,\infty)^J\DP 
		\max\Big\{\Big(x,\sum_{j\in J}\beta_jv_j\Big)\DSB x\in K\Big\}=\sum_{j\in J}\beta_j\gamma_j.
	\end{equation}
\end{lemma}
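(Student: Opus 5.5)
Fix $J\in\mc J$ and $(\beta_j)_{j\in J}\in[0,\infty)^J$, and write $w\DE\sum_{j\in J}\beta_jv_j$. The plan is to show two things: the value $\sum_{j\in J}\beta_j\gamma_j$ is attained at the point $x_J$, and it is an upper bound for $(x,w)$ on $K$. Before that I would check that the maximum in \cref{S34} exists. The bound below shows that $(\Dummy,w)$ is bounded above on $K$. The set $K$ is closed, and once $x_J\in K$ is known it is also nonempty. Since $K$ is a polyhedron, a linear function that is bounded above on it attains its supremum. In fact this is not even needed here, because the supremum will turn out to be attained at $x_J$ directly.

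\emph{Step 1: $x_J\in K$.} By (A1) the vectors $v_j$, $j\in J$, form a basis of $\bb R^d$, so the system \cref{S52} has the unique solution $x_J$. For $j\in J$ we have $(x_J,v_j)=\gamma_j$ by \cref{S52}. For $k\in\{0,\ldots,N\}\setminus J$ we have $(x_J,v_k)<\gamma_k$ by (A2). Together these give $x_J\in K$.

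\emph{Step 2: upper bound.} Let $x\in K$. Since each $\beta_j\geq 0$ and $(x,v_j)\leq\gamma_j$ for every $j\in J$, linearity gives
\[
	(x,w)=\sum_{j\in J}\beta_j(x,v_j)\leq\sum_{j\in J}\beta_j\gamma_j.
\]

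\emph{Step 3: attainment.} At $x=x_J$ every inequality $(x_J,v_j)\leq\gamma_j$ with $j\in J$ holds with equality, so
\[
	(x_J,w)=\sum_{j\in J}\beta_j\gamma_j.
\]
Combining this with Step 2 gives \cref{S34}.

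There is no real obstacle in this argument. The only point needing care is that $K\neq\emptyset$, which Step 1 provides, so that the maximum is over a nonempty set. Note also that only the non-strict part of (A2) is used here; the strict inequality in (A2) is presumably needed later in the paper, for uniqueness of maximisers or for stability under perturbation.
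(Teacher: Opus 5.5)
Your proof is correct and is essentially the paper's argument. The paper bounds $(x,\sum_{j\in J}\beta_jv_j)\leq\sum_{j\in J}\beta_j\gamma_j$ for $x\in K$ using $\beta_j\geq 0$, and then notes that $x_J\in K$ and that equality holds at $x_J$. You are also right that only the non-strict form of (A2) is needed here.
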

\begin{proof}
	For each $x\in K$ and $\beta_j\geq 0$ we can estimate
	\begin{equation}
	\label{S33}
		\Big(x,\sum_{j\in J}\beta_jv_j\Big)=\sum_{j\in J}\beta_j(x,v_j)\leq\sum_{j\in J}\beta_j\gamma_j.
	\end{equation}
	Now note that $x_J\in K$ and that 
	for $x_J$ equality holds in \cref{S33}.
\end{proof}

\noindent
An important technical role is played by the following object.

\begin{Definition}
\label{S53}
	Given $\mc J\subseteq\mc P(\{0,\ldots,N\})\setminus\{\emptyset\}$, we define an undirected graph $\mc G$ as follows.
	The set of vertices of $\mc G$ is $\{0,\ldots,N\}$ and the set of edges of $\mc G$ is $\bigcup_{J\in\mc J}(J\times J)$. 
	If $\mc G$ is connected, we denote by 
	\[
		d_{\mc G}\DF\{0,\ldots,N\}\times\{0,\ldots,N\}\to\bb N
	\]
	the shortest-path distance of this graph.
\end{Definition}

\noindent
The next lemma contains the crucial construction; we build a skeleton from which in the proof of \Cref{S24} 
candidates for $\Phi(\{e_j\})$ will be derived.

\begin{lemma}
\label{S19}
	Let $d,N\in\bb N$ with $2\leq d\leq N$. 
	Then there exist $v_0,\ldots,v_N\in\bb R^d$, $\gamma_0,\ldots,\gamma_N>0$, and 
	$\mc J\subseteq\mc P(\{0,\ldots,N\})\setminus\{\emptyset\}$, that satisfy {\rm(A1)}, {\rm(A2)}, and 
	\begin{itemize}
	\item[{\rm(A3)}] \Dis{\bigcup_{J\in\mc J}\Cone\{v_j\DS j\in J\}=\bb R^d},
	\item[{\rm(A4)}] the graph $\mc G$ is connected and 
		\[
			\max\Big\{\sum_{j\in J}d_{\mc G}(j,k)\DSB J\in\mc J,k\in \{0,\ldots,N\}\Big\}\geq N.
		\]
	\end{itemize}
	Note that $\mc G$ being connected implies that $\bigcup_{J\in\mc J}J=\{0,\ldots,N\}$.
\end{lemma}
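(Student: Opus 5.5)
The plan is to realise the data $(v_j,\gamma_j,\mc J)$ by polarity from a suitably chosen \emph{simplicial} polytope, namely a \emph{stacked} polytope built along a path. Once the polytope is chosen, {\rm(A1)}--{\rm(A3)} follow from general facts, and {\rm(A4)} comes from an explicit distance count.

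\emph{Step 1: polarity gives (A1)--(A3).} Let $P=\Conv\{v_0,\ldots,v_N\}\subseteq\bb R^d$ be a simplicial $d$-polytope with the following properties: all $v_j$ are vertices, $0$ lies in the interior of $P$, and no $v_k$ lies on the affine hull of a facet that does not contain it. Put $\gamma_j\DE 1$ and let $\mc J$ be the set of index sets $J$ with $\Conv\{v_j\DS j\in J\}$ a facet of $P$.
\begin{itemize}
\item[(A1)] Each such $J$ has $|J|=d$ because $P$ is simplicial. The vectors $v_j$, $j\in J$, are linearly independent because their affine hull is a hyperplane that avoids $0$.
\item[(A2)] The solution $x_J$ of $(x,v_j)=1$, $j\in J$, is the outer normal of that facet scaled so that $P\subseteq\{(x_J,\Dummy)\leq 1\}$. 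Since the other vertices lie strictly inside this halfspace, $(x_J,v_k)<1$ for $k\notin J$.
\item[(A3)] The cones over the facets of a polytope with $0$ in its interior cover $\bb R^d$.
\end{itemize}
The graph $\mc G$ then joins $i$ and $j$ exactly when $v_i,v_j$ lie in a common facet. For a simplicial polytope this is precisely the edge graph of $P$.

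\emph{Step 2: choice of $P$.} Start with a $d$-simplex $\Conv\{v_0,\ldots,v_d\}$ containing $0$ in its interior. For $i=d+1,\ldots,N$, place $v_i$ slightly beyond the facet $F_i\DE\Conv\{v_{i-d},\ldots,v_{i-1}\}$ and beneath all other facets; for instance, take a point close to the barycenter of $F_i$, pushed a little outward. Standard beneath--beyond reasoning shows three things:
\begin{itemize}
\item the new polytope is simplicial and has $v_0,\ldots,v_i$ as vertices;
\item its new facets are $F_i$ with one old vertex replaced by $v_i$; in particular $\{i-d+1,\ldots,i\}$ is again a facet, which justifies the next stacking step;
\item the only new edges are $\{v_i,v_{i-t}\}$ for $t=1,\ldots,d$.
\end{itemize}
A generic small perturbation ensures the non-degeneracy needed in Step 1. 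Hence $\mc G$ is the graph on $\{0,\ldots,N\}$ in which $i\sim j$ iff $1\leq|i-j|\leq d$; note that the initial simplex is consistent with this. This graph is connected.

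\emph{Step 3: the count (A4), which I expect to be the only real point of care.} First I will verify $d_{\mc G}(0,m)=\lceil m/d\rceil$ for $m\geq 1$. The upper bound comes from steps of length $d$. The lower bound holds because each edge changes the index by at most $d$. Now take $k=0$ and $J=\{N-d+1,\ldots,N\}$, which is a facet by the construction. Then
\[
\sum_{j\in J}d_{\mc G}(j,0)=\sum_{r=0}^{d-1}\Big\lceil\frac{N-r}{d}\Big\rceil=\sum_{r=0}^{d-1}\Big\lfloor\frac{N-r+d-1}{d}\Big\rfloor=\sum_{s=0}^{d-1}\Big\lfloor\frac{N+s}{d}\Big\rfloor=N,
\]
where the third expression substitutes $s=d-1-r$ and the last equality is Hermite's identity. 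This uses $N-d+1\geq 1$, so all $j\in J$ are nonzero. For $N=d$ the construction is just the simplex and the sum equals $d\cdot 1=N$. This gives {\rm(A4)}.

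The main obstacle is to make the beneath--beyond step rigorous. One has to check that stacking creates no extra edges and keeps $0$ interior and every $v_j$ a vertex. Both follow because $v_i$ is chosen close enough to $F_i$ that it is beyond only $F_i$.
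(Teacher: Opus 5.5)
\textbf{Verdict: valid for $d\ge3$, but Step~2 is wrong for $d=2$; this is a real error in a case the statement covers, though it is easy to repair.}

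\textbf{Comparison with the paper.} Your construction is the paper's construction seen through polarity. The paper keeps the simple polytope $K=\{x\in\bb R^d : (x,v_j)\le\gamma_j \text{ for all } j\}$. In its induction step it adds $v_{N+1}=\sum_{j\in\mr J}v_j$ with $\gamma_{N+1}=\sum_{j\in\mr J}\gamma_j-\varepsilon$, which truncates the vertex $x_{\mr J}$ of $K$. Polarly, this places $v_{N+1}/\gamma_{N+1}$ just beyond a relative interior point of the facet $\Conv\{v_j/\gamma_j : j\in\mr J\}$, which is exactly your stacking step. The paper verifies (A1)--(A3) by direct linear algebra rather than by polarity and beneath--beyond theory. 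It also chooses $\mr J$ adaptively, as a facet realising the current maximum in (A4), and proves only that this maximum grows by one. You instead fix the stacking order in advance and compute distances explicitly; for $d\ge 3$ this works.

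\textbf{The gap at $d=2$.} In the plane the facet $F_i=\Conv\{v_{i-2},v_{i-1}\}$ is itself an edge, and putting $v_i$ beyond it \emph{removes} that edge. You list the new edges correctly, but you tacitly assume no old edge disappears. That holds only when each edge of $F_i$ lies in a ridge of $F_i$ (which then lies in another facet that $v_i$ is beneath), i.e.\ when $d\ge3$. Already for $N=3$ the graph is the $4$-cycle $0,1,3,2$, so $1\not\sim 2$. In general $\mathcal G$ is the $(N+1)$-cycle $0,1,3,5,\ldots,\ldots,6,4,2$, not the square of a path. So the description of $\mathcal G$ on which you base connectivity and the formula $d_{\mathcal G}(0,m)=\lceil m/d\rceil$ is false here. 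This case matters: it is the case $k=3$ of Theorem~\ref{S24}. The formula happens to survive, but your argument for it does not.

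\textbf{The repair.} For (A4) you need only the \emph{lower} bound on distances; this is also all the paper uses, since it shows only $\mathcal E'\cap\{0,\ldots,N\}^2\subseteq\mathcal E$, i.e.\ that distances cannot decrease.
\begin{itemize}
\item Every edge ever present, including those of the initial simplex, joins indices differing by at most $d$, and stacking can only delete old edges. So $\mathcal G$ is a subgraph of the graph $i\sim j\Leftrightarrow 1\le|i-j|\le d$.
\item Hence $d_{\mathcal G}(0,m)\ge\lceil m/d\rceil$. Your Hermite computation then gives $\sum_{j\in J}d_{\mathcal G}(j,0)\ge N$ for $J=\{N-d+1,\ldots,N\}$, which is still a facet.
\item Connectivity holds because the graph of a polytope is connected. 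For $d=2$ you can see it directly: the edges $\{0,1\}$ and $\{i-2,i\}$, $i\ge 2$, are never stacked on and so survive.
\end{itemize}

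\textbf{Minor point.} The non-degeneracy condition and the generic perturbation in Step~1 are superfluous. A vertex $v_k$ of $P$ lying in the affine hull of a facet $F$ lies in $P\cap\mathrm{aff}\,F=F$, and is then a vertex of $F$.
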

\begin{proof}
	We fix $d\geq 2$ and use induction on $N$. 
\begin{Ilist}
\item Base case $N=d$:

	For $N=d$ the corner points of a shifted $(d+1)$-simplex do the job: set 
	\begin{align}
		\label{S46}
		& c\DE\frac 1{d+1}\E[d],\qquad v_j\DE
		\begin{cases}
			-c \CAS j=0,
			\\
			e_j-c \CAS j\in\{1,\ldots,d\},
		\end{cases}
		\\
		\nonumber
		& \gamma_j\DE 1\text{ for }j\in\{0,\ldots,d\},
		\\[1mm]
		\nonumber
		& \mc J\DE\{J_0,\ldots,J_d\}\text{ with }J_k\DE\{0,\ldots,d\}\setminus\{k\}\text{ for }k\in\{0,\ldots,d\}.
	\end{align}
	We check that the data $v_0,\ldots,v_d$, $\gamma_0,\ldots,\gamma_d$, $\mc J$ has all required properties.
	\begin{IIlist}
	\item \textit{Property {\rm(A1)}}:
		Clearly, we have $|J_k|=d$ for all $k$. Let $k\in\{0,\ldots,d\}$. If $k\neq 0$, the linear space 
		$\Span\{v_j\DS j\in J_k\}$ contains the elements $\E[d]=-(d+1)(-c)$ and $e_j=(e_j-c)-(-c)$ for 
		$j\in\{1,\ldots,d\}\setminus\{k\}$. Thus it equals $\bb R^d$. The linear space $\{v_j\DS j\in J_0\}$ contains the
		elements $\E[d]=(d+1)\sum_{j=1}^d(e_j-c)$ and in turn all $e_j$. Thus it equals $\bb R^d$.
	\item \textit{Property {\rm(A2)}}:
		Set 
		\[
			y_l\DE
			\begin{cases}
				(d+1)^2\cdot c \CAS l=0,
				\\
				-(d+1)\cdot e_l \CAS l\in\{1,\ldots,d\}.
			\end{cases}
		\]
		Using that 
		\[
			(c,c)=\frac d{(d+1)^2},\quad (c,e_j)=\frac 1{d+1},
		\]
		we compute for $l,i\in\{0,\ldots,d\}$
		\[
			(y_l,v_i)=
			\begin{cases}
				-d \CAS l=0,i=0,
				\\
				1 \CAS l=0,i>0,
				\\
				1 \CAS l>0,i=0,
				\\
				1 \CAS l>0,i>0,\ l\neq i,
				\\
				-d \CAS l>0,i>0,\ l=i.
			\end{cases}
		\]
		We see that $y_l$ satisfies \cref{S52}, and thus $x_{J_l}=y_l$. We also see that {\rm(A2)} holds. 
	\item \textit{Property {\rm(A3)}}: 
		The linear map 
		\[
			\Phi\DF\left\{
			\begin{array}{rcl}
				\bb R^{d+1} & \to & \bb R^d
				\\
				(\lambda_j)_{j=0}^d & \mapsto & \sum_{j=0}^d\lambda_jv_j
			\end{array}
			\right.
		\]
		is surjective and $\Phi((1)_{j=0}^d)=0$. Thus, given $x\in\bb R^d$, we find
		$(\lambda_j)_{j=0}^d\in\bb R^{d+1}$ with 
		\[
			\Phi\big((\lambda_j)_{j=0}^d\big)=x\quad\text{and}\quad\min_{j=0,\ldots,d}\lambda_j=0.
		\]
		Choose $l$ such that $\lambda_l=0$. Then $x\in\Cone\{v_j\DS j\in J_l\}$.
	\item \textit{Property {\rm(A4)}}: 
		Since $d\geq 2$ each two vertices are connected by an edge. We see that $\mc G$ is connected and that
		\[
			d_{\mc G}(j,k)=
			\begin{cases}
				0 \CAS j=k,
				\\
				1 \CAS j\neq k.
			\end{cases}
		\]
		It follows that 
		\[
			\forall l\in\{0,\ldots,d\}\DP
			\sum_{j\in J_l}d_{\mc G}(j,k)=
			\begin{cases}
				d-1 \CAS k\neq l
				\\
				d \CAS k=l
			\end{cases}
		\]
	\end{IIlist}
\item Induction step $N\mapsto N+1$:

	Assume we have $N\geq d$ and $v_0,\ldots,v_N$, $\gamma_0,\ldots,\gamma_N$, $\mc J$ that satisfy 
	{\rm(A1)}, {\rm(A2)}, {\rm(A3)}, {\rm(A4)}. 
	The idea is to attach an additional simplex at an appropriate face of the polytope given by that data, 
	add the new corner point and create the new faces. To this end choose $\mr J\in\mc J$ and $\mr k\in\{0,\ldots,N\}$ such that 
	\[
		\sum_{j\in\mr J}d_{\mc G}(j,\mr k)\geq N.
	\]
	Now let $\varepsilon\geq 0$ be a parameter, and set 
	\begin{align*}
		& v_k'\DE
		\begin{cases}
			v_k\CAS k\in\{0,\ldots,N\},
			\\
			\sum_{j\in\mr J}v_j\CAS k=N+1,
		\end{cases}
		\\
		& \gamma_k'\DE
		\begin{cases}
			\gamma_k\CAS k\in\{0,\ldots,N\},
			\\
			\big(\sum_{j\in\mr J}\gamma_j\big)-\varepsilon\CAS k=N+1,
		\end{cases}
		\\
		& \mc J'\DE\big(\mc J\setminus\{\mr J\}\big)\cup\big\{J_l'\DS l\in\mr J\big\}\text{ where }
		J_l'\DE(\mr J\setminus\{l\})\cup\{N+1\}.
	\end{align*}
	We are going to show that for all sufficiently small positive $\varepsilon$ the data 
	$v_0',\ldots,v_{N+1}'$, $\gamma_0',\ldots,\gamma_{N+1}'$, $\mc J'$ has all required properties. 
	\begin{IIlist}
	\item \textit{Property {\rm(A1)}}: 
		For $J'\in\mc J\setminus\{\mr J\}$ the required property holds by the inductive hypothesis. 
		Let $l\in\mr J$. Since $|\mr J|=d$, also $|J_l|=d$. Since the coefficient of $v_l$ in $v_{N+1}$ is nonzero we have 
		\[
			\Span\{v_j\DS j\in J_l\}=\Span\{v_j\DS j\in\mr J\}=\bb R^d.
		\]
	\item \textit{Property {\rm(A2)}}: 
		We denote by $x_{J'}'$, $J'\in\mc J'$, the solutions of the systems \cref{S52} corresponding to the data 
		$v_j',\gamma_j',\mc J'$, and by $x_J$, $J\in\mc J$, the solutions of \cref{S52} corresponding to $v_j,\gamma_j,\mc J$. 
		Note that $x_{J'}'=x_{J'}$ if $J'\in\mc J\setminus\{\mr J\}$. 

		Let $J'\in\mc J'$, $i\in\{0,\ldots,N+1\}\setminus J'$. 
		The required property is checked by going through all possible cases. 
		\begin{IIlist}
		\item $J'\in\mc J\setminus\{\mr J\}$, $i\in\{0,\ldots,N\}$: 
			Then the inductive hypothesis yields 
			\[
				(x_{J'}',v_i')=(x_{J'},v_i)<\gamma_i=\gamma_i'.
			\]
		\item $J'\in\mc J\setminus\{\mr J\}$, $i=N+1$: 
			Since $\mr J\setminus J'\neq\emptyset$, we obtain
			\[
				(x_{J'}',v_{N+1}')=\sum_{j\in\mr J}(x_{J'},v_j)=
				\sum_{j\in\mr J\cap J'}\underbrace{(x_{J'},v_j)}_{=\gamma_j}+
				\sum_{j\in\mr J\setminus J'}\underbrace{(x_{J'},v_j)}_{<\gamma_j}
				<\sum_{j\in\mr J}\gamma_j.
			\]
			
			Therefore $(x_{J'}',v_{N+1}')<\gamma_{N+1}'$ for all sufficiently small $\varepsilon\geq 0$.
		\item $J'=J_l$ for some $l\in\mr J$, $i=l$:
			We compute 
			\[
				(x_{J'}',v_i')=\Big(x_{J_l}',v_{N+1}'-\sum_{j\in\mr J\setminus\{l\}}v_j\Big)=
				\underbrace{(x_{J_l}',v_{N+1}')}_{=\gamma_{N+1}}
				-\sum_{j\in\mr J\setminus\{l\}}\underbrace{(x_{J_l}',v_j)}_{=\gamma_j}=\gamma_l-\varepsilon.
			\]
			Therefore $(x_{J'}',v_i)<\gamma_i'$ for all $\varepsilon>0$.
		\item $J'=J_l$ for some $l\in\mr J$, $i\neq l$:
			Then $i\in\{0,\ldots,N\}\setminus\mr J$, and we write 
			\[
				(x_{J'}',v_i')=\big(x_{J_l}'-x_{\mr J},v_i\big)+\underbrace{(x_{\mr J},v_i)}_{<\gamma_i}.
			\]
			For the parameter $\varepsilon=0$ we have $x_{J_l}'=x_{\mr J}$, and hence 
			$\lim_{\varepsilon\to 0}x_{J_l}'=x_{\mr J}$. 
			Therefore $(x_{J'}',v_i)<\gamma_i'$ for all sufficiently small $\varepsilon\geq 0$.
		\end{IIlist}
	\item \textit{Property {\rm(A3)}}: 
		Assume $x\in\Cone\{v_j\DS j\in\mr J\}$, then $x=\sum_{j\in\mr J}\xi_jv_j$ with some $\xi_j\geq 0$. Choose 
		$l\in\mr J$ with $\xi_l=\min\{\xi_j\DS j\in\mr J\}$, then 
		\[
			x=\sum_{j\in\mr J\setminus\{l\}}(\xi_j-\xi_l)v_j+\xi_lv_{N+1}\in\Cone\{v_j\DS j\in J'_l\}.
		\]
		Thus $\Cone\{v_j\DS j\in\mr J\}\subseteq\bigcup_{l\in\mr J}\Cone\{v_j'\DS j\in J'_l\}$, and the inductive
		hypothesis yields
		\[
			\bb R^d
			=\Cone\{v_j\DS j\in\mr J\}\mkern7mu\cup\mkern-8mu
			\bigcup_{J\in\mc J\setminus\{\mr J\}}\mkern-10mu\Cone\{v_j\DS j\in J\}
			=\bigcup_{J\in\mc J'}\Cone\{v_j\DS j\in J\}.
		\]
	\item \textit{Property {\rm(A4)}}: 
		We denote by $\mc G$ the graph induced from $\mc J$ and by $\mc G'$ the graph induced from $\mc J'$. Furthermore, 
		let $\mc E$ and $\mc E'$ be the sets of edges of $\mc G$ and $\mc G'$, respectively, i.e. 
		\[
			\mc E\DE\bigcup_{J\in\mc J}(J\times J),\quad \mc E'\DE\bigcup_{J'\in\mc J'}(J'\times J').
		\]
		Observe that
		\[
			\mc E'\cap\{0,\ldots,N\}^2\subseteq\mc E,
		\]
		\[
			\forall i\in\{0,\ldots,N\}\DP (N+1,i)\in\mc E'\ \Leftrightarrow\ i\in\mr J.
		\]
		We first show that $\mc G'$ is connected. Let $i,j\in\{0,\ldots,N\}$ and choose a path $\pi$ in $\mc G$
		connecting $i$ and $j$. Substituting every edge $(k,l)\in\mr J\times\mr J$ by the sequence of edges 
		$(k,N+1),(N+1,l)$, we obtain a path $\pi'$ in $\mc G'$ connecting $i$ and $j$.
		The vertex $N+1$ is connected with every vertex in $\mr J$. 

		Second, we estimate $d_{\mc G'}$. Let $i,j\in\{0,\ldots,N\}$, and let $\pi'$ be a path in $\mc G'$ 
		connecting $i$ and $j$. If a sequence $(k,N+1),(N+1,l)$ occurs in $\pi'$, then $k,l\in\mr J$.
		Hence, we obtain a path $\pi$ in $\mc G$ connecting $i$ and $j$ by substituting each such sequence in $\pi'$ by the edge
		$(k,l)$. It follows that 
		\[
			\forall i,j\in\{0,\ldots,N\}\DP d_{\mc G}(i,j)\leq d_{\mc G'}(i,j)
		\]
		Consider $i\in\{0,\ldots,N\}$. Let $\pi'$ be a path in $\mc G'$ connecting $N+1$ and 
		$i$. Then there exists $j\in\mr J$ and a path $\pi'_0$ in $\mc G'$, such that $\pi'$ is the concatenation of the 
		edge $(N+1,j)$ with $\pi'_0$. It follows that 
		\[
			d_{\mc G'}(N+1,i)\geq 1+\min_{j\in\mr J}d_{\mc G'}(j,i)
			\geq 1+\min_{j\in\mr J}d_{\mc G}(j,i).
		\]
		Now we have all ingredients to finish the argument.
		Choose $l\in\mr J$ such that $d_{\mc G}(l,\mr k)=\min_{j\in\mr J}d_{\mc G}(j,\mr k)$. Then
		\begin{align*}
			\sum_{j\in J_l}d_{\mc G'}(j,\mr k)= &\, 
			\sum_{j\in\mr J\setminus\{l\}}d_{\mc G'}(j,\mr k)+d_{\mc G'}(N+1,\mr k)
			\\
			\geq &\, \sum_{j\in\mr J\setminus\{l\}}d_{\mc G}(j,\mr k)+\big(1+d_{\mc G}(l,\mr k)\big)
			=\sum_{j\in\mr J}d_{\mc G}(j,\mr k)+1\geq N+1.
		\end{align*}
	\end{IIlist}
\end{Ilist}
\end{proof}

\begin{Remark}
\label{S20}
	\phantom{}
	\begin{Elist}
	\item The conditions (A1), (A3), (A4) depend only on $v_0,\ldots,v_N$ and $\mc J$. 
		The values of $\gamma_0,\ldots,\gamma_N$ enter only in (A2). 
	\item Assume we have $v_0,\ldots,v_N\in\bb R^d$ and $\mc J\subseteq\mc P(\{0,\ldots,N\})\setminus\{\emptyset\}$ 
		such that (A1), (A3), (A4) hold. Then the set of all $\gamma_0,\ldots,\gamma_N>0$ 
		such that (A2) holds is open. 
		This follows since $x_J$ depends continuously on $\gamma_0,\ldots,\gamma_N$, and strict inequality is required in {\rm(A2)}.
	\item The condition (A3) ensures that the convex set $K$ from \cref{S54} is bounded. 
		To see this, assume towards a contradiction that there exists a sequence $(x_n)_{n\in\bb N}$ in $K$ with 
		$\|x_n\|\geq n$. By passing to a subsequence if necessary we may assume w.l.o.g.\ that there exists $J\in\mc J$
		such that $x_n\in\Cone\{v_j\DS j\in J\}$ for all $n$ and that the limit 
		$y\DE\lim_{n\to\infty}\frac{x_n}{\|x_n\|}$ exists. Then also $y\in\Cone\{v_j\DS j\in J\}$.
		Write 
		\[
			\frac{x_n}{\|x_n\|}=\sum_{j\in J}\beta_{n,j}v_j,\quad y=\sum_{j\in J}\beta_jv_j.
		\]
		The map $\varphi\DF(\beta_j)_{j\in J}\mapsto\sum_{j\in J}\beta_jv_j$ is a linear bijection of $\bb R^d$ onto
		itself, hence a homeomorphism. Thus 
		\[
			\forall j\in J\DP \lim_{n\to\infty}\beta_{n,j}=\beta_j
		\]
		Using \cref{S34} we obtain 
		\[
			n\leq\|x_n\|=\Big(x_n,\frac{x_n}{\|x_n\|}\Big)\leq\sum_{j\in J}\beta_{n,j}\gamma_j.
		\]
		The right side converges to $\sum_{j\in J}\beta_j\gamma_j$ while the left side is unbounded, 
		and we have reached a contradiction.
	\end{Elist}
\end{Remark}

\begin{proof}[Proof of ``\,$\supseteq$'' in \Cref{S24}]
	Let $H\subseteq\bb R^n$ with $\delta(H)\leq k-1$ be given. The construction of $\Phi\in\HomCSL(\mc Cn,\mc Ck)$ with 
	${\sf H}(\Phi)=H+\Span\{\E[n]\}$ will be carried out in four steps:
	\begin{Itemize}
	\item In Step~\ding{192} we employ \Cref{S19} to built an appropriate base shape. 
	\item In Step~\ding{193} we modify the base shape to construct convex bodies 
		$K_1,\ldots,K_n\in\Moewe[35][-15][9.16]{\bb R^{k-1}}$.
	\item In Step~\ding{194} we show that the homomorphism $\Phi_0\in\HomCSL(\mc Cn,\Moewe[35][-15][9.16]{\bb R^{k-1}})$ with
		$\Phi_0(\{e_j\})=K_j$, $j=1,\ldots,n$, satisfies ${\sf H}(\Phi_0)=H+\Span\{b\}$. 
	\item In Step~\ding{195} we finish the argument by making a minor modificationto consider $\Phi_0$ as a homomorphism into $\mc Ck$. 
	\end{Itemize}
	Let us now proceed to the details of the proof.

	The starting point is to apply \Cref{S18} with the set $H$. This yields $T\in\bb N$ and linearly independent sets 
	$M_0,\ldots,M_T\subseteq\bb R^n$ such that 
	\[
		H=\bigcup_{t=0}^T\Cone M_t.
	\]
	Clearly, $d_t\DE|M_t|\leq k-1$ for all $t$.
	\begin{Steps}
	\item
	We use the number $T$ from above and apply \Cref{S19} with $d\DE k-1$ and $N\DE(4T+1)(k-1)$. This yields
	\begin{equation}
	\label{S21}
		v_0,\ldots,v_N\in\bb R^d,\quad \gamma_0,\ldots,\gamma_N>0,\quad 
		\mc J\subseteq\mc P(\{0,\ldots,N\})\setminus\{\emptyset\},
	\end{equation}
	that satisfy {\rm(A1)}--{\rm(A4)}. 
	By \Cref{S20}(ii) we find $\varepsilon>0$ such that $v_0,\ldots,v_N$ and $\mc J$ from \cref{S21} together with any 
	$\gamma_0',\ldots,\gamma_N'>0$ subject to
	\[
		\forall l\in\{0,\ldots,N\}\DP |\gamma_l'-\gamma_l|\leq\varepsilon
	\]
	fullfill {\rm(A2)}. Let us point out that these data do not depend on the sets $M_0,\ldots,M_T$ but only on their number $T$.

	Choose $J\in\mc J$ and $i\in\{0,\ldots,N\}$ such that 
	\[
		\sum_{j\in J}d_{\mc G}(j,i)\geq N.
	\]
	Since $|J|=d$, at least one summand in this sum must be larger or equal to $\frac Nd=4T+1$. Now choose $i'\in\{0,\ldots,N\}$ with 
	$d_{\mc G}(i',i)=4T$, and let $i_1,\ldots,i_{4T-1}\in\{0,\ldots,N\}$ such that, with $i_0\DE i'$, $i_{4T}\DE i$,
	\[
		(i_0,i_1),(i_1,i_2),\ldots,(i_{4T-2},i_{4T-1}),(i_{4T-1},i_{4T})
	\]
	forms a shortest path in $\mc G$ connecting $i'$ and $i$. For $t\in\{0,\ldots,T\}$ choose $J_t\in\mc J$ with 
	$i_{4t}\in J_t$.

	We have $d_{\mc G}(i_s,i_{s'})=|s-s'|$ for $s,s'\in\{0,\ldots,4T\}$, and it follows that
	\[
		\forall t,t'\in\{0,\ldots,T\},t\neq t'\DP d_{\mc G}(i_{4t},i_{4t'})\geq 4.
	\]
	and in turn
	\[
		\forall t,t'\in\{0,\ldots,T\},t\neq t'\DQ\forall j\in J_t,j'\in J_{t'}\DP d_{\mc G}(j,j')\geq 2.
	\]
	Equivalently we can say that each $J\in\mc J$ intersects at most one of the sets $J_0,\ldots,J_T$. In particular, the
	sets $J_t$ are pairwise disjoint.
	\item
	Write $M_t=\{x_{t,1},\ldots,x_{t,d_t}\}$ and $x_{t,i}=(\xi_{t,i}^{(j)})_{j=1}^n$. We can multiply each $x_{t,i}$ 
	with some positive constant without changing the properties of $M_t$. Hence, we may assume w.l.o.g.\ that 
	\[
		\forall t\in\{0,\ldots,T\},i\in\{1,\ldots,d_t\},j\in\{1,\ldots,n\}\DP 
		\big|\xi_{t,i}^{(j)}\big|\leq\varepsilon.
	\]
	Write $J_t=\{l_{t,1},\ldots,l_{t,d}\}$ and recall that $d_t\leq k-1=d$ for all $t$. We define for $j\in\{1,\ldots,n\}$ 
	and $l\in\{0,\ldots,N\}$
	\[
		\gamma_l^{(j)}\DE
		\begin{cases}
			\gamma_l+\xi_{t,i}^{(j)} \CAS l=l_{t,i}\text{ for some }t\in\{0,\ldots,T\},i\in\{1,\ldots,d_t\}
			\\
			\gamma_l \CASO
		\end{cases}
	\]
	and set 
	\[
		K_j\DE\big\{x\in\bb R^d\DS \forall l\in\{0,\ldots,N\}\DP(x,v_l)\leq\gamma_l^{(j)}\big\}.
	\]
	\item
	Let $\Phi_0\in\HomCSL(\mc Cn,\Moewe[20][0][9.16]{\bb R^d})$ be the homomorphism with $\Phi_0(\{e_j\})=K_j$, 
	$j=1,\ldots,n$. In order to compute ${\sf H}(\Phi_0)$ we use the support function representation 
	$\rho_d\DF\Moewe[18][2]{\bb R^d}\to\bb R^{(\bb R^d)}$. The map $h_{\rho_d\circ\Phi_0}$ acts as 
	\[
		h_{\rho_d\circ\Phi_0}(\omega)=\big(\sigma(K_j,\omega)\big)_{j=1}^n.
	\]
	The action of the support function is known from \cref{S34}, and 
	we obtain for every $J\in\mc J$ and $(\beta_l)_{l\in J}\in[0,\infty)^J$ 
	\begin{align*}
		h_{\rho_d\circ\Phi_0}\Big(\sum_{l\in J}\beta_lv_l\Big)= &\, \Big(\sum_{l\in J}\beta_l\gamma_l^{(j)}\Big)_{j=1}^n
		\\
		= &\, \Big[\sum_{l\in J}\beta_l\gamma_l\Big]\cdot\E[n]+
		\sum_{t=0}^T \sum\limits_{\substack{i\in\{1,\ldots,d_t\}\\ l_{t,i}\in J}}\beta_{l_{t,i}} 
		\underbrace{\big(\xi_{t,i}^{(j)}\big)_{j=1}^n}_{=x_{t,i}}
	\end{align*}
	Since $J$ intersects at most one set $J_t$, $t\in\{0,\ldots,T\}$, we obtain 
	\[
		h_{\rho_d\circ\Phi_0}\big(\Cone\{v_j\DS j\in J\}\big)\subseteq
		\Span\{\E[n]\}+\bigcup_{t=0}^T\Cone\{x_{t,1},\ldots,x_{t,d_t}\}=H+\Span\{\E[n]\}.
	\]
	Property {\rm(A3)} yields 
	\[
		h_{\rho_d\circ\Phi_0}(\bb R^d)=\bigcup_{J\in\mc J} h_{\rho_d\circ\Phi_0}\big(\Cone\{v_j\DS j\in J\}\big)\subseteq 
		H+\Span\{\E[n]\}.
	\]
	On the other hand, 
	\begin{align*}
		\Cone\{x_{t,1},\ldots,x_{t,d_t}\}\subseteq &\, 
		h_{\rho_d\circ\Phi_0}\big(\Cone\{v_j\DS j\in J_t\}\big)+\Span\{\E[n]\}
		\\
		\subseteq &\, h_{\rho_d\circ\Phi_0}(\bb R^d)+\Span\{\E[n]\},
	\end{align*}
	and therefore
	\[
		H=\bigcup_{t=0}^T\Cone\{x_{t,1},\ldots,x_{t,d_t}\}\subseteq h_{\rho_d\circ\Phi_0}(\bb R^d)+\Span\{\E[n]\}.
	\]
	Together, thus 
	\[
		h_{\rho_d\circ\Phi_0}(\bb R^d)+\Span\{\E[n]\}=H+\Span\{\E[n]\},
	\]
	and since $h_{\rho_d\circ\Phi_0}$ is piecewise linear we can repeat the argument that led to \cref{S93} and obtain
	\[
		{\sf H}(\Phi_0)=h_{\rho_d\circ\Phi_0}\big(\bb R^d\big)+\Span\{\E[n]\}.
	\]
	\item
	To complete the proof it remains to add an affine transformation in order to change the codomain of $\Phi_0$. Let $v_j$, 
	$j\in\{0,\ldots,d\}$ be as in \cref{S46}. The set $\bigcup_{j=1}^n K_j$ is bounded, and hence we can choose $\mu>0$ such that 
	\[
		\bigcup_{j=1}^n K_j\subseteq\Conv\big\{\mu v_j\DS j\in\{0,\ldots,d\}\big\}\ED S.
	\]
	Therefore, we can consider $\Phi_0$ as an element of $\HomCSL(\mc Cn,\Moewe[12][8][8.33]{S})$. 

	The maps $\kappa_k\DF\mc Dk\to\Conv(\{0\}\cup\{e_1,\ldots,e_d\})\subseteq\bb R^d$ and 
	\[
		\alpha\DF
		\left\{
		\begin{array}{rcl}
			\Conv\big(\{0\}\cup\{e_1,\ldots,e_d\}\big) & \to & S
			\\
			x & \mapsto & \mu(x-c)
		\end{array}
		\right.
	\]
	are isomorphisms of convex algebras. Thus 
	$\Moewe[14][6][6.25]{\alpha}\circ\Moewe[18][2][6.25]{\kappa_k}\DF\mc Ck\to\Moewe[12][8][8.33]{S}$ is an isomorphism of convex 
	semilattices. Now set 
	\[
		\Phi\DE(\,\Moewe[14][6][6.25]{\alpha}\circ\Moewe[18][2][6.25]{\kappa_k})^{-1}\circ\Phi_0.
	\]
	Then $\Phi\in\HomCSL(\mc Cn,\mc Ck)$ and by \Cref{S30} we have ${\sf H}(\Phi)={\sf H}(\Phi_0)$.
	\end{Steps}
\end{proof}

\noindent

We finish with a corollary of \Cref{S24} which gives information about the behaviour of $\mc H(n,k)$ with varying $k$. 

\begin{corollary}
\label{S67}
	Let $n\in\bb N\setminus\{0\}$. Then 
	\[
		\mc H(n,1)\subsetneq\ldots\subsetneq\mc H(n,n)=\mc H(n,n+1)=\ldots
	\]
\end{corollary}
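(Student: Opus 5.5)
The plan is to read everything off the explicit description in \Cref{S24}, splitting by whether $n\le 2$ or $n\geq 3$. Write
\[
	R(n,k)\DE\big\{\tilde H\in\mc P(\bb R^n)\setminus\{\emptyset\}\DSb \tilde H=\tilde H+\Span\{\E[n]\}\wedge\delta(\tilde H)\leq k\big\}
\]
for the right-hand side of \cref{S58}. The inclusions $\mc H(n,k)\subseteq\mc H(n,k+1)$ are already known, since $\mc Ck$ embeds into $\mc C(k+1)$. It therefore suffices to prove two things: that $\mc H(n,k)$ stabilises for $k\geq n$, and that the inclusions are strict for $k<n$.

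\emph{Stabilisation.} By \Cref{S42}(ii), every $\tilde H$ with $\delta(\tilde H)<\infty$ satisfies $\delta(\tilde H)\leq n$. Hence $R(n,k)=R(n,n)$ for all $k\geq n$. For $n\geq 3$ and $k\geq n\geq 3$ we have $k\neq 2$, so \Cref{S24} gives $\mc H(n,k)=R(n,k)=R(n,n)=\mc H(n,n)$. For $n\in\{1,2\}$, \Cref{S24} applies for every $k$, so the same argument works.

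\emph{Strictness for $k<n$.} I will exhibit an element of $\mc H(n,k+1)\setminus\mc H(n,k)$ for each $1\leq k\leq n-1$. The natural candidate is a linear subspace $V\subseteq\bb R^n$ with $\E[n]\in V$ and $\dim V=k+1$. It satisfies $V=V+\Span\{\E[n]\}$, and by \Cref{S43} we have $\delta(V)=k+1$.

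First, $V\in\mc H(n,k+1)$. If $k+1\neq 2$, or if $n\leq 2$, this follows directly from \Cref{S24}. If $k+1=2$ and $n\geq 3$, we have $V=\Cone\{y\}\cup\Cone\{-y\}+\Span\{\E[n]\}$ for some $y$, so $V\in\mc H(n,2)$ by \Cref{S55}(iii).

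Second, $V\notin\mc H(n,k)$. In every case $\mc H(n,k)\subseteq R(n,k)$: either \Cref{S24} gives equality, or, when $k=2$ and $n\geq 3$, it gives the proper inclusion. Since $\delta(V)=k+1>k$, we get $V\notin R(n,k)$.

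The only step needing care is this $k=2$ exception, i.e.\ checking that the inclusion direction of \Cref{S24} in the case $n\geq 3$, $k=2$ is still enough to exclude $V$. It is, because $\mc H(n,2)\subsetneq R(n,2)$ still gives $\mc H(n,2)\subseteq R(n,2)$. Beyond that, the argument is bookkeeping of the cases of \Cref{S24} and \Cref{S55}.
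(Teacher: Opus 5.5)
Your proof is correct and follows essentially the same route as the paper: the paper's one-line proof combines \Cref{S55}, \cref{S58} and \Cref{S39}, and you likewise rely on \cref{S58} plus \Cref{S55} for the exceptional case $k=2$, $n\geq 3$. The only difference is cosmetic: you certify strictness with linear subspaces $V\ni\E[n]$ via \Cref{S43}, and stabilisation via \Cref{S42}(ii), where the paper cites \Cref{S39}.
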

\begin{proof}
	This is seen by putting together \Cref{S55}, \cref{S58}, and \Cref{S39}.
\end{proof}

\section{Two notions of dimension}

We study two notions of dimension for convex semilattices.

\begin{Definition}
\label{S23}
	Let $\bb X=\langle X,+_p,\oplus\rangle$ be a convex semilattice. We define the \emph{embedding dimension} of $\bb X$ as the number
	\[
		\Edim\bb X\DE\inf\big\{k\in\bb N\setminus\{0\}\DS
		\exists \tau\in\HomCSL(\bb X,\mc Ck)\DP\tau\text{ injective}\big\},
	\]
	where the infimum of the empty set is understood as $\infty$.
\end{Definition}

\begin{Definition}
\label{S69}
	Let $\bb X=\langle X,+_p,\oplus\rangle$ be a convex semilattice. We define the \emph{generator dimension} of $\bb X$ as the number
	\[
		\Gdim\bb X\DE\inf\big\{n\in\bb N\setminus\{0\}\DS
		\exists \tau\in\HomCSL(\mc Cn,\bb X)\DP\tau\text{ surjective}\big\},
	\]
	where the infimum of the empty set is understood as $\infty$.
\end{Definition}

\noindent
Observe that
\begin{equation}
\label{S29}
	\Edim\bb X=1\ \Leftrightarrow\ |X|=1\ \Leftrightarrow\ \Gdim\bb X=1.
\end{equation}

\subsection{About the embedding dimension}

Since $\mc Ck\in\mc W$ (recall \Cref{S80}) the embedding dimension $\Edim\bb X$ can be finite only if $\bb X\in\mc W$. 

For a finitely generated convex semilattice $\bb X$ we can use \Cref{S3} and \Cref{S24} to compute $\Edim\bb X$. 

\begin{Theorem}
\label{S35}
	Let $\bb X\in\mc W$ and assume that we have $n\in\bb N\setminus\{0\}$ and a 
	surjective homomorphism $\Lambda\in\HomCSL(\mc Cn,\bb X)$. 
	\begin{Enumerate}
	\item If $\delta({\sf H}(\Lambda))\neq 2$, then $\Edim\bb X=\delta({\sf H}(\Lambda))$.
	\item If $\delta({\sf H}(\Lambda))=2$, then 
		\[
			\Edim\bb X= 
			\begin{cases}
				2 \CAS\ 
				\parbox[t]{90mm}{${\sf H}(\Lambda)=\big(\Cone\{y_1\}\cup\Cone\{y_2\}\big)+\Span\{\E[n]\}$\\
				with some $y_1,y_2\in\bb R^n$},
				\\[2mm]
				3 \CASO.
			\end{cases}
		\]
	\end{Enumerate}
\end{Theorem}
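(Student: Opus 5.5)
The plan is to reduce the embedding problem to a factorisation problem through $\Lambda$ and then read off the answer from \Cref{S24}. Since $\Lambda$ is surjective, an injective homomorphism $\tau\in\HomCSL(\bb X,\mc Ck)$ gives $\Phi\DE\tau\circ\Lambda\in\HomCSL(\mc Cn,\mc Ck)$. This $\Phi$ satisfies $\ker\Phi=\ker\Lambda$, and by \Cref{S30} therefore ${\sf H}(\Phi)={\sf H}(\Lambda)$. Conversely, if some $\Phi\in\HomCSL(\mc Cn,\mc Ck)$ has ${\sf H}(\Phi)={\sf H}(\Lambda)$, then \Cref{S31}(ii) (with $\bb Y\DE\mc Ck\in\mc W$) yields an injective $\Psi\in\HomCSL(\bb X,\mc Ck)$ with $\Phi=\Psi\circ\Lambda$. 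This gives the key reformulation
\[
	\Edim\bb X=\inf\big\{k\in\bb N\setminus\{0\}\DS {\sf H}(\Lambda)\in\mc H(n,k)\big\}.
\]

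Next I plug in \Cref{S24}. Any ${\sf H}(\Lambda)$ satisfies ${\sf H}(\Lambda)={\sf H}(\Lambda)+\Span\{\E[n]\}$, since it equals some $H(\tau\circ\Lambda)$ and these sets are invariant under adding multiples of $\E[n]$.

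For $k\neq 2$, or for $n\leq 2$, membership ${\sf H}(\Lambda)\in\mc H(n,k)$ is equivalent to $\delta({\sf H}(\Lambda))\leq k$. For $k=2$ and arbitrary $n$, \Cref{S55}(iii) describes $\mc H(n,2)$ as the family of sets $(\Cone\{y_1\}\cup\Cone\{y_2\})+\Span\{\E[n]\}$. Note also that $\mc H(n,1)=\{\Span\{\E[n]\}\}$, and by \cref{S56} this is exactly the set with $\delta=1$.

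Item (i) then splits into cases according to $\delta\DE\delta({\sf H}(\Lambda))$.
\begin{Ilist}
\item If $\delta=1$, then $k=1$ works, so $\Edim\bb X=1$.
\item If $3\leq\delta<\infty$, then $k=1$ fails. The value $k=2$ also fails: by \Cref{S55}(iii) every member of $\mc H(n,2)$ is of the form $H+\Span\{\E[n]\}$ with $\delta(H)\leq 1$, hence has $\delta\leq 2$ by \Cref{S42}(iii). For $k\geq 3$ the criterion $\delta\leq k$ applies, so the minimum is $\delta$.
\item If $\delta=\infty$, no $k$ works (\Cref{S24}, together with the $k=2$ remark above), and $\Edim\bb X=\infty$.
\item The case $\delta=0$ cannot occur, because ${\sf H}(\Lambda)\supseteq\Span\{\E[n]\}$.
\end{Ilist}

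For item (ii), take $\delta=2$. Then $k=1$ fails, since $\mc H(n,1)$ only contains a set with $\delta=1$. The value $k=2$ works exactly when ${\sf H}(\Lambda)$ has the two-cone form, by \Cref{S55}(iii). Otherwise $k=3$ works by \Cref{S24}, because $3\neq 2$ and $\delta\leq 3$. This gives the stated formula.

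The main subtlety, rather than a real obstacle, is the bookkeeping at $k=2$. For $n\geq 3$, \Cref{S24} only gives a strict inclusion at $k=2$, so I must use the explicit description in \Cref{S55}(iii) there and not the $\delta$-criterion. I also need to check that this description is consistent for $n\leq 2$; in that range \Cref{S24} already gives equality, so the two descriptions agree.
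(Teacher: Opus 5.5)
Your proposal is correct and follows essentially the same route as the paper. Your key reformulation is exactly \Cref{S28}, proved the same way via \Cref{S30} and \Cref{S31}(ii), and $\Edim\bb X$ is then read off from \Cref{S24}, \Cref{S55} and \cref{S56}; the only cosmetic difference is that the paper packages the case analysis as $\delta({\sf H}(\Lambda))\leq\Edim\bb X\leq\max\{3,\delta({\sf H}(\Lambda))\}$, using the inclusion ``$\subseteq$'' in \cref{S58} (valid also at $k=2$) where you derive $\delta\leq 2$ for members of $\mc H(n,2)$ from \Cref{S55}(iii) and \Cref{S42}(iii).
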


\noindent
All possible values may occur. We refer to: 
\Cref{S40} for $\Edim\bb X=\delta({\sf H}(\Lambda))\in\bb N\setminus\{0\}$, 
\Cref{S48} for $\Edim\bb X=3$ while $\delta({\sf H}(\Lambda))=2$, 
and \Cref{S105} for $\Edim\bb X=\delta({\sf H}(\Lambda))=\infty$.

Reading \Cref{S35} in another way we see that the number $\delta({\sf H}(\Lambda))$ is determined by $\Edim\bb X$ unless 
$\Edim\bb X=3$ in which case we only know that $\delta({\sf H}(\Lambda))\in\{2,3\}$. 
We do not have an example of a convex semilattice $\bb X$ with $\Edim\bb X=3$ and surjective homomorphisms 
$\Lambda\in\HomCSL(\mc Cn,\bb X)$ and $\Lambda'\in\HomCSL(\mc Cn',\bb X)$ such that $\delta({\sf H}(\Lambda))=2$ and 
$\delta({\sf H}(\Lambda'))=3$.

The connection with the results from the previous sections is made as follows.

\begin{lemma}
\label{S28}
	Let $\bb X\in\mc W$ and assume that we have $n\in\bb N\setminus\{0\}$ and a 
	surjective homomorphism $\Lambda\in\HomCSL(\mc Cn,\bb X)$. Then
	\begin{multline*}
		\big\{k\in\bb N\setminus\{0\}\DS\exists \Psi\in\HomCSL(\bb X,\mc Ck)\text{ injective}\big\}
		\\
		=\big\{k\in\bb N\setminus\{0\}\DS {\sf H}(\Lambda)\in\mc H(n,k)\big\}.
	\end{multline*}
\end{lemma}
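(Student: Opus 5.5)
We prove the two inclusions separately, using \Cref{S31} as the main tool. The point to keep in view throughout is that ${\sf H}(\Lambda)$ encodes $\ker\Lambda$, and, because $\Lambda$ is surjective, homomorphisms out of $\bb X$ correspond to homomorphisms out of $\mc Cn$ whose kernel contains $\ker\Lambda$.

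\textbf{Inclusion ``$\subseteq$''.} Let $k$ be such that an injective $\Psi\in\HomCSL(\bb X,\mc Ck)$ exists. Put $\Phi\DE\Psi\circ\Lambda\in\HomCSL(\mc Cn,\mc Ck)$. Since $\Psi$ is injective, $\ker\Phi=\ker\Lambda$. By \Cref{S30} this gives ${\sf H}(\Phi)={\sf H}(\Lambda)$; alternatively, \Cref{S31}(ii) yields the same conclusion directly. Both $\bb X$ and $\mc Ck$ lie in $\mc W$, so these results apply. Hence ${\sf H}(\Lambda)={\sf H}(\Phi)\in\mc H(n,k)$.

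\textbf{Inclusion ``$\supseteq$''.} Let ${\sf H}(\Lambda)\in\mc H(n,k)$. By definition of $\mc H(n,k)$ there is $\Phi\in\HomCSL(\mc Cn,\mc Ck)$ with ${\sf H}(\Phi)={\sf H}(\Lambda)$. The hypotheses of \Cref{S31}(ii) hold: $\Lambda$ is surjective, $\bb X\in\mc W$ by assumption, and $\mc Ck\in\mc W$ because $\mc Ck$ embeds into $\bb R^{(\bb R^{k-1})}$ via the support function representation of \Cref{S74}. \Cref{S31}(ii) therefore provides an injective $\Psi\in\HomCSL(\bb X,\mc Ck)$ with $\Phi=\Psi\circ\Lambda$. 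Hence $k$ belongs to the left-hand set.

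Both directions are thus short formal consequences of the factorisation criterion, and I expect no real obstacle. The only point needing care is checking that every algebra involved lies in $\mc W$, so that ${\sf H}$ is defined and \Cref{S30} and \Cref{S31} apply; this is exactly the membership $\mc Ck\in\mc W$ noted above.
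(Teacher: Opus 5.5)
Your proposal is correct and essentially matches the paper's proof. The forward inclusion sets $\Phi=\Psi\circ\Lambda$, notes $\ker\Phi=\ker\Lambda$ and applies \Cref{S30}; the reverse inclusion picks $\Phi\in\HomCSL(\mc Cn,\mc Ck)$ with ${\sf H}(\Phi)={\sf H}(\Lambda)$ and applies \Cref{S31}(ii), and your explicit check that $\mc Ck\in\mc W$ is a small detail the paper leaves implicit.
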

\begin{proof}
	Let $k\in\bb N\setminus\{0\}$. We read a diagram of the form
	\[
		\begin{tikzcd}[column sep=large]
			\mc Cn \arrow[r,two heads,"\Lambda"] \arrow[rd,dashed,swap,"\Phi"]
			& \bb X \arrow[d,hookrightarrow,dashed,"\Psi"]
			\\
			& \mc Ck
		\end{tikzcd}
	\]
	in two ways. 

	Assume that we have an injective homomorphism $\Psi\DF\bb X\to\mc Ck$, and set $\Phi\DE\Psi\circ\Lambda$. Then 
	$\ker\Lambda=\ker\Phi$, and by \Cref{S30} thus 
	\[
		{\sf H}(\Lambda)={\sf H}(\Phi)\in\mc H(n,k).
	\]
	Assume that ${\sf H}(\Lambda)\in\mc H(n,k)$. Choose $\Phi\in\HomCSL(\mc Cn,\mc Ck)$ with ${\sf H}(\Phi)={\sf H}(\Lambda)$. 
	\Cref{S31}(ii) provides an injective homomorphism $\Psi\DF\bb X\to\mc Ck$. 
\end{proof}

\begin{proof}[Proof of \Cref{S35}]
	Note that the set ${\sf H}(\Lambda)$ satisfies \cref{S36} and that always $\delta({\sf H}(\Lambda))\geq 1$. 
	\begin{Ilist}
	\item We show that $\Edim\bb X\geq\delta({\sf H}(\Lambda))$. If $\Edim\bb X=\infty$ there is nothing to prove, hence assume that 
		$\Edim\bb X<\infty$. By \Cref{S28} we have ${\sf H}(\Lambda)\in\mc H(n,\Edim\bb X)$. 
		\Cref{S24} implies that $\delta({\sf H}(\Lambda))\leq\Edim\bb X$.
	\item We show that $\Edim\bb X\leq\max\{3,\delta({\sf H}(\Lambda))\}$. If $\delta({\sf H}(\Lambda))=\infty$ there is nothing to 
		prove, hence assume that $\delta({\sf H}(\Lambda))<\infty$. Set $k\DE\max\{3,\delta({\sf H}(\Lambda))\}$, 
		then \Cref{S24} yields ${\sf H}(\Lambda)\in\mc H(n,k)$. By \Cref{S28}, thus, 
		$\Edim\bb X\leq k$.
	\item Assume that $\delta({\sf H}(\Lambda))=1$. Then ${\sf H}(\Lambda)\notin\mc H(n,1)$. 
		By \cref{S56} we have ${\sf H}(\Lambda)=\Span\{\E[n]\}$ and \Cref{S55}(i) yields 
		${\sf H}(\Lambda)\in\mc H(n,1)$. Referring again to \Cref{S28} we obtain $\Edim\bb X=1$. 
	\item Assume that $\delta({\sf H}(\Lambda))=2$. By \Cref{S28} and \Cref{S55}(iii) we have 
		\begin{align*}
			\Edim\bb X=2 &\ \Leftrightarrow\ {\sf H}(\Lambda)\in\mc H(n,2)
			\\
			&\ \Leftrightarrow\ 
			\exists y_1,y_2\in\bb R^n\DP {\sf H}(\Lambda)=\big(\Cone\{y_1\}\cup\Cone\{y_2\}\big)+\Span\{\E[n]\}
		\end{align*}
	\end{Ilist}
\end{proof}

\noindent
As a first application of \Cref{S35} we compute $\Edim\mc Cn$.

\begin{proposition}
\label{S40}
	Let $n\in\bb N\setminus\{0\}$. Then $\Edim\mc Cn=n$. 
\end{proposition}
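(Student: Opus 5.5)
We apply \Cref{S35} with $\bb X\DE\mc Cn$ and $\Lambda\DE\Id$, the identity map of $\mc Cn$, which is clearly surjective. Since $\mc Cn\in\mc W$, everything reduces to computing ${\sf H}(\Id)$ and its value $\delta({\sf H}(\Id))$.

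To compute ${\sf H}(\Id)$ we choose the injective homomorphism $\tau\DE\rho_n\colon\mc Cn\to\bb R^{(\bb R^n)}$, i.e.\ the support function representation, which is injective by \Cref{S74}. By \Cref{S83}(ii) we have $h_{\rho_n}(\omega)=\omega$ for $\omega\in\bb R^n$, and hence
\[
	{\sf H}(\Id)=H(\rho_n)=\bb R^n .
\]
By \Cref{S43}, applied to the linear subspace $\bb R^n$, this gives $\delta({\sf H}(\Id))=n$.

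The case distinction then follows from \Cref{S35}.
\begin{Ilist}
\item If $n\neq 2$, then $\delta({\sf H}(\Id))=n\neq 2$, and \Cref{S35}(i) gives $\Edim\mc Cn=n$ directly.
\item If $n=2$, then $\delta=2$, and we must check which alternative of \Cref{S35}(ii) holds. Take $y_1\DE e_1$ and $y_2\DE e_2$. Then
\[
	\big(\Cone\{e_1\}\cup\Cone\{e_2\}\big)+\Span\{\E[2]\}
\]
equals $\{\alpha\geq\beta\}\cup\{\alpha\leq\beta\}=\bb R^2$. Indeed, $\lambda e_1+\mu\E[2]$ covers all points with $\alpha\geq\beta$, and symmetrically $\lambda e_2+\mu\E[2]$ covers all points with $\alpha\leq\beta$. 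Hence the first alternative applies and $\Edim\mc C2=2$.
\end{Ilist}

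Alternatively, the case $n=2$ can be argued directly. The identity gives $\Edim\leq 2$, and \eqref{S29} gives $\Edim\neq 1$ because $|\mc C2|>1$.

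None of these steps is a real obstacle. The only points needing care are identifying ${\sf H}(\Id)$ correctly through the choice $\tau=\rho_n$ and handling the exceptional case $n=2$ in \Cref{S35}(ii). For $n=1$ the argument is consistent: ${\sf H}=\bb R=\Span\{\E[1]\}$, $\delta=1$, and $\Edim\mc C1=1$.
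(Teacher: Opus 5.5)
Your proof is correct and follows essentially the paper's route. You compute ${\sf H}(\Id_{\mc Cn})=\bb R^n$, obtain $\delta=n$ from \Cref{S43}, and invoke \Cref{S35}; the paper computes ${\sf H}(\Id_{\mc Cn})$ via \Cref{S44} applied to $\Id_{\mc Dn}$, whose proof is exactly your computation with $\rho_n$ and \Cref{S83}(ii). The only difference is that the paper takes the upper bound $\Edim\mc Cn\leq n$ from the identity witness and uses \Cref{S35} only for the lower bound $\Edim\mc Cn\geq\delta({\sf H}(\Id))$, so your separate check of the $n=2$ alternative is not needed, as your closing remark already notes.
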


\noindent
In the proof we use the following lemma.

\begin{lemma}
\label{S44}
	Let $\varphi\in\HomCA(\mc Dn,\mc Dk)$. Then ${\sf H}(\,\Moewe[12][8][5.9]\varphi\,)$ is a linear subspace of $\bb R^n$ and 
	\begin{equation}
	\label{S70}
		\dim\big({\sf H}(\,\Moewe[12][8][5.9]\varphi\,)\big)=\dim\Big(\Span\big\{\varphi(e_j)\DS j\in\{1,\ldots,n\}\big\}\Big).
	\end{equation}
\end{lemma}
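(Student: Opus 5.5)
We compute ${\sf H}(\,\Moewe[12][8][5.9]\varphi\,)$ directly, using the support-function representation of $\mc Ck$. Take $\tau\DE\rho_k|_{\mc Ck}$, which is injective by \Cref{S74}, so ${\sf H}(\,\Moewe[12][8][5.9]\varphi\,)=H(\tau\circ\Moewe[12][8][5.9]\varphi)$ by \Cref{S27}. Here $\Omega=\bb R^k$. Since $\Moewe[12][8][5.9]\varphi(\{e_j\})=\{\varphi(e_j)\}$ is a singleton, we get
\[
	h_{\tau\circ\Moewe[12][8][5.9]\varphi}(\omega)=\big(\sigma(\{\varphi(e_j)\},\omega)\big)_{j=1}^n=\big((\varphi(e_j),\omega)\big)_{j=1}^n=B\omega,
\]
where $B\DF\bb R^k\to\bb R^n$ is the linear map whose $j$-th row is $\varphi(e_j)^T$. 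Thus $h$ is linear and its image is $\ran B$.

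Next we form $H$. Since $\ran B$ is a linear subspace, it is invariant under nonnegative scaling. Hence
\[
	\bigcup_{\omega}\Cone\{h(\omega)\}+\Span\{\E[n]\}=\ran B+\Span\{\E[n]\}.
\]
This is a finite-dimensional linear subspace, hence closed, so ${\sf H}(\,\Moewe[12][8][5.9]\varphi\,)=\ran B+\Span\{\E[n]\}$ is a linear subspace. Moreover $\E[n]\in\ran B$: each $\varphi(e_j)\in\mc Dk$ has coordinate sum $1$, so $B\E[k]=\big((\varphi(e_j),\E[k])\big)_j=\E[n]$. Therefore ${\sf H}(\,\Moewe[12][8][5.9]\varphi\,)=\ran B$.

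Finally, $\dim\ran B=\operatorname{rank}B$ equals the row rank of $B$. The rows of $B$ are the vectors $\varphi(e_j)$, so the row rank is $\dim\Span\{\varphi(e_j)\DS j\in\{1,\ldots,n\}\}$, which is \cref{S70}.

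The only point needing care is the membership $\E[n]\in\ran B$. Without it the dimension could be off by one, so this must be checked explicitly; it is exactly where the normalisation $\sum_a\alpha_a=1$ of $\mc Dk$ enters. Everything else is direct computation.
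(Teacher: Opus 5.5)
Your proof is correct and is essentially the paper's argument. Both compute $h$ via the support function representation $\rho_k$, identify it as the linear map whose matrix has rows $\varphi(e_j)^T$, use $h(\E[k])=\E[n]$ to see that adding $\Span\{\E[n]\}$ changes nothing, and read off the dimension as the rank of that matrix (your explicit remark that the resulting subspace is closed, so no closure is needed, is a detail the paper leaves implicit).
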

\begin{proof}
	Denote $a_j\DE\varphi(e_j)$ and $\Phi\DE\Moewe[12][8][5.9]\varphi$. Then $\Phi(\{e_j\})=\{a_j\}$. To compute 
	${\sf H}(\Phi)$ we use the support function representation $\rho_k\DF\mc Ck\to\bb R^{(\bb R^k)}$. Since 
	\[
		\sigma\big(\{a_j\},\omega\big)=(a_j,\omega),
	\]
	we obtain that $h_{\rho_k\circ\Phi}\DF\bb R^k\to\bb R^n$ acts as 
	\[
		h_{\rho_k\circ\Phi}(\omega)=\big((a_j,\omega)\big)_{j=1}^n.
	\]
	This is a linear map, and its matrix representation is 
	\[
		h_{\rho_k\circ\Phi}=\begin{pmatrix} a_1^T\\ \vdots\\ a_n^T\end{pmatrix}\in\bb R^{n\times k}.
	\]
	The rank of this matrix is $\dim(\Span\{a_1,\ldots,a_n\})$. Hence $h_{\rho_k\circ\Phi}(\bb R^k)$ is a linear subspace 
	of $\bb R^n$ whose dimension is equal to the dimension written on the right side of \cref{S70}. 
	Since $a_j\in\mc Dk$, we have $h_{\rho_k\circ\Phi}(\E[k])=\E[n]$. This shows that 
	$\E[n]\in h_{\rho_k\circ\Phi}(\bb R^k)$, and we conclude that ${\sf H}(\Phi)=h_{\rho_k\circ\Phi}(\bb R^k)$.
\end{proof}

\begin{proof}[Proof of \Cref{S40}]
	The inequality ``$\leq$'' is clear, since we can use $\Id_{\mc Cn}\DF\mc Cn\to\mc Cn$ as a witness. For the proof of the
	reverse inequality note that $\Id_{\mc Cn}=\Moewe[28][-8]{\Id_{\mc Dn}}$. Thus \Cref{S44} and \Cref{S43} yield 
	$\delta({\sf H}(\Id_{\mc Cn}))=\dim {\sf H}(\Id_{\mc Cn})=n$, and \Cref{S35} applied with the number $n$ and 
	$\Lambda\DE\Id_{\mc Cn}$ gives the inequality ``$\geq$''.
\end{proof}

\noindent
The result from this proposition can be lifted to a larger class of algebras.

\begin{corollary}
\label{S98}
	Let $n\in\bb N$, $n\geq 3$, and let $C\subseteq\mc Dn$ be a convex subset with nonempty relative interiour (i.e., 
	nonempty interiour with respect
	to the subspace topology of $\mc Dn$ inherited from $\bb R^n$). Then $\Edim\Moewe[13][8][8.33]C=n$.
\end{corollary}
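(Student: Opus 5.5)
The plan is to prove the two inequalities $\Edim\Moewe[13][8][8.33]C\leq n$ and $\Edim\Moewe[13][8][8.33]C\geq n$ separately. Both will come from injective homomorphisms relating $\Moewe[13][8][8.33]C$ and $\mc Cn$, combined with \Cref{S40}. I will not apply \Cref{S35} directly, because $\Moewe[13][8][8.33]C$ need not be finitely generated.

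\emph{Upper bound.} Since $C\subseteq\mc Dn$ is a convex subset, every set $\Conv M$ with $M\in\mc P_{\Fin}(C)\setminus\{\emptyset\}$ is also an element of $\Moewe[21][-2][8.33]{\mc Dn}=\mc Cn$. The operations $+_p$ and $\oplus$ of \Cref{S64} are given by the same formulas in both algebras. Hence $\Moewe[13][8][8.33]C$ is a subalgebra of $\mc Cn$, and the inclusion map is an injective homomorphism into $\mc Cn$. This gives $\Edim\Moewe[13][8][8.33]C\leq n$.

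\emph{Lower bound.} The idea is to place a small copy of $\mc Cn$ inside $\Moewe[13][8][8.33]C$.
\begin{Ilist}
\item Pick a point $c$ in the relative interior of $C$ and some $\varepsilon\in(0,1]$. Set $q_j\DE(1-\varepsilon)c+\varepsilon e_j$ for $j\in\{1,\ldots,n\}$.
\item Every point of $\Conv\{q_1,\ldots,q_n\}$ has the form $(1-\varepsilon)c+\varepsilon p$ with $p\in\mc Dn$, so it lies within distance $\varepsilon\operatorname{diam}\mc Dn$ of $c$ inside the affine hull of $\mc Dn$. Hence for small $\varepsilon$ all of these points lie in $C$.
\item Let $\varphi\in\HomCA(\mc Dn,\mc Dn)$ be the homomorphism determined by $\varphi(e_j)=q_j$, that is, $\varphi(p)=(1-\varepsilon)c+\varepsilon p$. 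This map is an injective affine map whose image lies in $C$.
\item Consequently $\Moewe[12][8][5.9]\varphi$ maps $\mc Cn$ into $\Moewe[13][8][8.33]C$. It is injective: if $\varphi(A)=\varphi(B)$, then applying $\varphi^{-1}$ on its image gives $A=B$.
\end{Ilist}
Now suppose $\tau\in\HomCSL(\Moewe[13][8][8.33]C,\mc Ck)$ is injective. Then $\tau\circ\Moewe[12][8][5.9]\varphi$ is an injective homomorphism of $\mc Cn$ into $\mc Ck$, so $k\geq\Edim\mc Cn=n$ by \Cref{S40}. Taking the infimum over all such $k$ gives $\Edim\Moewe[13][8][8.33]C\geq n$.

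There is no real obstacle in this argument. The only points needing care are that the small simplex $\Conv\{q_1,\ldots,q_n\}$ fits inside $C$, which uses the relative interior hypothesis, and that $\Moewe[12][8][5.9]\varphi$ is injective, which uses injectivity of $\varphi$. The hypothesis $n\geq 3$ does not seem to be needed by this argument.
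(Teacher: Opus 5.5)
Your proof is correct and is essentially the paper's argument. The upper bound comes from $\Moewe[13][8][8.33]C$ being a subalgebra of $\mc Cn$. The lower bound comes from mapping $\mc Cn$ injectively into $\Moewe[13][8][8.33]C$ via a homothety and invoking $\Edim\mc Cn=n$ from \Cref{S40}; your map $p\mapsto(1-\varepsilon)c+\varepsilon p$ is exactly the paper's $T_a\circ M_\lambda$ with $\lambda=\varepsilon$, $a=(1-\varepsilon)c$. Your remark that the hypothesis $n\geq 3$ is not needed is also accurate, since the paper's proof does not use it either.
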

\begin{proof}
	We have $\Moewe[13][8][8.33]C\subseteq\Moewe[21][-2][8.33]{\mc Dn}=\mc Cn$, and hence 
	$\Edim\Moewe[13][8][8.33]C\leq\Edim\mc Cn=n$.
	
	Since $C$ has nonempty relative interiour, we find a translation $T_a$ and a rescaling $M_\lambda$ such that 
	$(T_a\circ M_\lambda)(\mc Dn)\subseteq C$. The map $\Moewe[17][4][8.33]{T_a}\circ\Moewe[21][-2][8.33]{M_\lambda}$ is an
	automorphism of $\Moewe[21][-2][8.33]{\bb R^n}$ and 
	$(\Moewe[17][4][8.33]{T_a}\circ\Moewe[21][-2][8.33]{M_\lambda})(\mc Cn)\subseteq\Moewe[13][8][8.33]C$. It follows that 
	\[
		n=\Edim\mc Cn=\Edim(\Moewe[17][4][8.33]{T_a}\circ\Moewe[21][-2][8.33]{M_\lambda})(\mc Cn)\leq
		\Edim\Moewe[13][8][8.33]C.
	\]
\end{proof}

\noindent
As a second application of \Cref{S35} we compute the embedding dimension of a product $\mc Cn\times\mc Cm$.

\begin{proposition}
\label{S48}
	Let $n,m\in\bb N\setminus\{0\}$. Then
	\begin{equation}
	\label{S50}
		\Edim\big(\mc Cn\times\mc Cm\big)=
		\begin{cases}
			3 \CAS n=m=2,
			\\[1mm]
			\max\{n,m\} \CASO.
		\end{cases}
	\end{equation}
\end{proposition}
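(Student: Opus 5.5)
The plan is to apply \Cref{S35} to a concrete surjection $\Lambda\DF\mc C(n+m)\to\mc Cn\times\mc Cm$ and compute ${\sf H}(\Lambda)$.

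\emph{The surjection.} Fix $a_0\in\mc Dn$ and $b_0\in\mc Dm$. Define $\Lambda$ on the generators by
\[
	\{e_j\}\mapsto(\{e_j\},\{b_0\})\quad\text{for }j\le n,\qquad
	\{e_{n+i}\}\mapsto(\{a_0\},\{e_i\})\quad\text{for }i\le m.
\]
Surjectivity should follow from \Cref{S96}: the image is a subalgebra, and we have to check that it contains a generating set of the product. A cleaner choice may be better. Take $\Lambda=\Moewe[12][8][5.9]\varphi$-like maps into each factor and use that the product is generated by the pairs $(\{e_j\},\{e_i\})$. With $nm$ generators this gives the natural surjection from $\mc C(nm)$, and then ${\sf H}$ is easy to compute via \Cref{S44}-style reasoning.

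\emph{Computing ${\sf H}(\Lambda)$.} Embed $\mc Cn\times\mc Cm$ into $\bb R^{\bb R^n}\times\bb R^{\bb R^m}=\bb R^{\bb R^n\sqcup\bb R^m}$ via the support functions $\rho_n\times\rho_m$. Then
\[
	h(\omega)=\big((e_j,\omega)\big)_{j,i}=(\omega_j)_{j,i}\quad\text{for }\omega\in\bb R^n,
\]
and similarly $(\omega'_i)_{j,i}$ for $\omega'\in\bb R^m$. Hence ${\sf H}(\Lambda)$ is the union of two linear subspaces $V_1,V_2\subseteq\bb R^{nm}$, of dimensions $n$ and $m$. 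Both contain $\E[nm]$, and they are closed, so no closure issue arises. This gives
\[
	\delta({\sf H}(\Lambda))=\max\{n,m\}
\]
by \Cref{S43} and \Cref{S42}(iii).

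\emph{Case analysis via \Cref{S35}.}
\begin{Ilist}
\item If $\max\{n,m\}\neq2$, then $\Edim=\max\{n,m\}$.
\item If $\max\{n,m\}=2$ and one of $n,m$ equals $1$, then $V_1$ is contained in $V_2$ or vice versa, so ${\sf H}(\Lambda)$ is a single plane $\Cone\{y\}\cup\Cone\{-y\}+\Span\{\E\}$. Hence $\Edim=2$.
\item If $n=m=2$, then ${\sf H}(\Lambda)=V_1\cup V_2$ with two distinct planes through $\Span\{\E[4]\}$. This set is the union of four half-planes $\Cone\{\pm y_1\},\Cone\{\pm y_2\}$ modulo $\E[4]$, where $y_1,y_2$ are independent modulo $\E[4]$.
\end{Ilist}

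\emph{The main obstacle.} In the case $n=m=2$ we must show that $V_1\cup V_2$ is not of the form $(\Cone\{z_1\}\cup\Cone\{z_2\})+\Span\{\E[4]\}$. I plan to pass to the quotient by $\Span\{\E[4]\}$, which keeps the lines through $\E[4]$ as rays. There, $V_1\cup V_2$ becomes the union of two distinct lines, that is, four rays. A set of the form $\Cone\{z_1\}\cup\Cone\{z_2\}$ becomes at most two rays. Since $P(\Cone\{z\})$ is a single ray, four rays cannot be covered by two, and so $\Edim=3$.

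It remains to confirm surjectivity of $\Lambda\DF\mc C(nm)\to\mc Cn\times\mc Cm$ with $\{e_{(j,i)}\}\mapsto(\{e_j\},\{e_i\})$. By \Cref{S96}, an element $(\Conv P,\Conv Q)$ equals $\bigoplus_{p\in P,q\in Q}(\{p\},\{q\})$. Each $(\{p\},\{q\})$ is a convex combination of the generators, namely
\[
	(\{p\},\{q\})=\sum_{j,i}p_jq_i\,(\{e_j\},\{e_i\}),
\]
using the product measure. Taking the $\oplus$ over $P\times Q$ gives $(\Conv P,\Conv Q)$ by distributivity, which proves surjectivity.
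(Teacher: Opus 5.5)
Your proof is correct and follows the paper's argument. The main steps coincide: the surjection $\Lambda\colon\mc C(nm)\to\mc Cn\times\mc Cm$ with $\{e_{j,i}\}\mapsto(\{e_j\},\{e_i\})$, the support-function computation showing that ${\sf H}(\Lambda)$ is a union of two subspaces of dimensions $n$ and $m$, and the appeal to \Cref{S35}. Your ray count modulo $\Span\{\E[4]\}$ for $n=m=2$ is a cleaner phrasing of the paper's step of normalising the $y_l$ to coordinate sum zero and counting sign patterns, and your case $\min\{n,m\}=1$ via $V_1\subseteq V_2$ replaces the paper's use of $\mc Cn\times\mc C1\cong\mc Cn$.

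Discard the opening $\mc C(n+m)$ candidate entirely, though. For $n,m\geq 2$ it is not surjective: a pair $(\{e_j\},\{e_i\})$ with $a_0\neq e_j$ and $b_0\neq e_i$ is not in its image.
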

\begin{proof}
	Without loss of generality we assume that $m\leq n$. If $m=1$ then $\mc Cn\times\mc Cm\cong\mc Cn$ and hence \Cref{S40} applies. 
	For the rest of the proof we assume that $m\geq 2$. 

	For notational convenience we denote vectors of dimension $nm$ not as one column of height $nm$, but as
	rectangular matrix with $n$ rows and $m$ columns. Then $\mc D(nm)$ becomes the set of $n\times m$-matrices
	\[
		\mc D(nm)=\Big\{(\alpha_{i,j})_{\substack{i=1,\ldots,n\\ j=1,\ldots,m}}\,\Big|\,
		\alpha_{i,j}\geq 0,\sum_{i=1}^n\sum_{j=1}^m\alpha_{i,j}=1\Big\}.
	\]
	Correspondingly, we denote 
	\[
		e_{l,k}\DE(\alpha_{i,j})_{\substack{i=1,\ldots,n\\ j=1,\ldots,m}}
		\quad\text{with}\quad
		\alpha_{i,j}\DE
		\begin{cases}
			1 \CAS (i,j)=(l,k),
			\\
			0 \CASO.
		\end{cases}
	\]
	Using this notation, $\mc C(nm)$ is free with basis
	\[
		\big\{\{e_{i,j}\}\DSb i\in\{1,\ldots,n\},j\in\{1,\ldots,m\}\big\}.
	\]
	Now let $\Lambda\DF\mc C(nm)\to\mc Cn\times\mc Cm$ be the homomorphism that satisfies 
	\[
		\forall i\in\{1,\ldots,n\},j\in\{1,\ldots,m\}\DP \Lambda(\{e_{i,j}\})=\big(\{e_i\},\{e_j\}\big)
	\]
	The set $\{(\{e_i\},\{e_j\})\DS i\in\{1,\ldots,n\},j\in\{1,\ldots,m\}\}$ generates $\mc Cn\times\mc Cm$, and hence 
	$\Lambda$ is surjective.

	Let $\Xi\DF\bb R^{(\bb R^n)}\times\bb R^{(\bb R^m)}\to\bb R^{\bb R^n\dot\cup\,\bb R^m}$ be defined as
	\[
		\big[\Xi\big((f,g)\big)\big](\omega)\DE
		\begin{cases}
			f(\omega) \CAS \omega\in\bb R^n,
			\\
			g(\omega) \CAS \omega\in\bb R^m.
		\end{cases}
	\]
	Then $\Xi$ is an isomorphism of convex semilattices. 
	Now define 
	\[
		\tau\DE\Xi\circ(\rho_n\times\rho_m)\DF\mc Cn\times\mc Cm\to\bb R^{\bb R^n\dot\cup\,\bb R^m}.
	\]
	Then $\tau$ is an injective homomorphism. 
	\[
		\begin{tikzcd}[column sep=large]
			\mc C(nm) \arrow[r,two heads,"\Lambda"]
			& \mc Cn\times\mc Cm \arrow[r,hookrightarrow,swap,"\rho_n\times\rho_m"]
				\arrow[rr,hookrightarrow,bend left=20,dashed,"\tau"]
			& \bb R^{(\bb R^n)}\times\bb R^{(\bb R^m)} \arrow{r}{\cong}[swap]{\Xi}
			& \bb R^{\bb R^n\dot\cup\,\bb R^m}
		\end{tikzcd}
	\]
	The rest of the argument is a variation of the proof of \Cref{S44}.
	Denote by $\zeta_r\DF\bb R^n\to\bb R^{n\times m}$ and $\zeta_c\DF\bb R^m\to\bb R^{n\times m}$ the linear maps with 
	\[
		\zeta_r(e_i)\DE\sum_{j=1}^m e_{i,j}=
		\begin{pmatrix} 0\\ {\scriptstyle 1\ \cdots\cdots\cdots\ 1}\\ 0\end{pmatrix}
		,\quad
		\zeta_c(e_j)\DE\sum_{i=1}^n e_{i,j}=
		\begin{pmatrix} 0\mkern-7mu & 
			\begin{array}{c}{\scriptstyle 1}\\[-3pt] \vdots\\[-3pt] {\scriptstyle 1}\end{array} & 
			\mkern-7mu 0
		\end{pmatrix}
		,
	\]
	where the $1$'s in the first matrix appear in the $i$-th row, and those in the second matrix in the $j$-th column.
	Clearly, $\zeta_r$ and $\zeta_c$ are injective. 

	We have, for all $i\in\{1,\ldots,n\}$ and $j\in\{1,\ldots,m\}$,
	\[
		\big((\rho_n\times\rho_m)\circ\Lambda\big)(\{e_{i,j}\})]=\big((e_i,\Dummy),(e_j,\Dummy)\big),
	\]
	and hence
	\[
		\big[(\tau\circ\Lambda)(\{e_{i,j}\})\big](\omega)=
		\begin{cases}
			(e_i,\omega) \CAS \omega\in\bb R^n,
			\\
			(e_j,\omega) \CAS \omega\in\bb R^m.
		\end{cases}
	\]
	Therefore
	\[
		h_{\tau\circ\Lambda}(\omega)=
		\begin{cases}
			\big((e_i,\omega)\big)_{\substack{i=1,\ldots,n\\ j=1,\ldots,m}}\CAS \omega\in\bb R^n
			\\[4mm]
			\big((e_j,\omega)\big)_{\substack{i=1,\ldots,n\\ j=1,\ldots,m}}\CAS \omega\in\bb R^m
		\end{cases}
		=
		\begin{cases}
			\zeta_r\big((e_i,\omega)_{i=1}^n\big) \CAS \omega\in\bb R^n
			\\[4mm]
			\zeta_c\big((e_j,\omega)_{j=1}^m\big) \CAS \omega\in\bb R^m
		\end{cases}
	\]
	We see that $h_{\tau\circ\Lambda}|_{\bb R^n}$ and $h_{\tau\circ\Lambda}|_{\bb R^m}$ are linear. 
	Moreover, $\zeta_r(\E[n])=\E[nm]=\zeta_c(\E[m])$. 
	For $\lambda\geq 0$, $\omega\in\bb R^n$, and $\alpha\in\bb R$ we have 
	\[
		\lambda h_{\tau\circ\Lambda}(\omega)+\alpha \E[nm]
		=\lambda\zeta_r\big((\omega,e_i)_{i=1}^n\big)+\frac\alpha m\zeta_r(\E[n])
		=\zeta_r\Big(\lambda(\omega,e_i)_{i=1}^n+\alpha\E[n]\Big),
	\]
	and it follows that
	\[
		\big\{\lambda h_{\tau\circ\Lambda}(\omega)+\alpha\E[nm]\DSb
		\lambda\geq 0,\omega\in\bb R^n,\alpha\in\bb R\big\}=\zeta_r(\bb R^n).
	\]
	Analogously,
	\[
		\big\{\lambda h_{\tau\circ\Lambda}(\omega)+\alpha \E[nm]\DSb
		\lambda\geq 0,\omega\in\bb R^m,\alpha\in\bb R\big\}=\zeta_c(\bb R^m),
	\]
	and therefore
	\[
		{\sf H}(\Lambda)=\zeta_r(\bb R^n)\cup\zeta_c(\bb R^m).
	\]
	\Cref{S42}(iii) and \Cref{S43} yield $\delta({\sf H}(\Lambda))=n$.

	If $n\geq 3$, \Cref{S35} gives $\Edim(\mc Cn\times\mc Cm)=\delta({\sf H}(\Lambda))=n$. If $n=2$ we have to
	further investigate the set ${\sf H}(\Lambda)$.
	The crucial observation is the following: if we have $\beta\geq 0,\alpha\in\bb R$, and 
	$y=(\eta_{ij})_{i,j=1}^2\in\bb R^{2\times 2}$ with $\sum_{i,j=1}^2\eta_{ij}=0$, then 
	\begin{align*}
		\zeta_r(e_1)=\beta y+\alpha\E[4]& \ \Rightarrow\ 
		\alpha=\frac 12 &\mkern-30mu \wedge\ \eta_{11}>0,\eta_{12}>0,\eta_{21}<0,\eta_{22}<0
		\\
		-\zeta_r(e_1)=\beta y+\alpha\E[4]& \ \Rightarrow\ 
		\alpha=-\frac 12 &\mkern-30mu \wedge\ \eta_{11}<0,\eta_{12}<0,\eta_{21}>0,\eta_{22}>0
		\\
		\zeta_c(e_1)=\beta y+\alpha\E[4]& \ \Rightarrow\ 
		\alpha=\frac 12 &\mkern-30mu \wedge\ \eta_{11}>0,\eta_{12}<0,\eta_{21}>0,\eta_{22}<0
		\\
		-\zeta_c(e_1)=\beta y+\alpha\E[4]& \ \Rightarrow\ 
		\alpha=-\frac 12 &\mkern-30mu \wedge\ \eta_{11}<0,\eta_{12}>0,\eta_{21}<0,\eta_{22}>0
	\end{align*}
	Assume we have a representation 
	\[
		{\sf H}(\Lambda)=\bigcup_{l=1}^m\Cone\{y_l\}+\Span\{\E[4]\}.
	\]
	Adding scalar multiples of $\E[4]$ to $y_l$ does not change the set on the right side. 
	Hence, we may assume that for each $y_l$ the sum of its four entries 
	vanishes. Now the above implications show that necessarily $m\geq 4$. \Cref{S35} implies $\Edim(\mc Cn\times\mc Cm)=3$.
\end{proof}

\noindent
It is seen by induction (or by writing out a ``multiindex-version'' of the above proof) that \cref{S50} lifts to finite products.

\begin{corollary}
\label{S45}
	Let $m\geq 2$ and $n_1,\ldots,n_m\in\bb N\setminus\{0\}$. Then 
	\[
		\Edim\Big(\prod_{i=1}^m\mc Cn_i\Big)=
		\begin{cases}
			3 \CAS \parbox[t]{55mm}{$\max\limits_{i=1,\ldots,m}n_i=2$ and there exist at least 
			two indices $i$ with $n_i=2$,}
			\\[8mm]
			\max\limits_{i=1,\ldots,m}n_i \CASO.
		\end{cases}
	\]
\end{corollary}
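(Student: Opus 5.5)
We run the multiindex version of the proof of \Cref{S48}, with \Cref{S35} deciding the value at the end. Set $N\DE\prod_{i=1}^m n_i$ and index the basis of $\mc CN$ by multiindices $\iota=(\iota_1,\ldots,\iota_m)\in\prod_{i=1}^m\{1,\ldots,n_i\}$. Let $\Lambda\DF\mc CN\to\prod_i\mc Cn_i$ be the homomorphism with $\Lambda(\{e_\iota\})\DE(\{e_{\iota_1}\},\ldots,\{e_{\iota_m}\})$; it is surjective because these tuples generate the product. As injective representation we use $\tau\DE\Xi\circ(\rho_{n_1}\times\cdots\times\rho_{n_m})$ into $\bb R^{\dot\bigcup_i\bb R^{n_i}}$.

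For $\omega\in\bb R^{n_i}$ we get $h_{\tau\circ\Lambda}(\omega)=\zeta_i\big((e_l,\omega)_{l=1}^{n_i}\big)$, where $\zeta_i\DF\bb R^{n_i}\to\bb R^N$ is the injective linear map with $\zeta_i(e_l)\DE\sum_{\iota_i=l}e_\iota$ and $\zeta_i(\E[n_i])=\E[N]$. As in \Cref{S48} this gives ${\sf H}(\Lambda)=\bigcup_{i=1}^m\zeta_i(\bb R^{n_i})$. Then \Cref{S42}(iii) and \Cref{S43} yield $\delta({\sf H}(\Lambda))=\max_i n_i$.

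We now split into cases according to \Cref{S35}.
\begin{Ilist}
\item If $\max_i n_i\geq3$, \Cref{S35}(i) gives $\Edim=\max_i n_i$.
\item If $\max_i n_i=1$, every factor is trivial, so ${\sf H}(\Lambda)=\Span\{\E[N]\}$ and $\Edim=1$.
\item If $\max_i n_i=2$ and exactly one index has $n_i=2$, the product is isomorphic to $\mc C2$ (the factors $\mc C1$ are trivial), and \Cref{S40} gives $\Edim=2$.
\item If at least two indices $i_1\neq i_2$ have $n_{i_1}=n_{i_2}=2$, we have $\delta=2$. By \Cref{S35}(ii) it suffices to show that ${\sf H}(\Lambda)$ is not of the form $(\Cone\{y_1\}\cup\Cone\{y_2\})+\Span\{\E[N]\}$.
\end{Ilist}

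The last case is the main obstacle, and we handle it with a sign-pattern argument. Suppose ${\sf H}(\Lambda)=\bigcup_l\Cone\{y_l\}+\Span\{\E[N]\}$, and normalize each $y_l$ to have entry sum zero. The four vectors $\pm\zeta_{i_1}(e_1)$ and $\pm\zeta_{i_2}(e_1)$ lie in ${\sf H}(\Lambda)$, so each equals $\beta y_l+\alpha\E[N]$ for some $l$, some $\beta\geq 0$ and some $\alpha\in\bb R$. Comparing entry sums forces $\alpha=\pm\tfrac12$, and then $\beta y_l=\pm\zeta_i(e_1)\mp\tfrac12\E[N]$, so $\beta>0$.

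The entries of this $y_l$ are positive exactly on $\{\iota_i=1\}$ and negative on $\{\iota_i=2\}$ (or the reverse), because $\zeta_i(e_1)$ is the indicator of $\{\iota_i=1\}$. These four sign patterns, determined by the coordinates $\iota_{i_1}$ and $\iota_{i_2}$, are pairwise distinct, so they require at least four distinct $y_l$. Hence no representation with two cones exists, and $\Edim=3$.

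A direct argument along these lines is preferable to induction on $m$. Induction would need \Cref{S35} applied to products whose embedding is not a product of free algebras, whereas here all the bookkeeping sits in the formula for ${\sf H}(\Lambda)$.
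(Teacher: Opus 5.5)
Your proof is correct, but it takes a different route from the paper. The paper mentions the multiindex version only in passing and actually argues by induction on $m$ with a sandwich argument. After ordering $n_1\geq\cdots\geq n_m$, it sets $d\DE\Edim(\mc Cn_1\times\mc Cn_2)$ and $d'\DE\Edim(\prod_{i=3}^m\mc Cn_i)$. These values are known from \cref{S50}, and from \Cref{S40} or the inductive hypothesis. It then checks $d\geq d'$ and $(d,d')\neq(2,2)$. Finally it uses the injective homomorphisms $\mc Cn_1\times\mc Cn_2\hookrightarrow\prod_{i=1}^m\mc Cn_i\hookrightarrow\mc Cd\times\mc Cd'$ to get $d\leq\Edim\prod_{i=1}^m\mc Cn_i\leq\Edim(\mc Cd\times\mc Cd')=d$, with the last equality again from \cref{S50}.

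So your closing remark misjudges the induction. It never needs \Cref{S35} for anything other than products of two free algebras (and only through \cref{S50}), because both bounds reduce to \cref{S50} via monotonicity of $\Edim$ under injective homomorphisms.

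What the paper's route buys is brevity: \cref{S50} is used as a black box and ${\sf H}$ is never recomputed. What yours buys is a self-contained formula ${\sf H}(\Lambda)=\bigcup_i\zeta_i(\bb R^{n_i})$ for the whole product. From it, $\delta$ is read off at once and the sign-pattern argument runs uniformly. It even shows that with $r$ factors equal to $\mc C2$, any representation of ${\sf H}(\Lambda)$ as a union of sets $\Cone\{y_l\}+\Span\{\E[N]\}$ needs at least $2r$ of them. The price is treating the case of exactly one $n_i=2$ separately, which you do correctly via $\prod_i\mc Cn_i\cong\mc C2$.
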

\begin{proof}
	W.l.o.g.\ assume that $n_1\geq n_2\geq\cdots\geq n_m$. We use induction on $m$. 
	The base case ``$m=2$'' is \cref{S50}. Assume that $m\geq 3$. By \Cref{S40} and the inductive hypothesis we have 
	\begin{align*}
		d\DE &\, \Edim\big(\mc Cn_1\times\mc Cn_2\big)=
		\begin{cases}
			3 \CAS n_1=n_2=2,
			\\
			n_1 \CASO,
		\end{cases}
		\\
		d'\DE &\, \Edim\Big(\prod_{i=3}^m\mc Cn_i\Big)=
		\begin{cases}
			3 \CAS m\geq 4\wedge n_3=n_4=2,
			\\
			n_3 \CASO.
		\end{cases}
	\end{align*}
	We observe two facts. The first is that $d\geq d'$: if $m\geq 4,n_3=n_4=2$ then $n_2\geq 2$ and hence $d\geq 3=d'$, and otherwise 
	$d\geq n_1\geq n_3=d'$. The second is that $(d,d')\neq (2,2)$: if $d'=2$ then $n_3=2$ and hence $n_2\geq 2$ which implies $d\geq 3$.

	We have injective homomorphisms 
	\[
		\mc Cn_1\times\mc Cn_2\to\prod_{i=1}^m\mc Cn_i\cong\big(\mc Cn_1\times\mc Cn_2\big)\times\prod_{i=3}^m\mc Cn_i
		\to\mc Cd\times\mc Cd'.
	\]
	and \cref{S50} yields
	\[
		d=\Edim\big(\mc Cn_1\times\mc Cn_2\big)\leq\Edim\Big(\prod_{i=1}^m\mc Cn_i\Big)\leq
		\Edim\big(\mc Cd\times\mc Cd'\big)=d.
	\]
\end{proof}

\noindent
Let us give an explicit example of an embedding of $\mc C2\times\mc C2$ into $\mc C3$. By composing with $\kappa_3$ and an appropriate
translation and rescaling as usual it is enough to construct an embedding of $\mc C2\times\mc C2$ into $\Moewe[21][-2][8.33]{\bb R^2}$. 

\begin{Example}
\label{S92}
	We define $\Phi\in\HomCSL(\mc C4,\Moewe[21][-2][8.33]{\bb R^2})$ by specifying the images of generators. Here we use the same
	notation and conventions as in the proof of \Cref{S48}. Let $A_{ij}$, $i,j\in\{1,2\}$, be defined as 
	the convex hull of six corner points of a regular octagon according to the following pictures
	\begin{center}
	\begin{tikzpicture}[x=1pt,y=1pt,scale=1,font=\fontsize{12}{12}]
		\draw[thin,dotted,->] (0,-50)--(0,60);
		\draw[thin,dotted,->] (-60,0)--(60,0);
		\draw[pattern=dots,pattern color=green,thin] 
			({40*cos(0)},{40*sin(0)})--({40*cos(90)},{40*sin(90)})--({40*cos(135)},{40*sin(135)})--
			({40*cos(180)},{40*sin(180)})--({40*cos(225)},{40*sin(225)})--({40*cos(270)},{40*sin(270)})--
			({40*cos(0)},{40*sin(0)});
		\draw[fill,red] ({40*cos(0)},{40*sin(0)}) circle [radius=1];
		\draw[fill,red] ({40*cos(45)},{40*sin(45)}) circle [radius=1];
		\draw[fill,red] ({40*cos(90)},{40*sin(90)}) circle [radius=1];
		\draw[fill,red] ({40*cos(135)},{40*sin(135)}) circle [radius=1];
		\draw[fill,red] ({40*cos(180)},{40*sin(180)}) circle [radius=1];
		\draw[fill,red] ({40*cos(225)},{40*sin(225)}) circle [radius=1];
		\draw[fill,red] ({40*cos(270)},{40*sin(270)}) circle [radius=1];
		\draw[fill,red] ({40*cos(315)},{40*sin(315)}) circle [radius=1];
		\draw (0,-70) node {$A_{11}$};
		\draw[thin,dotted,->] (180,-50)--(180,60);
		\draw[thin,dotted,->] (120,0)--(240,0);
		\draw[pattern=dots,pattern color=green,thin] 
			({180+40*cos(0)},{40*sin(0)})--({180+40*cos(90)},{40*sin(90)})--({180+40*cos(135)},{40*sin(135)})--
			({180+40*cos(180)},{40*sin(180)})--({180+40*cos(270)},{40*sin(270)})--({180+40*cos(315)},{40*sin(315)})--
			({180+40*cos(0)},{40*sin(0)});
		\draw[fill,red] ({180+40*cos(0)},{40*sin(0)}) circle [radius=1];
		\draw[fill,red] ({180+40*cos(45)},{40*sin(45)}) circle [radius=1];
		\draw[fill,red] ({180+40*cos(90)},{40*sin(90)}) circle [radius=1];
		\draw[fill,red] ({180+40*cos(135)},{40*sin(135)}) circle [radius=1];
		\draw[fill,red] ({180+40*cos(180)},{40*sin(180)}) circle [radius=1];
		\draw[fill,red] ({180+40*cos(225)},{40*sin(225)}) circle [radius=1];
		\draw[fill,red] ({180+40*cos(270)},{40*sin(270)}) circle [radius=1];
		\draw[fill,red] ({180+40*cos(315)},{40*sin(315)}) circle [radius=1];
		\draw (180,-70) node {$A_{12}$};
	\end{tikzpicture}
	\end{center}
	\begin{center}
	\begin{tikzpicture}[x=1pt,y=1pt,scale=1,font=\fontsize{12}{12}]
		\draw[thin,dotted,->] (0,-50)--(0,60);
		\draw[thin,dotted,->] (-60,0)--(60,0);
		\draw[pattern=dots,pattern color=green,thin] 
			({40*cos(0)},{40*sin(0)})--({40*cos(45)},{40*sin(45)})--({40*cos(90)},{40*sin(90)})--
			({40*cos(180)},{40*sin(180)})--({40*cos(225)},{40*sin(225)})--({40*cos(270)},{40*sin(270)})--
			({40*cos(0)},{40*sin(0)});
		\draw[fill,red] ({40*cos(0)},{40*sin(0)}) circle [radius=1];
		\draw[fill,red] ({40*cos(45)},{40*sin(45)}) circle [radius=1];
		\draw[fill,red] ({40*cos(90)},{40*sin(90)}) circle [radius=1];
		\draw[fill,red] ({40*cos(135)},{40*sin(135)}) circle [radius=1];
		\draw[fill,red] ({40*cos(180)},{40*sin(180)}) circle [radius=1];
		\draw[fill,red] ({40*cos(225)},{40*sin(225)}) circle [radius=1];
		\draw[fill,red] ({40*cos(270)},{40*sin(270)}) circle [radius=1];
		\draw[fill,red] ({40*cos(315)},{40*sin(315)}) circle [radius=1];
		\draw (0,-70) node {$A_{21}$};
		\draw[thin,dotted,->] (180,-50)--(180,60);
		\draw[thin,dotted,->] (120,0)--(240,0);
		\draw[pattern=dots,pattern color=green,thin] 
			({180+40*cos(0)},{40*sin(0)})--({180+40*cos(45)},{40*sin(45)})--({180+40*cos(90)},{40*sin(90)})--
			({180+40*cos(180)},{40*sin(180)})--({180+40*cos(270)},{40*sin(270)})--({180+40*cos(315)},{40*sin(315)})--
			({180+40*cos(0)},{40*sin(0)});
		\draw[fill,red] ({180+40*cos(0)},{40*sin(0)}) circle [radius=1];
		\draw[fill,red] ({180+40*cos(45)},{40*sin(45)}) circle [radius=1];
		\draw[fill,red] ({180+40*cos(90)},{40*sin(90)}) circle [radius=1];
		\draw[fill,red] ({180+40*cos(135)},{40*sin(135)}) circle [radius=1];
		\draw[fill,red] ({180+40*cos(180)},{40*sin(180)}) circle [radius=1];
		\draw[fill,red] ({180+40*cos(225)},{40*sin(225)}) circle [radius=1];
		\draw[fill,red] ({180+40*cos(270)},{40*sin(270)}) circle [radius=1];
		\draw[fill,red] ({180+40*cos(315)},{40*sin(315)}) circle [radius=1];
		\draw (180,-70) node {$A_{22}$};
	\end{tikzpicture}
	\end{center}
	and let $\Phi$ be the homomorphism with $\Phi(\{e_{i,j}\})=A_{ij}$, $i,j\in\{1,2\}$. 

	In order to compute ${\sf H}(\Phi)$ we use the support function representation 
	$\tau\DF\Moewe[21][-2][8.33]{\bb R^2}\to\bb R^{S^1}$. We have 
	\begin{align*}
		& \forall\theta\in[0,\pi]\DP \sigma(A_{11},e^{i\theta})=\sigma(A_{12},e^{i\theta}) \wedge
		\sigma(A_{21},e^{i\theta})=\sigma(A_{22},e^{i\theta})
		\\
		& \forall\theta\in[0,\pi]\DP \sigma(A_{11},e^{i\theta})=\sigma(A_{21},e^{i\theta}) \wedge
		\sigma(A_{12},e^{i\theta})=\sigma(A_{22},e^{i\theta})
	\end{align*}
	Hence, ${\sf H}(\Phi)\subseteq{\sf H}(\Lambda)$. We have 
	\begin{align*}
		& \sigma(A_{11},e^{i\frac\pi4})<\sigma(A_{21},e^{i\frac\pi4}),
		& \sigma(A_{11},e^{i\frac{3\pi}4})>\sigma(A_{21},e^{i\frac{3\pi}4}),
		\\
		& \sigma(A_{11},e^{i\frac{5\pi}4})>\sigma(A_{12},e^{i\frac{5\pi}4}),
		& \sigma(A_{11},e^{i\frac{7\pi}4})<\sigma(A_{12},e^{i\frac{7\pi}4}).
	\end{align*}
	Hence, ${\sf H}(\Phi)\supseteq{\sf H}(\Lambda)$. 

	\Cref{S31} implies that there exists an injective map $\Psi\in\HomCSL(\mc C2\times\mc C2,\Moewe[21][-2][8.33]{\bb R^2})$ with 
	$\Psi\circ\Lambda=\Phi$.
\end{Example}

\subsection{About the generator dimension}

The generator dimension and the embedding dimension are related as follows.

\begin{proposition}
\label{S51}
	We have 
	\begin{align*}
		\big\{(\Gdim\bb X,\Edim\bb X)\DSb\bb X\in\mc W\big\}= &\, \{(1,1)\}
		\\
		\cup &\, \big\{(n,k)\in(\bb N\cup\{\infty\})\times\bb N\DSb 2\leq k\leq n\big\}
		\\
		\cup &\, \big\{(n,\infty)\DSb n\in\bb N\cup\{\infty\},n\geq 3\big\}.
	\end{align*}
	\begin{center}
	\begin{tikzpicture}[x=1.5pt,y=1.5pt,scale=1,font=\fontsize{12}{12}]
		\draw[thick] (20,20) -- (120,20);
		\draw[thick] (20,20) -- (20,120);
		\draw (140,20) node {$\Gdim\bb X$};
		\draw (20,130) node {$\Edim\bb X$};
		\foreach \x in {1,2,3,4,5,6,10} {
			\draw (\x*10+20,18) -- (\x*10+20,22); 
			\draw (18,\x*10+20) -- (22,\x*10+20);
			\ifthenelse{\x < 10}{\draw (\x*10+20,10) node {$\x$};}{\draw (\x*10+20,10) node {$\infty$};}
			\ifthenelse{\x < 10}{\draw (10,\x*10+20) node {$\x$};}{\draw (10,\x*10+20) node {$\infty$};}
			\foreach \y in {1,2,3,4,5,6,10} {
				\draw (\x*10+20,\y*10+20) circle [radius=0.5];
			}
			\draw[dotted] (90,\x*10+20) -- (110,\x*10+20);
			\draw[dotted] (90,90) -- (110,110);
		}
		\foreach \x in {1,2} {
			\draw[dotted] (\x*10+20,90) -- (\x*10+20,110);
		}
		\foreach \x in {3,4,5,10} {
			\draw[dotted] (\x*10+20,90) -- (\x*10+20,105);
		}
		\draw[color=blue,fill] (30,30) circle [radius=1.5];
		\foreach \x in {2,3,4,5,6,10} {
			\foreach \y in {2,3,4,5,6,10} {
				\ifthenelse{\x=\y \OR \y<\x \OR \(\y=10 \AND \x>2\)}{
				\draw[color=blue,fill] (\x*10+20,\y*10+20) circle [radius=1.5];}{}
			}
		}
	\end{tikzpicture}
	\end{center}
\end{proposition}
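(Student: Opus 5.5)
The plan is to prove the two inclusions separately: first the constraints satisfied by every $\bb X\in\mc W$, then realising each admissible pair by an explicit algebra.

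\textbf{Constraints.} By \cref{S29}, $\Gdim\bb X=1$ if and only if $\Edim\bb X=1$; this gives the point $(1,1)$, and otherwise both dimensions are at least $2$. Now let $\Gdim\bb X=n$ be finite, $n\ge 2$, and choose a surjective $\Lambda\in\HomCSL(\mc Cn,\bb X)$.
\begin{Ilist}
\item If $n=2$, \Cref{S2}(iv) puts ${\sf H}(\Lambda)$ among four explicit sets. Each of them is of the form $(\Cone\{y_1\}\cup\Cone\{y_2\})+\Span\{\E[2]\}$, so \Cref{S35} gives $\Edim\bb X\le 2$.
\item If $n\ge 3$ and $\delta({\sf H}(\Lambda))<\infty$, then \Cref{S42}(ii) gives $\delta({\sf H}(\Lambda))\le n$. \Cref{S35} then gives $\Edim\bb X\le\max\{3,\delta({\sf H}(\Lambda))\}\le n$.
\item The only other possibility is $\Edim\bb X=\infty$, which occurs only with $n\ge 3$.
\end{Ilist}
If $\Gdim\bb X=\infty$, nothing beyond $\Edim\bb X\ge 2$ needs to be shown. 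This proves ``$\subseteq$''.

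\textbf{Realisation of $(n,k)$ with $3\le k\le n<\infty$.} Take a polytope $P\subseteq\mc Dk$ with nonempty relative interior and exactly $n$ vertices, and set $\bb X\DE\Moewe[13][8][8.33]P\subseteq\mc Ck$. \Cref{S98} gives $\Edim\bb X=k$. The singletons of the vertices generate $\bb X$, so $\Gdim\bb X\le n$. For the reverse inequality I would argue with \Cref{S96}: every element of the generated algebra has the form $x_1\oplus\dots\oplus x_N$ with $x_i\in\Conv M$. For a vertex $v$, the element $\{v\}$ can only arise if each $x_i=\{v\}$ is a proper convex combination of elements of $M$. Since $v$ is an extreme point, this forces $\{v\}\in M$. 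Hence any generating set has at least $n$ elements.

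\textbf{Realisation of the remaining pairs.}
\begin{Ilist}
\item $(\infty,k)$ with $k\ge 3$: use $\Moewe[13][8][8.33]C$ for a smooth strictly convex body $C\subseteq\mc Dk$. It has infinitely many extreme points, so the same argument gives $\Gdim=\infty$, while \Cref{S98} gives $\Edim=k$.
\item $(\infty,\infty)$: use $\mc CA$ with $A$ infinite. It contains every $\mc Cn$, so by \Cref{S40} it has $\Edim=\infty$.
\item $(n,\infty)$ with $3\le n<\infty$: pick $\Phi\in\HomCSL(\mc Cn,\bb R^{\bb N})$ such that ${\sf H}(\Phi)$ is a round cone around a direction, plus $\Span\{\E[n]\}$; this is possible by \Cref{S91}. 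Then $\delta({\sf H}(\Phi))=\infty$, and $\bb X\DE\Phi(\mc Cn)\in\mc W$ has $\Edim=\infty$ by \Cref{S35}. To force $\Gdim=n$ exactly, I would replace $\bb X$ by $\bb X\times\mc Cn$. This product still has $\Edim=\infty$, and I expect that it also has $\Gdim=n$. The reason is that a surjection from $\mc Cm$ with $m<n$ onto it, composed with the projection onto $\mc Cn$, would produce a surjection $\mc Cm\to\mc Cn$, contradicting the vertex argument above.
\end{Ilist}
The upper bound $\Gdim(\bb X\times\mc Cn)\le n$ still has to be checked. For it I would use the generators $(\Phi(\{e_j\}),\{e_j\})$ and verify surjectivity using that the projection onto $\mc Cn$ is an isomorphism on the subalgebra they generate. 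Equivalently, I would replace $\bb X$ by the graph of $\Phi$ inside $\bb X\times\mc Cn$. This graph is generated by $n$ elements, surjects onto $\mc Cn$, and contains a copy of $\bb X$ up to isomorphism.

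\textbf{Main obstacle: $k=2<n$.} Here the polytope trick fails, because $\mc D2$ is a segment. I would work inside $\tau(\mc C2)\subseteq\bb R^2$, the triangle of \Cref{S68}, and choose $n$ points $z_1,\dots,z_n$ in it. These points should lie on a strictly concave arc that is decreasing with respect to the coordinatewise order. Then no $z_i$ lies in the algebra generated by the other points: convex combinations stay strictly below the arc, and joins move strictly up and to the right. The same extremality argument as above then gives $\Gdim=n$, while $\Edim=2$ follows because the algebra is a subalgebra of $\mc C2$ and is not trivial. The point needing the most care is that $\oplus$ (coordinatewise maximum) cannot recreate an arc point. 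I would verify this through the supporting line of the arc at $z_i$, which has negative slope, in the spirit of the half-space subalgebras of \Cref{S63}.
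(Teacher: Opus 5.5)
Your ``$\subseteq$'' argument is correct. You go through \Cref{S35} and \Cref{S42}(ii) instead of \Cref{S28} and \Cref{S67}, which amounts to the same thing. Your realisations of $(n,k)$ with $3\le k\le n\le\infty$ are those of \Cref{S102}. Two of the remaining constructions fail, however.

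\textbf{Case $k=2<n$:} the orientation of your arc is wrong. Let $z_1,z_2,z_3$ lie in this order on a strictly concave decreasing arc, and write $z_2=(p,q)$. The chord $[z_1,z_3]$ runs strictly below $z_2$, so it contains points $(p,q')$ with $q'<q$ and $(p',q)$ with $p'<p$. Then $(p,q')\oplus(p',q)=z_2$. For example, with $z_1=(0,1)$, $z_3=(1,0)$ and $s=2^{-1/2}$ one has $(s,1-s)\oplus(1-s,s)=(s,s)=z_2$. So your algebra is generated by the two endpoints and has $\Gdim\le 2$. This is also why your proposed check cannot succeed. The half-plane below a supporting line of negative slope has a normal vector with two positive entries, and by \Cref{S63} these are exactly the half-planes that are \emph{not} subalgebras. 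You need the opposite orientation, as in \Cref{S100}. Put the points on the lower-left boundary (the third-quadrant arc of the unit circle) and take $X=\Conv(\{z_1,\ldots,z_n\}\cup\{0\})$, which lies to their upper right. Then $X\cap(z_i+Q)=\{z_i\}$ for the closed negative quadrant $Q$, and together with extremality of $z_i$ this forces $z_i$ into every generating set. The pairs $(2,2)$ and $(\infty,2)$ are not addressed at all; use $\mc C2$ and the quarter disk \cref{S37}.

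\textbf{Case $(n,\infty)$:} there are two problems.

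First, for $n=3$ a round cone does not give $\delta=\infty$. ${\sf H}(\Phi)$ is the sum of $\Span\{\E[3]\}$ and its intersection with the plane $\{\E[3]\}^\perp$. That intersection is a closed convex cone in a two-dimensional space, hence finitely generated. So $\delta({\sf H}(\Phi))\le 3$, and \Cref{S35} gives $\Edim\le 3$. You need a non-convex closed set of directions, such as the accumulating rays of \Cref{S105}.

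Second, your device for pinning $\Gdim=n$ fails.
\begin{itemize}
\item The graph $\{(\Phi(c),c)\mid c\in\mc Cn\}$ is the image of the injective homomorphism $c\mapsto(\Phi(c),c)$. Hence it is isomorphic to $\mc Cn$, has $\Edim=n$, and cannot contain a copy of $\bb X$.
\item In fact $\Gdim(\bb X\times\mc Cn)\ge n+1$. A generating set projects onto a generating set of $\mc Cn$, so by \Cref{S101}(i) it contains elements $(x_j,\{e_j\})$, $j=1,\ldots,n$. These alone generate only the graph of the homomorphism $\{e_j\}\mapsto x_j$, which is a proper subalgebra because $|X|\ge 2$.
\end{itemize}
The paper instead starts from the example of \Cref{S105}, which has $\Gdim=3$ by the ``$\subseteq$'' part. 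It then applies \Cref{S106} $n-3$ times. Adjoining a coordinate $\star$ and the generator $e_\star$ raises $\Gdim$ by exactly one while keeping an isomorphic copy of $\bb X$, so $\Edim$ stays $\infty$.

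A repair closer to your idea: choose ${\sf H}(\Phi)$ with nonempty interior and $\delta=\infty$. Suppose $\Lambda'\in\HomCSL(\mc Cm,\Phi(\mc Cn))$ were surjective with $m<n$. Lifting generators gives $\Psi\in\HomCSL(\mc Cn,\mc Cm)$ with $\Lambda'\circ\Psi=\Phi$, so ${\sf H}(\Phi)\subseteq{\sf H}(\Psi)$ by \Cref{S30}. But $\delta({\sf H}(\Psi))\le m<n$ by \Cref{S24}, so ${\sf H}(\Psi)$ has empty interior, a contradiction.

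Finally, for $(\infty,\infty)$ you still owe $\Gdim\mc CA=\infty$. This is immediate, because finitely many elements of $\mc CA$ lie in some proper subalgebra $\mc CA_1$ with $A_1\subseteq A$ finite.
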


\noindent
The inclusion ``$\subseteq$'' is a consequence of our previous results. 

\begin{proof}[Proof of \Cref{S51}, ``\,$\subseteq$'']
	Let $\bb X\in\mc W$.
	Set $n\DE\Gdim\bb X$ and $k\DE\Edim\bb X$ where both numbers are elements of $(\bb N\setminus\{0\})\cup\{\infty\}$.
	By \cref{S29} we have $n=1$ if and only if $k=1$.

	Assume that $n=2$ (and hence $k\geq 2$). 
	Choose $\Omega\neq\emptyset$ such that $\bb X$ embeds into $\bb R^\Omega$, and choose a surjective map 
	$\Lambda\in\HomCSL(\mc C2,\bb X)$. Then ${\sf H}(\Lambda)\in H(2,\Omega)$. By \Cref{S2}(iii),(iv) and \Cref{S55}(ii)
	we have $H(2,\Omega)\subseteq H(2,2)=\mc H(2,2)$. Now \Cref{S28} implies that $k\leq 2$. Together with
	what we already said above, therefore, $k=2$. 

	Assume that $n\geq 3$. If $k=\infty$ there is nothing to prove, hence assume that $k<\infty$. 
	Choose a surjective homomorphism $\Lambda\in\HomCSL(\mc Cn,\bb X)$. By \Cref{S28} we have ${\sf H}(\Lambda)\in\mc H(n,k)$, 
	and by \Cref{S67} it holds that $\mc H(n,k)\subseteq\mc H(n,n)$. Another application of \Cref{S28} yields that $k\leq n$. 
\end{proof}

\noindent
For the proof of the reverse inclusion we give a series of examples. First of all note that the case ``$k=n=1$'' is trivial 
since $\Gdim\mc C1=\Edim\mc C1=1$.
Next we give examples that settle the case ``$\Edim\bb X=2$''.

\begin{Example}
\label{S100}
	Denote again by $\zeta_n^k$ the $n$-th roots of unity, cf.\ \cref{S99}. 
	Let $n\geq 2$ be given and consider the set 
	\[
		X\DE\Conv\Big(\big\{\zeta_{4(n-1)}^l\DSb 2(n-1)\leq l\leq 3(n-1)\big\}\cup\{0\}\Big)\subseteq\bb R^2.
	\]
	\begin{center}
	\begin{tikzpicture}[x=1pt,y=1pt,scale=1.5,font=\fontsize{12}{12}]
		\draw[->] (0,-50)--(0,10);
		\draw[->] (-50,0)--(10,0);
		\draw[pattern=dots,pattern color=green,thick] 
			(0,0)--({40*cos(180)},{40*sin(180)})--({40*cos(202.5)},{40*sin(202.5)})--({40*cos(225)},{40*sin(225)})
			--({40*cos(247.5)},{40*sin(247.5)})--({40*cos(270)},{40*sin(270)})--(0,0)
			;
		\draw[fill,red] (0,0) circle [radius=1];
		\draw[fill,red] ({40*cos(180)},{40*sin(180)}) circle [radius=1];
		\draw[fill,red] ({40*cos(202.5)},{40*sin(202.5)}) circle [radius=1];
		\draw[fill,red] ({40*cos(225)},{40*sin(225)}) circle [radius=1];
		\draw[fill,red] ({40*cos(247.5)},{40*sin(247.5)}) circle [radius=1];
		\draw[fill,red] ({40*cos(270)},{40*sin(270)}) circle [radius=1];
		\draw (20,0) node {$X$};
	\end{tikzpicture}
	\end{center}
	Remembering \Cref{S63} we see that $X$ is a convex semilattice with the operations inherited from $\bb R^2$ 
	(and we write $\bb X$ for this algebra).

	We show that 
	\[
		M\DE\big\{\zeta_{4(n-1)}^l\DSB 2(n-1)\leq l\leq 3(n-1)\big\}
	\]
	is the smallest subset of $X$ that generates $\bb X$. First, observe that
	\[
		0=\zeta_{4(n-1)}^{2(n-1)}\oplus\zeta_{4(n-1)}^{3(n-1)},
	\]
	and hence $M$ is a generating set. 
	Assume we have a generating set $\widetilde M$, and let $l\in\{2(n-1),\ldots,3(n-1)\}$. Choose $x_1,\ldots,x_N\in\Conv\widetilde M$
	such that $\zeta_{4(n-1)}^l=x_1\oplus\cdots,\oplus x_N$. Denote $Q\DE\{\binom\alpha\beta\in\bb R^2\DS\alpha,\beta\leq 0\}$,
	then $x_j\in\zeta_{4(n-1)}^l+Q$ for all $j$. We have $X\cap\zeta_{4(n-1)}^l+Q=\{\zeta_{4(n-1)}^l\}$, and hence 
	$\zeta_{4(n-1)}^l\in\Conv\widetilde M$. Since $\zeta_{4(n-1)}^l$ is an extremal point of $X$, it follows that 
	$\zeta_{4(n-1)}^l\in\widetilde M$. 

	We conclude from the above that $\Gdim\bb X=n$. Let $\tau\DF\mc C2\to\bb R^2$ be the embedding from \Cref{S72}(i), 
	\Cref{S68}. For an appropriate translation $T_a$ and rescaling $M_\lambda$ we have 
	$(T_a\circ M_\lambda)(X)\subseteq\tau(\mc C2)$. Since $T_a$ and $M_\lambda$ are automorphisms of the convex semilattice 
	$\bb R^2$, we have the injective map $\tau^{-1}\circ T_a\circ M_\lambda\in\HomCSL(\bb X,\mc C2)$.

	Now consider the quarter disk 
	\begin{equation}
	\label{S37}
		X\DE\Big\{\binom\alpha\beta\in\bb R^2\DSB \alpha,\beta\leq 0\wedge \alpha^2+\beta^2\leq 1\Big\}
	\end{equation}
	endowed with the operations inherited from $\bb R^2$. The same arguments as above show that 
	\[
		M\DE\Big\{\binom\alpha\beta\in\bb R^2\DSB \alpha,\beta\leq 0\wedge \alpha^2+\beta^2=1\Big\}
	\]
	is the smallest generating set of $\bb X$, whence $\Gdim\bb X=\infty$, and that we have an embedding of $\bb X$ into $\mc C2$.
\end{Example}

\noindent
In order to settle the cases that ``$3\leq\Edim\bb X<\infty$'' we use an example of a different kind. It is based on the following lemma
which is of interest on its own right.

\begin{lemma}
\label{S101}
	Let $d\in\bb N\setminus\{0\}$ and $K\subseteq\bb R^d$ be a compact convex and nonempty subset. Denote by $E(K)$ the set of
	extremal points of $K$, and set
	\[
		E\DE\big\{\{x\}\DSb x\in E(K)\big\}.
	\]
	\begin{Enumerate}
	\item If $M\subseteq\Moewe[14][8][8.33]K$ is a subset that generates $\Moewe[14][8][8.33]K$ as a convex semilattice, then
		$E\subseteq M$.
	\item If $E(K)$ is finite, then $E$ is a generating set for $\Moewe[14][8][8.33]K$.
	\end{Enumerate}
\end{lemma}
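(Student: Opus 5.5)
For item (i) I would use the explicit description of generated subalgebras from \Cref{S96}. Since $M$ generates the convex semilattice on the set of nonempty compact convex subsets of $K$, every element of this set can be written as $B_1\oplus\cdots\oplus B_N$ with $B_1,\ldots,B_N\in\Conv M$. Here $\Conv$ is taken with respect to the Minkowski convex operations $+_p$, and $\oplus$ is the convex hull of the union.

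Now fix an extremal point $x\in E(K)$ and write $\{x\}=B_1\oplus\cdots\oplus B_N$. Since $\oplus$ is the convex hull of the union, each $B_i\subseteq\{x\}$, so $B_i=\{x\}$ for every $i$. Hence $\{x\}\in\Conv M$, that is,
\[
	\{x\}=\sum_{i=1}^m p_iA_i
\]
with $A_i\in M$, $p_i>0$ and $\sum p_i=1$ (a Minkowski combination). Every point $a=\sum_i p_ia_i$ with $a_i\in A_i$ then equals $x$. Since each $A_i\subseteq K$, $x$ is extremal in $K$, and a convex combination with positive weights of points of $K$ equals an extremal point only if all of them coincide with it, we get $a_i=x$ for all choices. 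Thus $A_i=\{x\}$, and so $\{x\}\in M$. The one point to verify carefully is the statement about extremal points and finite convex combinations. It follows by induction from the definition of an extremal point, peeling off one summand at a time with $p_1x_1+(1-p_1)y$.

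For item (ii), write $E(K)=\{x_1,\ldots,x_r\}$. Let $L\subseteq K$ be a nonempty compact convex set, which is an arbitrary element of the algebra. I would first show that the singletons $\{y\}$ with $y\in K$ lie in the generated subalgebra. By Krein–Milman (Minkowski's theorem in finite dimensions), $K=\Conv E(K)$, so $y=\sum_i p_ix_i$. Then $\{y\}=\sum_i p_i\{x_i\}$ is a convex combination of elements of $E$.

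The main obstacle is that a general $L$ need not be a polytope, so it is not a finite $\oplus$ of singletons. Each element of the generated subalgebra is a finite $\oplus$ of Minkowski combinations $\sum p_i\{x_i\}$, and those are singletons, so every element of the subalgebra generated by $E$ is a polytope. In fact the paper's \Cref{S64} defines the algebra as $\{\Conv M: M\text{ finite}\}$, that is, as consisting only of polytopes. So in the paper's convention $L=\Conv\{y_1,\ldots,y_s\}$ with finitely many $y_j$, and then
\[
	L=\{y_1\}\oplus\cdots\oplus\{y_s\}
\]
lies in the generated subalgebra, which finishes (ii). I would state explicitly that the notation refers to the algebra of \Cref{S64}, so that the elements are exactly the polytopes inside $K$. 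Under that reading, item (i) is unaffected: its argument used only the inclusion $B_i\subseteq\{x\}$ and extremality.
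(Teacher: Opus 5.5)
Your proposal is correct and follows the paper's proof essentially step by step. In (i) you use $A_i\subseteq A_1\oplus\cdots\oplus A_N$ to get $\{x\}\in\Conv M$, and then apply extremality of $x$ to the Minkowski combination to force every summand to equal $\{x\}$. In (ii) you write an arbitrary element as $\Conv\{y_1,\ldots,y_s\}=\{y_1\}\oplus\cdots\oplus\{y_s\}$, where each $\{y_j\}$ is a convex combination of the singletons in $E$ because $K=\Conv E(K)$.

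Your remark that (ii) depends on the convention of Example~\ref{S64} is accurate: elements are convex hulls of finite sets, not arbitrary compact convex sets. Only your opening description of the algebra in (i), as all nonempty compact convex subsets of $K$, should be replaced by that convention; your argument for (i) does not need the stronger reading anyway.
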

\begin{proof}
	Let $x\in E(K)$. Choose $A_1,\ldots,A_N\in\Conv M$ such that $\{x\}=A_1\oplus\ldots\oplus A_N$. Since 
	$A_1\oplus\ldots\oplus A_N\supseteq\bigcup_{j=1}^NA_j$, it follows that $A_i=\{x\}$ for all $i$. Thus $\{x\}\in\Conv M$. 
	Choose $B_1,\ldots,B_l\in M$ and $p_1,\ldots,p_l>0$ with $\sum_{j=1}^lp_j=1$ such that $\{x\}=\sum_{j=1}^lp_jB_j$. 
	Let $b_j\in B_j$, then $\sum_{j=1}^lp_jb_j=x$, and since $x\in E(K)$ it follows that $b_j=x$ for all $j$. We conclude that 
	$B_j=\{x\}$ for all $j$, and hence that $\{x\}\in M$.

	Assume that $E(K)$ is finite, then $K=\Conv E(K)$. Write $E(K)=\{x_1,\ldots,x_l\}$. Let $A\in\Moewe[14][8][8.33]K$, and write 
	$A=\Conv\{a_1,\ldots,a_N\}$ with some $a_i\in K$. Each $a_i$ can be written as a convex combination 
	$a_i=\sum_{j=1}^lp_{ij}x_j$, and we obtain 
	\[
		A=\Big(\sum_{j=1}^lp_{1j}\{x_j\}\Big)\oplus\cdots\oplus\Big(\sum_{j=1}^lp_{Nj}\{x_j\}\Big).
	\]
	Thus $A$ belongs to the subalgebra generated by $E$.
\end{proof}

\begin{Example}
\label{S102}
	Let $k\in\bb N$, $k\geq 3$, and $n\in\bb N\cup\{\infty\}$, $n\geq k$. There exists a compact convex subset $K$ of $\mc Dk$ with 
	nonempty relative interiour and $|E(K)|=n$. For $n=\infty$ use for example some closed ball in $\mc Dk$ with positive radius.
	If $n<\infty$ use induction: start with a smaller copy of $\mc Dk$ which lies in the interiour of $\mc Dk$ and then add corner
	points by shifting midpoints of faces. 

	\Cref{S98} and \Cref{S101} imply that $\Edim\Moewe[14][8][8.33]K=k$ and $\Gdim\Moewe[14][8][8.33]K=n$.
	\phantom{xxx}
\end{Example}

\noindent
It remains to construct $\bb X\in\mc W$ with $\Edim\bb X=\infty$ and $\Gdim\bb X=n$ where $n$ is any given number larger than $2$. 
The case that $n=3$ follows easily from our previous results.

\begin{Example}
\label{S105}
	We define vectors $y_j\in\bb R^3$, $j\in\bb N$, as 
	\[
		y_0\DE\begin{pmatrix} 1 \\ 0 \\ 0\end{pmatrix},\qquad
		y_j\DE\begin{pmatrix} 1-\frac 1j \\ \frac 1j \\ 0\end{pmatrix}\quad\text{for }j\geq 1,
	\]
	and set 
	\[
		H\DE\bigcup_{j\in\bb N}\Cone\{y_j\}+\Span\{\E[3]\}.
	\]
	Since $\lim_{j\to\infty}y_j=y_0$ and the third component of each $y_j$ vanishes, one can easily check that $H$ is closed. 
	By \Cref{S12} we have $H\in H(3,\bb N)$. For each two different indices $j,l\in\bb N$ we have 
	$\Conv\{y_j,y_l\}\nsubseteq H$, and hence $H$ cannot be represented as a finite union of cones. In particular, 
	$\delta(H)=\infty$.

	Choose $\Lambda\in\HomCSL(\mc C3,\bb R^{\bb N})$ such that $H=H(\Lambda)$, and set $\bb X\DE\Lambda(\mc C3)$. Then 
	$\bb X\in\mc W$, $\Gdim\bb X\leq 3$, and $\Edim\bb X=\infty$ by \Cref{S35}. By the already known inclusion ``$\subseteq$'' in
	\Cref{S51} we must have $\Gdim\bb X=3$. 
\end{Example}

\noindent
The case that $n=\infty$ follows immediately.

\begin{Example}
\label{S47}
	Let $\bb X_1$ be the convex semilattice constructed in \Cref{S105}, let $\bb X_2$ be the
	quarter disk \cref{S37} used in \Cref{S100}, and set $\bb X\DE\bb X_1\times\bb X_2$. Since $\HomCSL(\bb X_1,\bb X)$ contains an
	injective map we have $\Edim\bb X=\infty$, and since $\HomCSL(\bb X,\bb X_2)$ contains a surjective map we have 
	$\Gdim\bb X=\infty$.
\end{Example}

\noindent
In order to obtain examples for $3<n<\infty$ we have to argue a little more accurately. 
The example is based on the following construction.

\begin{lemma}
\label{S106}
	Let $\Omega\neq\emptyset$ and let $\bb X$ be a subalgebra of the convex semilattice $\bb R^\Omega$ with
	$X\subseteq[0,\infty)^\Omega$. Consider the disjoint union $\tilde\Omega\DE\Omega\dot\cup\{\star\}$, and let 
	$\iota\DF\bb R^\Omega\to\bb R^{\tilde\Omega}$ be the map defined by 
	\[
		(\iota f)(\omega)\DE
		\begin{cases}
			f(\omega) \CAS \omega\in\Omega
			\\
			0 \CAS \omega=\star
		\end{cases}
	\]
	Then $\iota$ is an injective homomorphism of convex semilattices. 
	Let $\tilde{\bb X}$ be the subalgebra of $\bb R^{\tilde\Omega}$ that is generated by $\iota(X)\cup\{e_\star\}$. 
	\begin{Enumerate}
	\item Let $\tilde M\subseteq\tilde X$ be a generating set for $\tilde{\bb X}$. Then $e_\star\in\tilde M$ and 
		$\iota^{-1}(\tilde M\cap\iota(X))$ is a generating set for $\bb X$.
	\item Let $M\subseteq X$ be a generating set for $\bb X$. Then $\iota(M)\cup\{e_\star\}$ is a generating set for 
		$\tilde{\bb X}$.
	\end{Enumerate}
	Together these items imply that $\Gdim\tilde{\bb X}=\Gdim\bb X+1$.
\end{lemma}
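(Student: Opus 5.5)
The plan is to first describe $\tilde X$ explicitly and then read both items off from this description. Since $X\subseteq[0,\infty)^\Omega$, every element of $\iota(X)$ vanishes at $\star$ and is nonnegative elsewhere, while $e_\star$ is $1$ at $\star$ and $0$ on $\Omega$. Hence for $x\in\iota(X)$ we have $x\oplus e_\star=x+e_\star$ coordinatewise, and $\iota(X)\cup\{e_\star\}$ lies in $[0,\infty)^{\tilde\Omega}$.

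By \Cref{S96}, every element of $\tilde X$ is a finite $\oplus$ of convex combinations $p\,e_\star+(1-p)\iota(x)$ with $x\in X$, using that $\iota(X)$ is convex. Evaluating at $\star$ gives $p$ and on $\Omega$ gives $(1-p)x$. So, writing $g(\star)$ for the $\star$-coordinate, each $g\in\tilde X$ satisfies $g(\star)\in[0,1]$ and $g|_\Omega=\max_i(1-p_i)x_i$. I then note the key fact that $g(\star)=0$ forces every $p_i=0$, so $g|_\Omega\in X$. Hence
\[
	\tilde X\cap\{g\DS g(\star)=0\}=\iota(X),
\]
and I expect no difficulty here.

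For item (i), I first show $e_\star\in\tilde M$. Write $e_\star=g_1\oplus\cdots\oplus g_N$ with $g_i\in\Conv\tilde M$. Each $g_i$ is $\geq0$ on $\Omega$, since all elements of $\tilde X$ are, and the maximum is $0$ there, so $g_i|_\Omega=0$. Moreover $g_i(\star)\le1$ with maximum $1$, so some $g_i=e_\star$. Now $e_\star$ is a convex combination of elements $m_j\in\tilde M$. Each $m_j$ has $m_j(\star)\le1$, so all $m_j(\star)=1$. Then $m_j=p\,e_\star+\ldots$ forces all $p$'s to be $1$ in the representation of $m_j$ above, so $m_j|_\Omega=0$ and $m_j=e_\star$.

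Next, take $x\in X$ and write $\iota(x)=g_1\oplus\cdots\oplus g_N$ with $g_i\in\Conv\tilde M$. All $g_i(\star)=0$, since they are $\geq0$ with maximum $0$. Each $g_i=\sum q_jm_j$ with $q_j>0$ then has $m_j(\star)=0$, so $m_j\in\tilde M\cap\iota(X)$. Applying $\iota^{-1}$, which is a homomorphism on $\iota(X)$, exhibits $x$ in the subalgebra generated by $\iota^{-1}(\tilde M\cap\iota(X))$. The main care needed is in this step, namely using nonnegativity to exclude mixing of coordinates.

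For item (ii), the subalgebra generated by $\iota(M)\cup\{e_\star\}$ contains $\iota$ of the subalgebra generated by $M$, because $\iota$ is a homomorphism; that is, it contains $\iota(X)$ and $e_\star$, hence all of $\tilde X$.

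Finally, the dimension formula follows by counting. Item (ii) with a minimal $M$ gives $\Gdim\tilde{\bb X}\le\Gdim\bb X+1$; here I use that $\Gdim$ equals the minimal cardinality of a generating set, since surjections from $\mc Cn$ correspond to $n$-element generating sets. Item (i) gives $|\tilde M|\ge1+|\iota^{-1}(\tilde M\cap\iota(X))|\ge1+\Gdim\bb X$, because $e_\star\notin\iota(X)$. The infinite cases are handled the same way.
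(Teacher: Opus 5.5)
Your proof follows the same route as the paper's. Both proofs bound the coordinates of elements of $\tilde X$ (nonnegative on $\Omega$, in $[0,1]$ at $\star$) and then read off the $\star$- and $\Omega$-coordinates of $\oplus$-decompositions and convex combinations. Your explicit description of $\tilde X$ via \Cref{S96} adds something useful. It proves that an element of $\tilde X$ vanishing at $\star$ lies in $\iota(X)$, which the paper uses in the second half of (i) without comment.

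There is, however, a wrong inference in your proof that $e_\star\in\tilde M$. From $m_j(\star)=1$ you conclude that ``all $p$'s are $1$'' in the representation $m_j=\bigoplus_i\big(p_ie_\star+(1-p_i)\iota(x_i)\big)$. But $m_j(\star)=\max_i p_i$, so only one $p_i$ has to equal $1$. For instance, for any $x\in X$ with $x\neq 0$, the element $e_\star\oplus\iota(x)\in\tilde X$ takes the value $1$ at $\star$ but is not $e_\star$. So the $\star$-coordinate alone cannot give $m_j=e_\star$.

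The missing step uses the $\Omega$-coordinates of the convex combination. From $e_\star=\sum_j q_jm_j$ with all $q_j>0$ you get $\sum_j q_j\,m_j|_\Omega=0$. Each $m_j|_\Omega\geq 0$, as you already observed for all elements of $\tilde X$, so every $m_j|_\Omega=0$. Together with $m_j(\star)=1$ this gives $m_j=e_\star$; this is exactly how the paper argues. With this repair, the rest of your proof is correct: the second half of (i), item (ii), and the counting argument for $\Gdim\tilde{\bb X}=\Gdim\bb X+1$, including the infinite cases.
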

\begin{proof}
	The set $[0,\infty)^\Omega\times[0,1]$ is a subalgebra of $\bb R^{\tilde\Omega}$ that contains $\iota(X)\cup\{e_\star\}$.
	Hence, it also contains $\tilde X$. Let $\tilde M$ be a generating set for $\tilde{\bb X}$. 

	Choose $x_1,\ldots,x_N\in\Conv\tilde M$ such that $e_\star=x_1\oplus\cdots\oplus x_N$. Then $x_i(\omega)=0$ for all $i$ and all
	$\omega\in\Omega$, and there exists at least one $i$ such that $x_i(\star)=1$. For such $i$ we have $x_i=e_\star$, and hence 
	$e_\star\in\Conv\tilde M$. Let $y_1,\ldots,y_l\in\tilde M$ and $p_1,\ldots,p_l>0$ such that $e_\star=\sum_{j=1}^lp_jy_j$. Then
	$y_j(\omega)=0$ for all $j$ and all $\omega\in\Omega$, and $y_j(\star)=1$ for all $j$. This means that $y_j=e_\star$ for all
	$j$, and hence $e_\star\in M$. 

	Let $x\in X$ and repeat the argument.
	Choose $x_1,\ldots,x_N\in\Conv\tilde M$ such that $\iota(x)=x_1\oplus\ldots\oplus x_N$. Then $x_i(\star)=0$ for all $i$. 
	Let $y_{i,1},\ldots,y_{i,l_i}\in\tilde M$ and $p_{i,1},\ldots,p_{i,l_i}>0$ such that $x_i=\sum_{j=1}^{l_i}p_{i,j}y_{i,j}$. 
	Then $y_{i,j}(\star)=0$ for all $i,j$, and we see that $y_{i,j}\in\iota(X)$. Thus $\iota(X)$ is generated by 
	$\tilde M\cap\iota(X)$.

	Item (ii) is clear.
\end{proof}

\begin{Example}
\label{S107}
	Let $n\in\bb N$, $n>3$, be given. we start with the convex semilattice $\bb X$ constructed in \Cref{S105}. By performing an
	appropriate translation, for example $T_a$ where $a\DE\Lambda(\{e_1\})\oplus\Lambda(\{e_2\})\oplus\Lambda(\{e_3\})$, we can 
	achieve that $T_a(X)\subseteq[0,\infty)^{\bb N}$. Now we repeatedly apply \Cref{S106}, in fact, $n-3$ times. This leads to a 
	convex semilattice $\bb Y$ with $T_a(\bb X)\subseteq\bb Y$ and $\Gdim\bb Y=n$. Because of the inclusion we have 
	\[
		\Edim\bb Y\geq\Edim T_a(\bb X)=\Edim\bb X=\infty.
	\]
\end{Example}

\noindent
The proof of \Cref{S51} is complete.

\begin{corollary}
\label{S57}
	Let $n\in\bb N\setminus\{0\}$ and assume we have a convex semilattice $\bb X$ that is generated by $n$ of its elements and that
	$\bb X$ can be embedded into some free convex semilattice. Then $\bb X$ can be embedded into $\mc Cn$.
\end{corollary}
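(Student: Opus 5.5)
The plan is to derive this directly from \Cref{S51} (its inclusion ``$\subseteq$'', already proved), after checking that $\bb X$ lies in the class $\mc W$ and that its two dimensions are finite.

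First, since $\bb X$ embeds into some free convex semilattice $\mc CA$, and $\mc CA$ embeds into a power $\bb R^\Omega$ via the support function representation $\rho_A$ (\Cref{S74}), we get $\bb X\in\mc W$. Next, $\bb X$ is generated by $n$ elements $x_1,\ldots,x_n$. The homomorphism $\mc Cn\to\bb X$ with $\{e_j\}\mapsto x_j$ is surjective, because its image is a subalgebra containing the generators. Hence $\Gdim\bb X\leq n$.

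If $|X|=1$, the claim is trivial, since $\bb X\cong\mc C1$ embeds into every $\mc Cn$. Otherwise, \cref{S29} gives $\Gdim\bb X\geq 2$ and $\Edim\bb X\geq 2$. The main point is that $\Edim\bb X$ must be finite, and this is where care is needed. An embedding into $\mc CA$ with $A$ infinite is not a priori an embedding into some $\mc Ck$. However, the images of $x_1,\ldots,x_n$ are finitely many elements of $\mc CA$. Each is a finitely generated convex hull of finitely supported vectors, so together they involve only a finite subset $B\subseteq A$. The subalgebra generated by these images therefore lies in $\mc CB\cong\mc C|B|$, which is a subalgebra of $\mc CA$. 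Consequently $\Edim\bb X\leq|B|<\infty$.

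Finally, \Cref{S51} ``$\subseteq$'' says that any pair $(\Gdim\bb X,\Edim\bb X)$ other than $(1,1)$ with finite second entry satisfies $2\leq\Edim\bb X\leq\Gdim\bb X$. Combining, $\Edim\bb X\leq\Gdim\bb X\leq n$. So there is an injective homomorphism $\bb X\to\mc Ck$ with $k\leq n$, and composing with the embedding $\mc Ck\hookrightarrow\mc Cn$ (coordinate inclusion $\mc Dk\hookrightarrow\mc Dn$) finishes the proof. The only step needing attention is the reduction from an arbitrary $A$ to a finite basis. Everything else is bookkeeping with \Cref{S51}, which already contains the substantive content, namely \Cref{S67} and \Cref{S28}.
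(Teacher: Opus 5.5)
Your proposal is correct and is essentially the paper's argument. You pass from $\mc CA$ to $\mc CA_1$ for a finite $A_1\subseteq A$ (possible because $\bb X$ is finitely generated) to get $\Edim\bb X<\infty$, and then invoke the inclusion ``$\subseteq$'' of \Cref{S51} to conclude $\Edim\bb X\leq\Gdim\bb X\leq n$. Your version only spells out details the paper leaves implicit, such as $\bb X\in\mc W$, the trivial case $|X|=1$, and writing $\Gdim\bb X\leq n$ rather than $=n$.
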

\begin{proof}
	Assume that $\bb X$ can be embedded into $\mc CA$. Since $\mc CA=\Moewe[21][-2][8.33]{\mc DA}$ and $\bb X$ is finitely generated,
	we find a finite subset $A_1$ of $A$ such that the image of $\bb X$ is contained in $\mc CA_1$. Thus $\Edim\bb X<\infty$, and
	by \Cref{S51} therefore $\Edim\bb X\leq\Gdim\bb X=n$.
\end{proof}

\noindent
We give another example that rounds off \Cref{S51}.

\begin{Example}
\label{S104}
	Consider $X\DE\{0,1\}$ with the operations 
	\[
		x\oplus y\DE\max\{x,y\},\qquad x+_py\DE\min\{x,y\}\quad\text{for }p\in(0,1).
	\]
	Then $X$ becomes a convex semilattice $\bb X$, and clearly $\Gdim\bb X=2$. Every convex subset of $\bb R^\Omega$ contains
	either exactly one or infinitely many points. Hence, there cannot exist an injective homomorphism of $\bb X$ into some 
	$\bb R^\Omega$, i.e.\ $\bb X\in\mc W$.
\end{Example}

\subsection{The embedding dimension for convex algebras}

One can of course define embedding and generator dimension in any equational class. 
We briefly discuss the embedding dimension in the equational class of 
convex algebras because it is interesting to compare this with \Cref{S40,S48}.

\begin{proposition}
\label{S49}
	Let $n,m\in\bb N\setminus\{0\}$. 
	\begin{Enumerate}
	\item The set $\HomCA(\mc Dn,\mc Dk)$ contains an injective map if and only if $k\geq n$.
	\item The set $\HomCA(\mc Dn\times\mc Dm,\mc Dk)$ contains an injective map if and only if $k\geq n+m-1$.
	\end{Enumerate}
\end{proposition}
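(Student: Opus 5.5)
Both items reduce to affine geometry. The plan is to show that every convex algebra homomorphism between the algebras involved is the restriction of an affine map, and then to compare affine dimensions.

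\emph{Step 1 (homomorphisms are affine).} For item (i) this is \Cref{S65}(i): every $\varphi\in\HomCA(\mc Dn,\mc Dk)\subseteq\HomCA(\mc Dn,\bb R^k)$ is the restriction of a linear map $\tilde\varphi\DF\bb R^n\to\bb R^k$.

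For item (ii) I regard $\mc Dn\times\mc Dm$ as the convex subset $Q\DE\mc Dn\times\mc Dm$ of $\bb R^n\times\bb R^m$; its operations are the componentwise convex combinations. Its affine hull is $\{(x,y)\DS\sum_ix_i=1,\ \sum_jy_j=1\}$, which has dimension $(n-1)+(m-1)=n+m-2$, and $Q$ has nonempty interior relative to this hull. A homomorphism $\varphi$ preserves all binary, hence all finite, convex combinations. A map on a convex set with nonempty relative interior that preserves convex combinations extends uniquely to an affine map $\tilde\varphi$ on the affine hull. To see this, fix affinely independent points $q_0,\ldots,q_{n+m-2}\in Q$ spanning the hull, define $\tilde\varphi$ through barycentric coordinates, and check that $\tilde\varphi$ agrees with $\varphi$ on $\Conv\{q_0,\ldots,q_{n+m-2}\}$. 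Then extend this agreement to all of $Q$: for $q\in Q$, some point of the interior of that simplex is a convex combination of $q$ with another simplex point, and I use that combination. The same argument handles $\mc Dn$ directly, with affine hull of dimension $n-1$.

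\emph{Step 2 (injectivity forces the dimension bound).} Suppose $\varphi$ is injective and $\tilde\varphi$ is its affine extension. Then $\tilde\varphi$ is injective on the whole affine hull. Indeed, if $\tilde\varphi(u)=\tilde\varphi(u')$ with $u\neq u'$, then $\tilde\varphi$ is constant along the direction $u-u'$. Moving a relative interior point of $Q$ slightly in that direction produces two distinct points of $Q$ with the same image, a contradiction. Hence the image has affine dimension equal to that of the hull, namely $n-1$, respectively $n+m-2$. The image lies in $\mc Dk$, whose affine hull has dimension $k-1$. This gives $k\geq n$, respectively $k\geq n+m-1$.

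\emph{Step 3 (constructions).} For (i) with $k\geq n$, the inclusion $\mc Dn\hookrightarrow\mc Dk$, $e_j\mapsto e_j$, is injective. For (ii) it suffices to treat $k=n+m-1$ and then compose with the map from (i). I define
\[
	\psi(x,y)\DE\tfrac12\big(x_1,\ldots,x_{n-1},\,x_n+y_1,\,y_2,\ldots,y_m\big)\in\bb R^{n+m-1}.
\]
This map is affine, so it respects componentwise convex combinations and is a convex algebra homomorphism. Its entries are nonnegative and sum to $\tfrac12(1+1)=1$, so $\psi$ maps into $\mc D(n+m-1)$. It is injective: the first $n-1$ entries determine $x_1,\ldots,x_{n-1}$, hence $x_n=1-\sum_{i<n}x_i$. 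The merged entry then gives $y_1$, and the last entries give $y_2,\ldots,y_m$.

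The main point needing care is Step 1 for the product. There one must argue that binary convex-combination preservation on $Q$ yields a genuinely affine map, and that injectivity on $Q$ transfers to the affine hull. Both rest on $Q$ having nonempty relative interior.
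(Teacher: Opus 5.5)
Your proof is correct and follows essentially the paper's route. Both arguments extend the homomorphism to an affine map (in the paper, a linear map after composing with $\kappa_k$), use that injectivity on a convex set with nonempty relative interior forces injectivity of the extension, and then compare dimensions. The embedding $\mc Dn\times\mc Dm\to\mc D(n+m-1)$ is also the same ``halve and concatenate, merging one coordinate'' map.

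The only difference is in the lower bound for (ii): the paper extends $\varphi$ only from the full-dimensional simplex $Y\subseteq\mc Dn\times\mc Dm$. So your step of propagating the agreement from the simplex to all of $Q$ is not needed, though it is harmless. Your affine formulation is in fact the appropriate one, since the map extending $\kappa_k\circ\varphi|_Y$ is in general affine rather than linear.
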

\begin{proof}
	It is clear that $\mc Dn$ embeds in $\mc Dk$ whenever $k\geq n$. Assume we have $k\in\bb N\setminus\{0\}$ and an 
	injective map $\varphi\in\HomCA(\mc Dn,\mc Dk)$. Let $\kappa_k\DF\mc Dk\to\bb R^{k-1}$ be as in 
	\Cref{S65}(ii), and let $\Phi\DF\bb R^n\to\bb R^{k-1}$ be the linear extension of $\kappa_k\circ\varphi$.
	\[
		\begin{tikzcd}[column sep=large]
			\mc Dn \arrow[r,hookrightarrow,"\varphi"] \arrow[d,swap,"\subseteq"]
			& \mc Dk \arrow[d,hookrightarrow,"\kappa_k"]
			\\
			\bb R^n \arrow[r,dashed,swap,"\Phi"] & \bb R^{k-1}
		\end{tikzcd}
	\]
	Since $\Phi|_{\mc Dn}$ is injective, we have $\ker\Phi\cap\{\E[n]\}^\perp=\{0\}$ and hence 
	$\dim\ker\Phi\leq 1$. It follows that 
	\[
		k-1\geq\dim\ran\Phi=n-\dim\ker\Phi\geq n-1,
	\]
	i.e., $k\geq n$.

	We come to the proof of item (ii). If $n=1$ or $m=1$, the assertion follows from (i). 
	Assume throughout the following that $n,m\geq 2$. 

	We construct an embedding of $\mc Dn\times\mc Dm$ into $\mc D(n+m-1)$. The map
	\[
		\Xi\DF\left\{
		\begin{array}{rcl}
			\bb R^{n-1}\times\bb R^{m-1} & \to & \bb R^{n+m-2}
			\\
			\big((\alpha_j)_{j=1}^{n-1},(\beta_i)_{i=1}^{m-1}\big) & \mapsto &
			\big(\alpha_1,\ldots,\alpha_{n-1},\beta_1,\ldots,\beta_{m-1}\big)^T
		\end{array}
		\right.
	\]
	and the rescaling $M_{\frac 12}$ are linear bijections.
	The image of $M_{\frac 12}\circ\Xi\circ(\kappa_n\times\kappa_m)$ is contained in the image of
	$\kappa_{n+m-1}$, and hence there exists a map $\varphi$ with 
	\[
		\begin{tikzcd}[column sep=large]
			\mc Dn\times\mc Dm \arrow[rr,dashed,"\varphi"] 
				\arrow[d,hookrightarrow,swap,"\kappa_n\times\kappa_m"]
			&& \mc D(n+m-1) \arrow[d,hookrightarrow,"\kappa_{n+m-1}"]
			\\
			\bb R^{n-1}\times\bb R^{m-1} \arrow{r}{\cong}[swap]{\Xi}
			& \bb R^{n+m-2} \arrow{r}{\cong}[swap]{M_{\frac 12}}
			& \bb R^{n+m-2}
		\end{tikzcd}
	\]
	Clearly, $\varphi$ is an injective homomorphism.

	Assume we have $k\in\bb N\setminus\{0\}$ and an injective map $\varphi\in\HomCA(\mc Dn\times\mc Dm,\mc Dk)$. Set
	\begin{multline*}
		Y\DE\Conv\Big(\big\{(e_i,e_m)\DS i\in\{1,\ldots,n\}\big\}
		\\
		\cup\big\{(e_n,e_j)\DS j\in\{1,\ldots,m\}\big\}\Big)\subseteq\mc Dn\times\mc Dm,
	\end{multline*}
	then 
	\begin{multline*}
		(\kappa_n\times\kappa_m)(Y)=
		\Conv\Big(\big\{(0,0)\big\}\cup\big\{(e_j,0)\DS j\in\{1,\ldots,n-1\}\big\}
		\\
		\cup\big\{(0,e_i)\DS i\in\{1,\ldots,m-1\}\big\}\Big)\subseteq\bb R^{n-1}\times\bb R^{m-1}.
	\end{multline*}
	Since $\{(e_j,0)\DS j\in\{1,\ldots,n-1\}\}\cup\{(0,e_i)\DS i\in\{1,\ldots,m-1\}\}$ is a basis 
	of $\bb R^{n-1}\times\bb R^{m-1}$, the set $(\kappa_n\times\kappa_m)(Y)$ contains a nonempty open subset of 
	$\bb R^{n-1}\times\bb R^{m-1}$ and there exists a linear map $\Phi\DF\bb R^{n-1}\times\bb R^{m-1}\to\bb R^{k-1}$ with 
	$\Phi\circ(\kappa_n\times\kappa_m)|_Y=\kappa_k\circ\varphi|_Y$.
	\[
		\begin{tikzcd}[column sep=large]
			\mc Dn\times\mc Dm\,\supseteq\mkern-117mu
			& Y\arrow[r,hookrightarrow,"\varphi|_Y"] 
				\arrow[d,hookrightarrow,swap,"(\kappa_n\times\kappa_m)|_Y"]
			& \mc Dk \arrow[d,hookrightarrow,"\kappa_k"]
			\\
			& \bb R^{n-1}\times\bb R^{m-1} \arrow[r,dashed,swap,"\Phi"] & \bb R^{k-1}
		\end{tikzcd}
	\]
	The restriction of $\Phi$ to $(\kappa_n\times\kappa_m)(Y)$ is injective, and it follows that $\Phi$ is 
	injective. Thus, 
	\[
		n+m-2=\dim\big(\bb R^{n-1}\times\bb R^{m-1}\big)\leq\dim\bb R^{k-1}=k-1.
	\]
	It follows that $n+m-1\leq k$.
\end{proof}


\printbibliography

{\footnotesize
\begin{flushleft}
	H.\,Woracek\\
	Institute for Analysis and Scientific Computing\\
	TU Wien\\
	Wiedner Hauptstra{\ss}e\ 8--10/101\\
	1040 Wien\\
	AUSTRIA\\
	email: \texttt{harald.woracek@tuwien.ac.at}\\[5mm]
\end{flushleft}
}

\end{document}